\documentclass[11pt]{amsart}

\usepackage[T1]{fontenc}
\usepackage[utf8]{inputenc}
\usepackage{lmodern}
\usepackage{adjustbox}
\usepackage[american]{babel}
\usepackage[babel, final]{microtype}
\usepackage{amssymb}
\usepackage[all]{xy}

\usepackage{mathtools}

\usepackage{ifpdf}
\usepackage{comment}
\usepackage{multirow}

\usepackage{csquotes}

\usepackage{booktabs}

\usepackage[pdftex, dvipsnames]{xcolor}
\usepackage{caption}
\usepackage{subcaption}
\usepackage{pinlabel}
\usepackage[pdftex,%
    a4paper,%
    includehead,%
    includefoot,%
    nomarginpar,%
    lmargin=1in,%
    rmargin=1in,%
    tmargin=1in,%
    bmargin=1in,%
]{geometry}
\usepackage[pdftex,%
	final,%
	colorlinks=true,%
	linkcolor={red!45!black},%
	citecolor=NavyBlue,%
	filecolor=NavyBlue,%
	menucolor=NavyBlue,%
	urlcolor={blue!45!black},%
	bookmarks=true,%
	bookmarksdepth=3,%
	bookmarksnumbered=true,%
	bookmarksopen=true,%
	bookmarksopenlevel=2,%
]{hyperref}
\usepackage{cleveref}
\hypersetup{
    pdftitle={Algorithms for Computing Invariants of Trisected Branched Covers},
    pdfauthor={Cahn, Matic, Pinzon-Caicedo, Ruppik},
    pdfsubject={Algorithms for Computing Invariants of Trisected Branched Covers},
    pdfkeywords={Knotted surfaces, branched covers, group bridge trisections, Reidemeister-Schreier algorithm}
}

\usepackage[
	backend=biber,
	style=alphabetic,
	sorting=nyt,
	giveninits=true,
	maxnames=10,
	url=false,
	backref=true
]{biblatex}
\DefineBibliographyStrings{english}{
	backrefpage={$\uparrow$},
	backrefpages={$\uparrow$}
}


\DeclareFieldFormat[article,incollection,inproceedings]{title}{#1}
\DeclareFieldFormat[unpublished]{title}{\textit{#1}}

\addbibresource{references.bib}


\newtheorem{algorithm}{Algorithm}

\newenvironment{nalign}{
    \begin{equation}
    \begin{aligned}
}{
    \end{aligned}
    \end{equation}
    \ignorespacesafterend
}

\newcommand{\pathelement}[1]{\widehat{#1}}
\newcommand{\loopelement}[1]{{#1}} 


\makeatletter
\def\@tocline#1#2#3#4#5#6#7{\relax
	\ifnum #1>\c@tocdepth 
	\else
	\par \addpenalty\@secpenalty\addvspace{#2}%
	\begingroup \hyphenpenalty\@M
	\@ifempty{#4}{%
		\@tempdima\csname r@tocindent\number#1\endcsname\relax
	}{%
		\@tempdima#4\relax
	}%
	\parindent\z@ \leftskip#3\relax \advance\leftskip\@tempdima\relax
	\rightskip\@pnumwidth plus4em \parfillskip-\@pnumwidth
	#5\leavevmode\hskip-\@tempdima
	\ifcase #1
	\or\or \hskip 1em \or \hskip 2em \else \hskip 3em \fi%
	#6\nobreak\relax
	\dotfill\hbox to\@pnumwidth{\@tocpagenum{#7}}\par
	\nobreak
	\endgroup
	\fi}
\makeatother


\usepackage{tikz,tikz-cd}
\usetikzlibrary{patterns,knots,arrows.meta,decorations.markings}
\usetikzlibrary{tikzmark}
\usetikzlibrary{decorations.pathmorphing}

\tikzset{>={Straight Barb[scale=0.85]}}
\tikzcdset{
  cells={font=\everymath\expandafter{\the\everymath\displaystyle}},
  arrow style=tikz,
  diagrams={>={Straight Barb[scale=0.85]}},
  every label/.append style = {font = \small}
}

\usepackage{tikzsymbols}

\usepackage{overpic}



\hyphenation{Thurs-ton}
\hyphenation{mo-no-poles}
\hyphenation{sur-ger-y}













\newcommand{\cs}{\mathop \#} 
\newcommand{\bcs}{\mathop \natural} 










\newcommand{\sphere}[1]{\mathbb{S}^{#1}}
\newcommand{\disk}[1]{\mathbb{D}^{#1}}




\newcommand{\ZZ}{\mathbb{Z}}


\newcommand{\Twist}{\tau}

\makeatletter
\g@addto@macro\@floatboxreset\centering
\makeatother

\allowdisplaybreaks[4]

\addto\extrasamerican{%
}

\newtheorem{proposition}{Proposition}[section]

\theoremstyle{definition}

\newtheorem{definition}{Definition}[section]
\newtheorem{example}{Example}[section]
\newtheorem{remark}{Remark}[section]
\newtheorem{question}{Question}[section]

\makeatletter
\let\c@conjecture=\c@theorem
\let\c@corollary=\c@theorem
\let\c@proposition=\c@theorem
\let\c@lemma=\c@theorem
\let\c@definition=\c@theorem
\let\c@example=\c@theorem
\let\c@remark=\c@theorem
\let\c@equation\c@theorem
\makeatother

\def\makeautorefname#1#2{\expandafter\def\csname#1autorefname\endcsname{#2}}
\makeautorefname{theorem}{Theorem}%
\makeautorefname{conjecture}{Conjecture}%
\makeautorefname{corollary}{Corollary}%
\makeautorefname{proposition}{Proposition}%
\makeautorefname{lemma}{Lemma}%
\makeautorefname{definition}{Definition}%
\makeautorefname{example}{Example}%
\makeautorefname{remark}{Remark}%
\makeautorefname{question}{Question}

\numberwithin{equation}{section}

\newcommand*{\Alphabet}{ABCDEFGHIJKLMNOPQRSTUVWXYZ1234567890}
\newcommand*{\alphabet}{abcdefghijklmnopqrstuvwxyz1234567890}
\newlength\fcaph
\newlength\fdesc
\newlength\factualfontsize
\settoheight{\fcaph}{\footnotesize \Alphabet}
\settodepth{\fdesc}{\footnotesize \Alphabet \alphabet}
\setlength{\factualfontsize}{\dimexpr\fcaph+\fdesc\relax}

\numberwithin{equation}{section}
\newcounter{commentcounter}


\usepackage[dvipsnames]{xcolor}
\definecolor{coverred}{RGB}{113, 11, 21}
\definecolor{coveryellow}{RGB}{212,175,55}

\definecolor{lightBlueSlice}{rgb}{0, 0.87, 0.87}
\definecolor{myPatriarch}{rgb}{0.5, 0.0, 0.5}
\definecolor{myFuchsia}{RGB}{255, 0, 255} 
\definecolor{myDarkOrange}{RGB}{212, 85, 0}
\definecolor{myBrown}{RGB}{145, 111, 111}

\definecolor{myGreen}{rgb}{0.0, 0.5, 0.0}
\definecolor{myGraphRed}{RGB}{255, 85, 85}
\definecolor{myGraphGreen}{RGB}{90, 160, 44}

\definecolor{maroon(html/css)}{rgb}{0.5, 0.0, 0.0}
\definecolor{aqua}{rgb}{0.0, 1.0, 1.0}

\setlength{\marginparwidth}{2cm}
\usepackage{todonotes}

\title{Algorithms for Computing Invariants of Trisected Branched Covers}


\author[Cahn]{Patricia Cahn}
\address{Smith College, Department of Mathematical Sciences, Northampton, MA 01063}
\email{\href{mailto:pcahn@smith.edu}{pcahn@smith.edu}}
\urladdr{\url{https://www.science.smith.edu/~pcahn}}

\author[Matic]{Gordana Matic}
\address{University of Georgia, Department of Mathematics, Athens, GA 30602}
\email{\href{gordanam@math.uga.edu}{gordanam@math.uga.edu}}

\author[Ruppik]{Benjamin Ruppik}
\address{Heinrich-Heine-University, Düsseldorf, Germany}
\email{\href{mailto:ruppik@hhu.de}{benjamin.ruppik@hhu.de}}
\urladdr{\url{https://bruppik.de}}

\keywords{
    Knotted surfaces, 
    branched covers, 
    group bridge trisections, 
    Reidemeister-Schreier.
}
\def\subjclassname{\textup{2020} Mathematics Subject Classification}
\expandafter\let\csname subjclassname@1991\endcsname=\subjclassname
\expandafter\let\csname subjclassname@2000\endcsname=\subjclassname
\subjclass{
    57K40, 
    57M05. 
    \hfill
    Date: \today
}




%
%
%
%

\begin{document}

\begin{abstract}
    We give diagrammatic algorithms for computing the group trisection, homology groups, and intersection form of a closed, orientable, smooth 4-manifold, presented as a branched cover of a bridge-trisected surface in $\sphere{4}$.
    The algorithm takes as input a tri-plane diagram, labelled with permutations according to the Wirtinger relations.
    We apply our algorithm to several examples, including dihedral and cyclic covers of spun knots, cyclic covers of Suciu's ribbon knots with the trefoil knot group, and an infinite family of irregular covers of the Stevedore disk double.
    As an application, we give a fully automated algorithm for computing Kjuchukova's homotopy-ribbon obstruction for a $p$-colorable knot, given an extension of that coloring over a ribbon surface in the 4-ball.
\end{abstract}

\maketitle

\section{Introduction}
\label{sec:intro}

Branched covers are a rich source of invariants of classical knots in dimension 3, and knotted surfaces in dimension 4.
For example, the dihedral linking invariant \cite{perko1964invariant} played a key role in the creation of early knot tables; signature invariants derived from cyclic and dihedral covers give obstructions to a knot being slice or ribbon \cite{kauffman1976signature, trotter1962homology, murasugi1965certain, casson1986cobordism, kjuchukova2018dihedral}; branched covers were recently shown to distinguish Suciu's family of ribbon knots with the same knot group \cite{suciu1985ribbon,kanenobu2020suciu}; and branched covers were also recently used show that push-ins of two minimal-genus Seifert surfaces for the same classical knot are not topologically isotopic in the 4-ball rel boundary, answering a 40-year-old question of Livingston \cite{hayden2022seifert}.  
In addition, just as every closed, connected, oriented 3-manifold is a 3-fold simple cover of $\sphere{3}$ branched along a knot \cite{hilden1974every,montesinos1974representation}, every such 4-manifold is a 4-fold simple cover of $\sphere{4}$ along an immersed surface \cite{piergallini1995four}, or 5-fold cover of $\sphere{4}$ along an embedded surface \cite{iori20024}.

This motivates the problem of finding an efficient, automated, and completely general method to compute invariants of a branched cover of $\sphere{4}$ or $\disk{4}$, including $\pi_1$, homology groups, and the intersection form, along a (typically smoothly embedded) surface, for not just cyclic covers, but arbitrary irregular covers.  We carry this out for smoothly embedded surfaces in $\sphere{4}$, and as an immediate byproduct of our construction, we also compute invariants of branched covers of ribbon surfaces in $\disk{4}$.  A major goal of the project is to automate the computation of these invariants, so we implement our algorithms; the code is available at \cite{Ruppik_Knot_groups_2021}.

Existing methods for computing such invariants require first constructing a Kirby diagram of the cover, and have been carried out for cyclic covers of certain surfaces in $\sphere{4}$ and ribbon surfaces $\disk{4}$ in \cite{akbulutkirby1980branchedcoverssurfaces} (see also \cite[Sec. 6.8]{gompfstipsicz1994fourmanifolds}), and for arbitrary coverings of 4-manifolds in \cite[Section 11.1]{akbulut2016fourmanifoldsbook}.
To the best of our knowledge, these techniques have not been implemented, and would be difficult to carry out on large families of examples.

We take advantage of the fact that branched covers of surfaces are particularly easy to describe diagrammatically for {\it trisected} 4-manifolds along {\it bridge-trisected} surfaces.  Trisections of 4-manifolds were introduced by Gay and Kirby in \cite{gay2016trisecting} as a 4-dimensional analogue of Heegaard splittings of 3-manifolds.  Bridge trisections of surfaces were introduced by Meier and Zupan in \cite{meier2017bridgeS4,meier2018bridge4manifolds} as a 4-dimensional analogue of a bridge-position knot or link.  
Just as a branched cover of a bridge-position link in $\sphere{3}$ admits a natural choice of Heegaard splitting, a branched cover of $\sphere{4}$ along a bridge-trisected surface admits a natural trisection.

Every branched cover of a surface $F\subset\sphere{4}$ is determined by a permutation representation $\rho \colon \pi_1(\sphere{4}-F)\rightarrow S_n$. Bridge-trisected surfaces are described diagrammatically by tri-plane diagrams \cite{meier2017bridgeS4}. Given a tri-plane diagram for $F$, every such permutation representation, and therefore, every branched cover of $F$, is described combinatorially by a permutation-labelling of the arcs of a tri-plane diagram, such that the Wirtinger relations are satisfied at each crossing, and such that the permutations along the endpoints of each tangle agree.

In this paper, we give algorithms for computing the algebraic topology of a branched cover of a bridge-trisected surface from this tri-plane data: three braid words that describe a tri-plane diagram, and a list of permutations along the endpoints of the tangles. Implementations of our algorithms can be found at \cite{Ruppik_Knot_groups_2021}.  The data we compute includes the group trisection of the fundamental group the branched cover $X$ (\Cref{RS.alg}), which determines a corresponding trisection of $X$, and hence determines $X$ up to diffeomorphism \cite{abrams2018group}.  From the group trisection of $\pi_1(X)$, we compute the homology groups of $X$ (\Cref{homology.alg}) and the intersection form of $X$ (\Cref{intersectionform.alg}).  The latter computations can be viewed as bridge-trisected analogues of the algorithms in \cite{feller2018homology,florens2022torsions}  for computing the homology and intersection form of a trisected 4-manifold from its trisection diagram.

We then carry out these algorithms on families of permutation-labelled bridge-trisected surfaces, yielding both invariants of these surfaces and new examples of trisected 4-manifolds realized as branched covers of the standard genus 0 trisection of $\sphere{4}$.  These include cyclic and dihedral covers of spun torus knots, whose covers can in some cases be viewed as spun manifolds; an infinite family of simply-connected irregular covers of the double of the standard Stevedore ribbon disk, corresponding to non-co-Hopfian subgroups of its group BS(1,2); Suciu's family of ribbon knots with the same exterior, which can be distinguished by the fundamental groups of their cyclic branched covers \cite{suciu1985ribbon, kanenobu2020suciu}; and an infinite family of dihedral covers of singular surfaces whose covers have unbounded signature.  The latter family illustrates another application of our work, as it yields a new algorithm for computing Kjuchukova's invariant $\Xi_p(K,\rho)$ of a knot $K$ equipped with a $p$-coloring $\rho$ \cite{kjuchukova2018dihedral}, which give an obstruction to a knot being homotopy ribbon \cite{geske2021signatures,cahn2017singular,cahn2018computing}. The algorithm takes as input a tri-plane presentation for a ribbon surface with boundary $K$, over which the coloring $\rho$ extends.

\subsection*{Outline}

In \Cref{sec:trisection} we review trisections, bridge trisections, and tri-plane diagrams.
In \Cref{sec:group_trisection}, we review group trisections and permutation-labelled tri-plane diagrams, our method for diagrammatically presenting an arbitrary branched cover of $\sphere{4}$.
Our algorithm for computing the group trisection of the branched cover of $\sphere{4}$ along a bridge-trsected surface, \Cref{RS.alg}, is given in \Cref{sec:trisected_RS}.
\Cref{homology:sec} gives \Cref{homology.alg} and \Cref{intersectionform.alg} for computing the homology groups and intersection form of the branched cover, respectively. 
We use the 3-colored spun trefoil, whose branched cover is $\sphere{2}\times \sphere{2}$, as a running example throughout \Cref{sec:trisected_RS} and \Cref{homology:sec}.  Examples and applications are given in \Cref{sec:twist_spun}, \Cref{stevedore.sec}, \Cref{sec:suciu}, and \Cref{xi.sec}.

\subsection*{Acknowledgments}

This project started at the 2020 Virtual Winter Trisectors Meeting, which was financially supported by the NSF.
 We are particularly grateful to Akram Alishahi and Juanita Pinz\'on Caicedo, for many thoughtful conversations throughout the early stages of the project. We are also grateful to the workshop organizers David Gay, Jeffrey Meier, and Alex Zupan.  Special credit should also go to Jason Joseph and Arunima Ray. PC was supported by NSF DMS-2145384.  BR was supported by the Max Planck Institute for Mathematics in Bonn.

\section{Trisections and Bridge Trisections}
\label{sec:trisection}

\subsection{Trisections} A $(g;k_1,k_2,k_3)-$\emph{trisection} of a smooth, compact, orientable, closed 4-manifold $X$ is a decomposition of $X$ into three standard 4-dimensional 1-handlebodies
\begin{equation*}
	X = X_{1} \cup X_{2} \cup X_{3}
\end{equation*}
with conditions on the pairwise and triple intersections as follows. The 4-dimensional 1-handlebodies are diffeomorphic to the boundary connected sums $X_{i} \cong \bcs^{k_{i}} \sphere{1} \times \disk{3}$, with pairwise intersections $X_{i} \cap X_{j} \cong \bcs^{g} \sphere{1} \times \disk{2}$ being  3-dimensional handlebodies $H_{\alpha}, H_{\beta}$ and $H_{\gamma}$, and triple intersection $X_{1} \cap X_{2} \cap X_{3} \cong \Sigma_{g}$ the \emph{central surface} of the trisection a genus $g$ surface $\Sigma_g$.
For each $i$, the pairwise union of the 3-dimensional handlebodies adjacent to $X_{i}$ determine a Heegaard splitting of $\partial X_{i} \cong \cs^{k_{i}} \sphere{1} \times \sphere{2}$. The pieces of a trisection, together with the gluing information, are illustrated in the commutative cube of topological spaces in \Cref{fig:trisection_diagram} (left). 
\index{trisection}
\index{central surface}

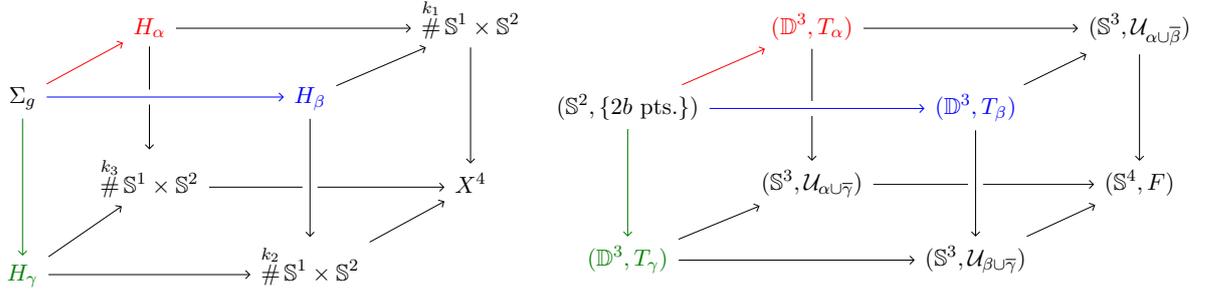
\begin{figure}
\begin{subfigure}[b]{.5\textwidth}
\adjustbox{scale=.8,center}{
    \begin{tikzcd}[row sep=scriptsize, column sep=scriptsize]
        & 
        \color{red}{H_{\alpha}}
        \arrow[rr] \arrow[dd] & &
        {
            \cs^{k_{1}} \sphere{1} \times \sphere{2} 
        }
        \arrow[dd] \\
        \color{black}{\Sigma_{g}}
        \arrow[ur, red] \arrow[rr, crossing over, blue] \arrow[dd, myGreen]
        & &
        \color{blue}{H_{\beta}}
        \ar[ur]
        \\
        & 
        {
            \cs^{k_{3}} \sphere{1} \times \sphere{2} 
        }
        \arrow[rr] & &
        X^{4} \\
        \color{myGreen}{H_{\gamma} }
        \arrow[rr] \arrow[ur]
        & &
        {
            \cs^{k_{2}} \sphere{1} \times \sphere{2} 
        }
        \arrow[from=uu, crossing over] \arrow[ur]
    \end{tikzcd}}
    \end{subfigure}%
    ~
    \begin{subfigure}[b]{.5\textwidth}
    \adjustbox{scale=.8,center}{\begin{tikzcd}[row sep=scriptsize, column sep=scriptsize]
		& 
		\color{red}{(\disk{3}, T_{\alpha})}
		\arrow[rr] \arrow[dd] & &
		{
			(\sphere{3}, \mathcal{U}_{\alpha \cup \overline{\beta}}) 
		}
		\arrow[dd] \\
		\color{black}{(\sphere{2}, \{2b \text{ pts.}\})}
		\arrow[ur, red] \arrow[rr, crossing over, blue] \arrow[dd, myGreen]
		& &
		\color{blue}{(\disk{3}, T_{\beta})}
		\ar[ur]
		\\
		& 
		{
			(\sphere{3}, \mathcal{U}_{\alpha \cup \overline{\gamma}}) 
		}
		\arrow[rr] & &
		(\sphere{4}, F) \\
		\color{myGreen}{(\disk{3}, T_{\gamma})}
		\arrow[rr] \arrow[ur]
		& &
            {
			(\sphere{3}, \mathcal{U}_{\beta \cup \overline{\gamma}}) 
		}
		\arrow[from=uu, crossing over] \arrow[ur]
	\end{tikzcd}
    }
    \end{subfigure}
    
    \caption{
        The pieces in the definition of a (parameterized) trisection of $X^{4}$ (left) and pieces of a (parameterized) $b$-bridge trisection of a smoothly embedded closed surface $F$ in the 4-sphere (right), as a commutative cube of (pairs of) topological spaces.
        \label{fig:trisection_diagram}
    }
\end{figure}

Gay and Kirby \cite{gay2016trisecting} show that every closed, connected, oriented 4-manifold has a trisection, which is unique up to a stabilization operation that increases the genus of the central surface and of the handlebodies.

The most basic example is the unique genus 0 trisection of the 4-sphere, which we can describe using the standard open book decomposition of $\sphere{4}$ with binding an unknotted $\sphere{2}$ and whose pages are 3-balls, as follows:
Cut the 4-sphere into the three sectors consisting of the union of pages 
\begin{equation*}
	X_{1} = \disk{3}_{[0, \frac{2\pi}{3}]},
	X_{2} = \disk{3}_{[\frac{2\pi}{3}, \frac{4\pi}{3}]}
	\textup{ and } 
	X_{3} = \disk{3}_{[\frac{4\pi}{3}, 0]}.
\end{equation*}
Their pairwise intersections are in the extremal 3-ball pages, the unions of which give genus 0 Heegaard splittings of the $3$-sphere.

\subsection{Bridge trisections} Bridge trisections, introduced by Meier and Zupan \cite{meier2017bridgeS4}, are the 4-dimensional analogue of bridge splittings of links in 3-manifolds.
Recall that a \emph{bridge splitting} of a link $L$ in a 3-manifold $M$ is a decomposition of the manifold-link  pair $(M,L)$ into a pair consisting of a Heegaard decomposition $M=H_1 \cup \overline{H}_2$ and  a pair of trivial tangles $T_i \subset H_i$ so that $\partial T_1 = \partial T_2 \subset \Sigma = \partial H_1=\partial H_2$ and $L=T_1 \cup \overline{T}_2 $.

 Recall that a $b$-bridge \emph{trivial tangle} $T$ in a handlebody $H$ is a collection of $b$ properly embedded arcs in $H$ such that all the arcs can be simultaneously isotoped into the boundary of $H$, while fixing $\partial T$, and a $b$-bridge splitting consists of two $b$-bridge trivial tangles.

\index{handlebody}
\index{trivial!tangle}

A \emph{bridge trisection} of a smoothly embedded closed surface $F \subset \sphere{4}$ is a decomposition into a union of three trivial 2-disk systems,
\begin{equation*}
	(\sphere{4}, F) = (X_{1}, D_{1}) \cup (X_{2}, D_{2}) \cup (X_{3}, D_{3}),
\end{equation*}
where the pairwise and triple intersections are required to be of a special form, as we now explain.
The $X_{i} \cong \disk{4}$ are the 4-balls of the previously discussed standard genus $0$ trisection of $\sphere{4}$, and each 2-disk system $D_{i} \colon \coprod^{c_{i}} \disk{2} \hookrightarrow \disk{4}$ is a properly embedded \emph{trivial disk system}.
Here trivial means that the disks are simultaneously isotopic into the 3-dimensional boundary $\partial X_{i}$, fixing the boundary $\partial D_{i}$ pointwise.
The boundary of the disks is further assumed to be an endpoint-union of $b$-bridge trivial tangles in 3-balls, each tangle living in one half of the boundary $\partial X_{i} \cong \disk{3} \cup_{\sphere{2}} \disk{3}$.
We denote the three tangles by $T_\alpha$, $T_\beta$, and $T_\gamma$. Each pairwise endpoint-union $T_{\mu} \cup_{\partial} \overline{T}_{\nu}$ forms an unlink $\mathcal{U}_{\mu \cup \overline{\nu}}$ in one of the 3-spheres $\partial X_{i} \cong \sphere{3}$.
The bridge trisection has four parameters $(b; c_{1}, c_{2}, c_{3})$, where $b$ is the \emph{bridge number}, and $c_{i}$ is the number of trivial disks in the disk system of that sector.

The pieces of a bridge trisection of a surface in $\sphere{4}$ and their interactions are illustrated in the commutative cube of pairs of topological spaces in \Cref{fig:trisection_diagram} (right).

Bridge trisections exist more generally for every smoothly knotted, not necessarily orientable surface embedded in a trisected connected orientable closed smooth 4-manifold \cite{meier2018bridge4manifolds}.
The pieces of a bridge trisected surface in a general trisected 4-manifold X and their interactions are illustrated in the commutative cube of pairs of topological spaces in \Cref{fig:genus_g_bridge_trisection_diagram}. 

\begin{figure}[!htbp]
    \begin{tikzcd}[row sep=scriptsize, column sep=scriptsize]
        & 
        \color{red}{(H_{\alpha}, \mathcal{T}_{\alpha})}
        \arrow[rr] \arrow[dd] & &
        \substack{
            (\cs^{k_{1}} \sphere{1} \times \sphere{2}, \; \mathcal{U}_{\alpha \cup \overline{\beta}}) \\
            \cong \partial (X_1, \; D_1=D_{\alpha \cup \overline{\beta}})
        }
        \arrow[dd] \\
        \color{black}{(\Sigma_g, \{2b \text{ pts.}\})}
        \arrow[ur, red] \arrow[rr, crossing over, blue] \arrow[dd, myGreen]
        & &
        \color{blue}{( H_{\beta}, \mathcal{T}_{\beta})}
        \ar[ur]
        \\
        & 
        \substack{
            (\cs^{k_{3}} \sphere{1} \times \sphere{2}, \; \mathcal{U}_{\alpha \cup \overline{\gamma}}) \\
            \cong \partial (X_3, \; D_3=D_{\alpha \cup \overline{\gamma}})
        }
        \arrow[rr] & &
        (X, \mathcal{S}) \\
        \color{myGreen}{(H_{\gamma}, \mathcal{T}_{\gamma})}
        \arrow[rr] \arrow[ur]
        & &
        \substack{
            (\cs^{k_{2}} \sphere{1} \times \sphere{2}, \; \mathcal{U}_{\beta \cup \overline{\gamma}}) \\
            \cong \partial (X_2, \; D_2=D_{\beta \cup \overline{\gamma}})
        }
        \arrow[from=uu, crossing over] \arrow[ur]
    \end{tikzcd}
    \caption[Pieces of a (parameterized) $b$-bridge trisection]{
        The pieces of a (parameterized) $b$-bridge trisection of a smoothly embedded closed surface $F$ in the trisected 4-manifold X, as a commutative cube of pairs of topological spaces.
        \label{fig:genus_g_bridge_trisection_diagram}
    }
\end{figure}
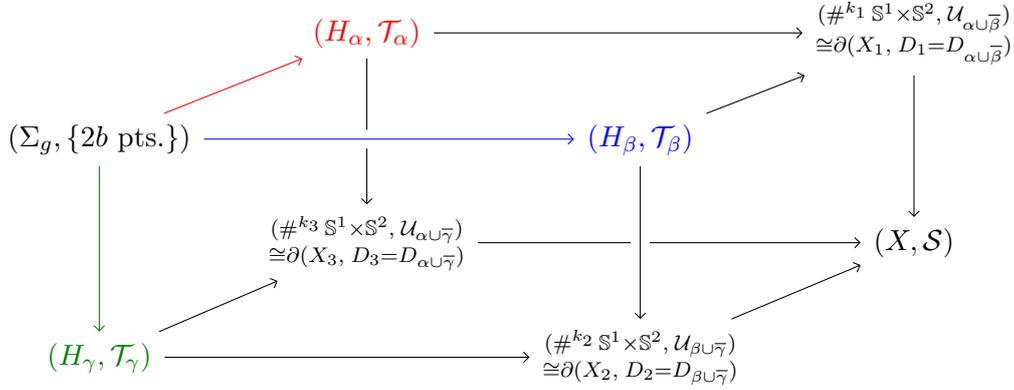

\tikzcdset{
  two tails/.code={\pgfsetarrows{tikzcd double to[reversed]-tikzcd to}},
}

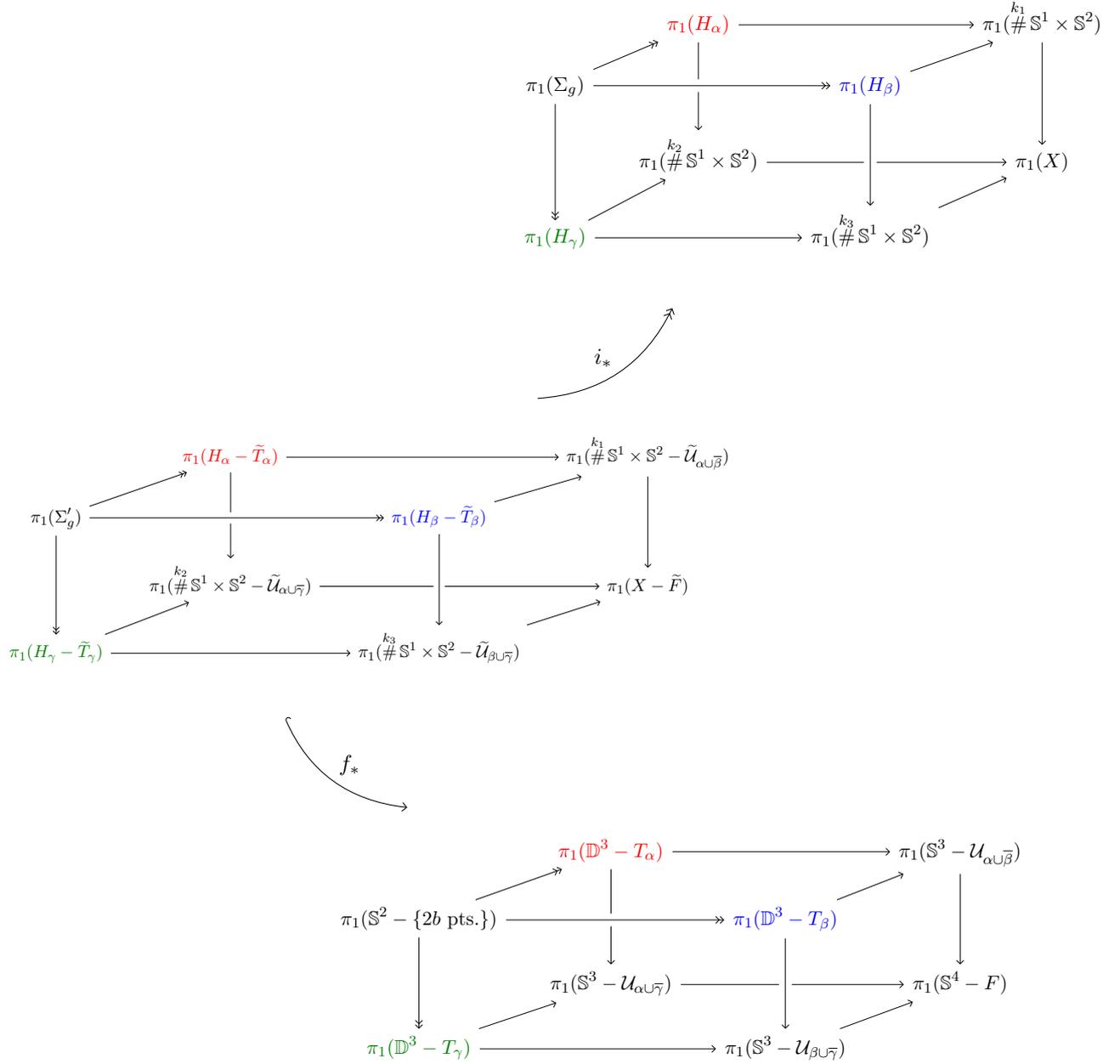
\begin{figure}[!htbp]
\begin{flushright}
\begin{subfigure}[b]{.5\textwidth}
	\adjustbox{scale=.8,left}{\begin{tikzcd}[row sep=scriptsize, column sep=scriptsize]
		& 
		\color{red}{\pi_1(H_\alpha)}
		\arrow[rr] \arrow[dd] & &
		{
			\pi_{1}(\cs^{k_{1}} \sphere{1} \times \sphere{2} ) 
		}
		\arrow[dd] \\
		\color{black}{\pi_1(\Sigma_g )}
		\arrow[ur, two heads, black] \arrow[rr, two heads, crossing over, black] \arrow[dd, two heads, black]
		& &
		\color{blue}{\pi_1(H_\beta)}
		\ar[ur]
		\\
		& 
		{
			\pi_{1}(\cs^{k_{2}} \sphere{1} \times \sphere{2}) 
		}
		\arrow[rr] & &
		\pi_{1}(X ) \\
		\color{myGreen}{\pi_1(H_\gamma)}
		\arrow[rr] \arrow[ur]
		& &
		{
			\pi_{1}(\cs^{k_{3}} \sphere{1} \times \sphere{2}) 
		}
		\arrow[from=uu, crossing over] \arrow[ur] \\
	\end{tikzcd}}
 \end{subfigure}
 \end{flushright}
 \begin{flushright}
    \begin{subfigure}[t]{.5\textwidth}
     \begin{tikzcd}
        && \quad \\
        &&\\
        \arrow[rruu, bend right,two heads,"i_*"]&&
     \end{tikzcd}
 \end{subfigure}
 \end{flushright}
 ~
\begin{flushleft}
\begin{subfigure}[b]{.5\textwidth}
	\adjustbox{scale=.7,left}{\begin{tikzcd}[row sep=scriptsize, column sep=scriptsize]
		& 
		\color{red}{\pi_1(H_\alpha - \widetilde{T}_{\alpha})}
		\arrow[rr] \arrow[dd] & &
		{
			\pi_{1}(\cs^{k_{1}} \sphere{1} \times \sphere{2} - \widetilde{\mathcal{U}}_{\alpha \cup \overline{\beta}}) 
		}
		\arrow[dd] \\
		\color{black}{\pi_1(\Sigma_g')}
		\arrow[ur, two heads, black] \arrow[rr, two heads, crossing over, black] \arrow[dd, two heads, black]
		& &
		\color{blue}{\pi_1(H_\beta - \widetilde{T}_{\beta})}
		\ar[ur]
		\\
		& 
		{
			\pi_{1}(\cs^{k_{2}} \sphere{1} \times \sphere{2}-\widetilde{\mathcal{U}}_{\alpha \cup \overline{\gamma}}) 
		}
		\arrow[rr] & &
		\pi_{1}(X - \widetilde{F}) \\
		\color{myGreen}{\pi_1(H_\gamma - \widetilde{T}_{\gamma})}
		\arrow[rr] \arrow[ur]
		& &
		{
			\pi_{1}(\cs^{k_{3}} \sphere{1} \times \sphere{2}-\widetilde{\mathcal{U}}_{\beta \cup \overline{\gamma}}) 
		}
		\arrow[from=uu, crossing over] \arrow[ur] \\
	\end{tikzcd}}
 \end{subfigure}
 \end{flushleft}
 ~
 \begin{subfigure}[t]{.5\textwidth}
     \begin{tikzcd}
        \arrow[rrdd, bend right, hook, "f_*"]&&\\
        &&\\
        && \quad
     \end{tikzcd}
 \end{subfigure}%
 ~\begin{flushright} 
\begin{subfigure}[t]{.5\textwidth}
	\adjustbox{scale=.8,right}{\begin{tikzcd}[row sep=scriptsize, column sep=scriptsize]
		& 
		\color{red}{\pi_1(\disk{3} - T_{\alpha})}
		\arrow[rr] \arrow[dd] & &
		{
			\pi_{1}(\sphere{3} - \mathcal{U}_{\alpha \cup \overline{\beta}}) 
		}
		\arrow[dd] \\
		\color{black}{\pi_1(\sphere{2} - \{2b \text{ pts.}\})}
		\arrow[ur, two heads, black] \arrow[rr, two heads, crossing over, black] \arrow[dd, two heads, black]
		& &
		\color{blue}{\pi_1(\disk{3} - T_{\beta})}
		\ar[ur]
		\\
		& 
		{
			\pi_{1}(\sphere{3}-\mathcal{U}_{\alpha \cup \overline{\gamma}}) 
		}
		\arrow[rr] & &
		\pi_{1}(\sphere{4} - F) \\
		\color{myGreen}{\pi_1(\disk{3} - T_{\gamma})}
		\arrow[rr] \arrow[ur]
		& &
		{
			\pi_{1}(\sphere{3}-\mathcal{U}_{\beta \cup \overline{\gamma}}) 
		}
		\arrow[from=uu, crossing over] \arrow[ur] \\
	\end{tikzcd}}
 \end{subfigure}%
  \end{flushright}
	\caption{
            Applying the Seifert-van Kampen theorem to obtain three compatible group trisections for a branched cover $f \colon X\rightarrow \sphere{4}$ over a bridge trisected surface.
            In the bottom row is the group trisection of the complement of the branching set.
            Its preimage under the covering map is in the middle.
            The map $f$ induces maps on corresponding vertices to give a commutative hypercube.
            Similarly, the inclusion $i$ of the unbranched cover into the branched cover $X$ induces maps from vertices of middle cube to corresponding vertices of the group trisection of $X$ (top).
            \label{fig:Seifert_van_Kampen_cube}
        }
\end{figure}

\subsection{Tri-plane diagrams} 

Our algorithm for computing invariants of the branched cover of $\sphere{4}$ along a surface $F$ uses a representation of $F$ as a tri-plane diagram.

\begin{definition}[Tri-plane diagram \cite{meier2017bridgeS4}]
    A $(b;c_1,c_2,c_3)$-{\it tri-plane diagram} is a triple of $b$-bridge trivial tangle diagrams $(A, B, C)$, such that the pairwise unions $A\cup\overline{B}$, $B\cup\overline{C}$, and $C\cup \overline{A}$ are $c_1$, $c_2$, and $c_3$-component unlinks, respectively.
\end{definition}

More generally, one can consider {\it singular tri-plane diagrams} \cite{cahn2017singular}, in which the pairwise unions of tangles are allowed to be non-trivial links.  Singular tri-plane diagrams represent surfaces with cone singularities, and appear only in \Cref{xi.sec}.

Meier and Zupan \cite{meier2017bridgeS4} show that any bridge-trisected surface $F$ in $\sphere{4}$ can be represented by a tri-plane diagram, and give a set of moves sufficient for relating any two such diagrams for $F$.  Their proof gives an explicit algorithm for converting a representation of $F$ as a \emph{banded unlink diagram} to a tri-plane diagram. We will not describe the process explicitly, but for tri-plane diagrams obtained by this method (e.g., \Cref{fig:stevedore_triplane,fig:Suciu_R_3_bridge_trisection}), we mark the bands in the tri-plane diagram, and refer the reader to \cite{meier2017bridgeS4} for details.
Additional methods for obtaining tri-plane representations of knotted ribbon surfaces in $\sphere{4}$ are given in \cite{joseph2021bridgetrisectionclassical}.

\subsection{Braid notation for trivial tangles} 

To aid the reader in using the implementation of our algorithms, we discuss the notation used in the input for our program \cite{Ruppik_Knot_groups_2021}.

Each tangle diagram is drawn in bridge position and is stored as a list of \emph{braid crossings} together with a list of \emph{strand matchings}.
For an illustration of the process, see \Cref{fig:spun_trefoil_triplane_diagram_3_coloring}.
To produce each list, first label arcs of the tangle diagrams from $0$ to $2b-1$ from left to right, where $b$ is the bridge number.
Continue this labelling from bottom to top along level sets, omitting pairs of labels as their corresponding strands are joined at a local maximum.
The list of braid crossings is obtained from reading from bottom to top; the notation $(i,j,\epsilon)$ denotes a crossing of strands $i$ and $j$ of sign $\epsilon$.
Whenever two strands $i$ and $j$ join at a local maximum, we record $(i,j)$ in the list of strand matchings.

The braid crossing lists for the red and blue tangles in the tri-plane diagram for the spun trefoil in \Cref{fig:spun_trefoil_triplane_diagram_3_coloring} are $\text{[(1,2,+),(1,2,+),(1,2,+)]}$,
and the list for the green tangle is
$\text{[(1,4,+),(1,4,+),(1,4,+)]}$.
The strand matching lists are:
$$\text{Red tangle: [(0,1),(2,7),(3,6),(4,5)] }$$
$$\text{Blue tangle: [(0,1),(2,5),(3,4),(6,7)] }$$
$$\text{Green tangle: [(0,1),(2,3),(4,7),(5,6)] }$$

The only other input needed for the program is the list of permutations along the endpoints of the tangles, in this case $$[ (1,2), (1,2),(1,3),(1,3),(1,3),(1,3),(1,3),(1,3)].$$
\begin{figure}
    \centering
    \includegraphics[width=\textwidth]{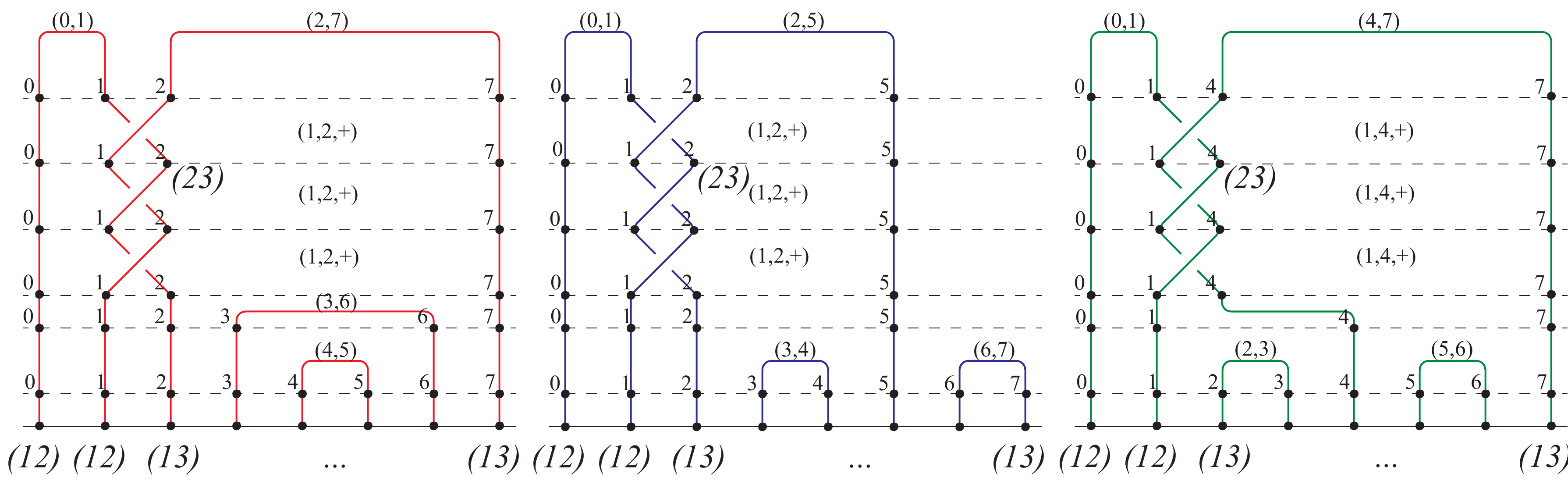}
    \caption{
        Example of a 3-colored tri-plane diagram for a bridge position of the spun trefoil, together with braid crossing and matching data for each tangle.
        \label{fig:spun_trefoil_triplane_diagram_3_coloring}
    }
	
\end{figure}

\section{Group trisections of branched covers}
\label{sec:group_trisection}

\subsection{Group trisections} 

By applying the Seifert-van Kampen Theorem to the trisection decomposition of a 4-manifold $X$, one obtains a commutative cube such that every face is a push-out. This is known as the \emph{group trisection} of $X$, pictured in \Cref{fig:Seifert_van_Kampen_cube} (top).
Group trisections were introduced by Abrams, Gay, and Kirby \cite{abrams2018group}, who showed they determine the manifold $X$ up to diffeomorphism.

The analogous commutative cube for the bridge trisection of the surface $F$ is known as a {\it group trisection of knotted surface type} \cite{blackwell2023group,ruppik2022thesis}, and pictured in \Cref{fig:Seifert_van_Kampen_cube} (bottom).

Note that because every face of the cube is a pushout, the entire group trisection (of either type) is determined by the three inclusion-induced maps out of the upper-left vertices. We refer to this sub-diagram as the {\it tripod.}

\subsection{Computing \texorpdfstring{$\pi_1$}{pi 1} from a tri-plane diagram}
\label{Wirtinger.sec}

Several methods for obtaining a presentation of $\pi_1(\sphere{4}-F)$ from a tri-plane diagram for $F$ are given in \cite{joseph2021bridgetrisectionclassical}; we will use a method similar to \cite[Theorem 4.1 (a)]{joseph2021bridgetrisectionclassical}, but with a different orientation convention at the meridians of the bridge points.  Our presentation is obtained as follows:

\begin{enumerate}
    \item Let $S$ be the image of the arcs (also called shadows) of the tangles $(T_\alpha,T_\beta,T_\gamma)$ after a choice of isotopy into the bridge sphere $\sphere{2}$. Let $B \subset \sphere{2}$ be a closed ball containing $S$. Let $P\in \sphere{2}-S$ be a basepoint on $\sphere{2}-\{p_0,p_1,\dots, p_{2b-1}\}$. Intuitively, we think of $P$ as lying on the bridge sphere ``far in front'' of each tangle.
    \item Let $x_i\in \pi_1(\sphere{2}-\{p_0,p_1,\dots, p_{2b-1}\},P)$ be a meridian of $p_i$, oriented clockwise when viewed from above, as shown in \Cref{puctured_sphere_gens.fig}.  
    This orientation convention is chosen so that a presentation for the punctured surface is
    \[
        \pi_1(\sphere{2}-\{p_0,p_1,\dots, p_{2b-1}\},P)=\langle x_0,x_1,x_2,\dots x_{2b-1}|x_0x_1x_2\dots x_{2b-1}\rangle.
    \]
    In a tri-plane diagram, we use the convention that the $x_i$ are labelled $x_0,\dots,x_{2b-1}$ from left to right. 
    \item Let $D_\mu$ be a tangle diagram for $T_\mu$, $\mu\in\{\alpha, \beta,\gamma\}$. Push $x_i$ into $\disk{4}-T_\mu$.  Note that the choice of $P$ in step (1) allows us to assume that the loop $x_i$ in the diagram $x_i\cup D_\mu$ crosses over arcs of $D_\mu$ with one exception, when passing under the $i^{\text{th}}$ bridge arc.
    \item Starting on the bridge sphere and working upwards along each tangle, use the Wirtinger relation at each crossing to associate a word in the $x_i$ to the outgoing under-strand.
    \item At each maximum where arcs $i$ and $j$ meet, add the relation $r_ir_j=1$, where $r_i$ and $r_j$ are the words at the top of the arcs resulting from successive applications of the Wirtinger relations. Let $w_1^\mu=1,w_2^\mu=1,\dots, w_b^\mu=1$ be the $b$ relations obtained in this manner from $D_\mu$.
\end{enumerate}

The tripod whose pushout produces the corresponding group trisection is therefore 

\begin{tikzcd}
    & \langle x_0,x_1,\dots, x_{2b-1} \mid w_1^\alpha,\dots w_b^\alpha \rangle \\
    \langle x_0,x_1,\dots, x_{2b-1} \mid x_0x_1\dots x_{2b-1}\rangle \arrow[ru, two heads, "\iota_\alpha"] \arrow[r, two heads, "\iota_\beta"] \arrow[rd, two heads, "\iota_\gamma"] &
    \langle x_0,x_1,\dots, x_{2b-1} \mid w_1^\beta,\dots w_b^\beta \rangle  \\
    &\langle x_0,x_1,\dots, x_{2b-1} \mid w_1^\gamma,\dots w_b^\gamma \rangle, \\
\end{tikzcd}
  
using $x_i$ to denote both an element of $\pi_1(\sphere{2}-\{2b\text{ pts.}\})$ and its images under the three inclusion maps $\iota_\mu$, for $\mu\in\{\alpha,\beta,\gamma \}$. 
  
We note that the surface relation $x_0x_1\dots x_{2b-1}=1$ can be derived from each of the three sets of relations $\{w_1^\mu,\dots, w_b^\mu\}$, $\mu\in\{\alpha,\beta,\gamma \}$.

\subsection{Monodromy homomorphisms and colorings}

An n-fold branched cover of a 4-manifold $X$ branched over an embedded surface $F$ is determined by a group homomorphism $\rho \colon \pi_{1}(X- F) \to S_n$ called its {\it monodromy homomorphism}, or informally, an \emph{$n$-coloring}.

The branched cover determined by $\rho$ is connected if the image of $\pi_{1}(X -F)$ in $S_n$ acts transitively on $\{1, \dots, n\}$. 
We will call such a homomorphism \emph{transitive}.

When $S_n$ is the dihedral group of order $n=2p$, with $\rho$ mapping meridians of $F$ to reflections, $\rho$ can be viewed as a classical Fox $p$-coloring of $F$.

\begin{definition}[Colored bridge trisection]
    \label{coloredgptrisection.def}
    A \emph{n-colored bridge trisection} is a commutative cube of transitive group epimorphisms as shown below, such that all the faces are pushouts, together with transitive group homomorphisms from each vertex group to the symmetric group $S_{n}$ making the diagram commute.

    \begin{tikzcd}
        & \pi_{1}(\disk{3} - T_{\alpha}) \arrow[rddd, dashed, bend left] \arrow[rr]
        &                         
        & \pi_1(\sphere{3} - \mathcal{U}_{\alpha \cup \overline{\beta}}) \arrow[rd] \arrow[lddd, dotted, bend right] & \\
        \pi_{1}(\sphere{2} - \{2b \textrm{ pts.}\}) \arrow[ru, two heads] \arrow[r, two heads] \arrow[rd, two heads] \arrow[rrdd, bend right] 
        & \pi_{1}(\disk{3} - T_{\beta}) \arrow[rdd, dashed, bend left] \arrow[rru, crossing over]
        &                         
        & \pi_1(\sphere{3} - \mathcal{U}_{\alpha \cup \overline{\gamma}}) \arrow[r] \arrow[ldd, dotted, bend right] \arrow[from=ull, crossing over]
        & \pi_1(\sphere{4} - F) \arrow[lldd, dotted, bend left] \\
        & \pi_{1}(\disk{3} - T_{\gamma}) \arrow[rd, dashed, bend left] \arrow[rru, crossing over]
        &                         
        & \pi_1(\sphere{3} - \mathcal{U}_{\beta \cup \overline{\gamma}}) \arrow[ru] \arrow[ld, dotted, bend right] \arrow[from=ull, crossing over] \arrow[from=ll, crossing over]
        & \\
        & & \operatorname{S}_{n} & &                                                      
    \end{tikzcd}
	
\end{definition}

\begin{proposition}
    \label{extend_coloring.prop}
    A transitive homomorphism $\rho \colon \pi_{1}(\sphere{2} - \{2b \textrm{ pts.}\}) \to \operatorname{S}_{n}$ determines a colored bridge trisection if and only if it extends over the tripod, i.e., for each $\mu\in\{\alpha, \beta,\gamma\}$, there exists a map $\rho_\mu$
    \begin{center}
        \begin{tikzcd}
            \pi_{1}(\sphere{2} - \{2b \textrm{ pts.}\} )\arrow[r,two heads,"\iota_\mu"]\arrow[rd,"\rho"]&\pi_1(\disk{3}-T_\mu)\arrow[d,dashed, "\rho_\mu"] \\
            & S_n
        \end{tikzcd}
    \end{center}
    making the diagram commute.
    Moreover, such an extension, when it exists, is unique.
    
\end{proposition}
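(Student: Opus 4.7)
The forward direction is immediate from \Cref{coloredgptrisection.def}: a colored bridge trisection includes, as part of its data, maps $\rho_\mu \colon \pi_1(\disk{3} - T_\mu) \to S_n$ satisfying $\rho_\mu \circ \iota_\mu = \rho$ for each $\mu \in \{\alpha,\beta,\gamma\}$, which is precisely an extension of $\rho$ over the tripod.

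For the converse, the strategy is to exploit the fact that every face of the group trisection cube is a pushout (by Seifert-van Kampen, as discussed in \Cref{sec:group_trisection}). Suppose compatible extensions $\rho_\alpha, \rho_\beta, \rho_\gamma$ of $\rho$ over the tripod have been chosen. For each pair $\{\mu,\nu\} \subset \{\alpha,\beta,\gamma\}$, the group $\pi_1(\sphere{3} - \mathcal{U}_{\mu \cup \overline{\nu}})$ is the pushout of the span $\pi_1(\disk{3} - T_\mu) \leftarrow \pi_1(\sphere{2} - \{2b \text{ pts.}\}) \to \pi_1(\disk{3} - T_\nu)$. Since $\rho_\mu$ and $\rho_\nu$ both restrict to $\rho$ on $\pi_1(\sphere{2} - \{2b \text{ pts.}\})$, the universal property of pushouts produces a unique homomorphism $\rho_{\mu\nu} \colon \pi_1(\sphere{3} - \mathcal{U}_{\mu \cup \overline{\nu}}) \to S_n$ compatible with both. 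Iterating once more, $\pi_1(\sphere{4} - F)$ may be expressed as the pushout of two of the $\pi_1(\sphere{3} - \mathcal{U})$ groups over a common $\pi_1(\disk{3} - T_\mu)$, and the $\rho_{\mu\nu}$ just constructed agree on this shared map (both factor through $\rho_\mu$), so the universal property again yields a unique $\rho_F \colon \pi_1(\sphere{4} - F) \to S_n$ making the entire cube commute with $S_n$.

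Uniqueness of the colored bridge trisection built from $\rho$ follows from the uniqueness clauses in each application of the universal property. Transitivity of the constructed maps $\rho_\mu$, $\rho_{\mu\nu}$, and $\rho_F$ is automatic: because every arrow out of $\pi_1(\sphere{2} - \{2b \text{ pts.}\})$ in the cube is surjective, any composite map to $S_n$ has image equal to $\rho(\pi_1(\sphere{2} - \{2b \text{ pts.}\}))$, which is transitive by hypothesis. The argument has no real obstacle and is essentially a formal diagram chase; the one small thing to double-check is that the three different ways of realizing $\pi_1(\sphere{4} - F)$ as a pushout in the last step produce the same homomorphism $\rho_F$, which again follows from uniqueness in the universal property when one views the cube minus the vertex $\pi_1(\sphere{4} - F)$ as a single colimit diagram.
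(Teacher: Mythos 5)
Your proposal is correct and follows essentially the same route as the paper: both directions are handled by the universal property of the pushout faces of the cube (your version just spells out the iteration through the pairwise-union groups and $\pi_1(\sphere{4}-F)$ in more detail, and adds the useful observation about transitivity being inherited from surjectivity of the maps out of the bridge-sphere group). The one clause you do not explicitly address is the final ``Moreover'' statement --- uniqueness of each extension $\rho_\mu$ itself --- which the paper dispatches in one line from the surjectivity of $\iota_\mu$; your uniqueness discussion concerns only the maps induced on the rest of the cube once the tripod data is fixed.
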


\begin{proof}
    The extensions $\rho_\mu$, if they exist, are unique, by the surjectivity of $\iota_\mu$. Next, given extensions $\rho_\mu$ of $\rho$ over the tripod, from the universal property of the push-out, we can recover all the other groups by building the corresponding amalgamated products.
    The maps to $S_n$ exist and are unique by the universal property of the pushout.
    Conversely, a colored bridge trisection clearly restricts to a unique map on the tripod.
\end{proof}

The diagrammatic version of a colored bridge trisection is a colored tri-plane diagram.

\begin{definition}[Colored tri-plane diagram, c.f. \cite{cahn2017singular}] \label{colored_tri_plane.def}
    A \emph{colored tri-plane diagram} is a labelling of the arcs of each tangle diagram with permutations in $S_n$ such that at each crossing, the labels satisfy the Wirtinger relations, and such that the labels agree along the endpoints of the tangles.
\end{definition}

We note that a colored tri-plane diagram determines a colored group trisection, and conversely, a colored group trisection together with a choice of tri-plane diagram (and hence a choice of Wirtinger presentation), determines a colored tri-plane diagram.

\subsection{Branched covers of bridge trisections}
A colored bridge trisection determines two related commutative cubes of covering spaces, and two related group trisections --
one for the unbranched cover $X'$ of the surface complement $\sphere{4} - F$ determined by $\rho$, and one for the branched cover ${X}$ of $\sphere{4}$ branched along $F$ determined by the same coloring.
See \Cref{fig:Seifert_van_Kampen_cube}. To see that the branched cover of a bridge-trisected surface carries the structure of a trisection, one needs only to check that the branched cover of a ball along a trivial disk tangle in dimension $n$ is an $n$-dimensional handlebody (specifically when $n=3$ and $n=4$) \cite{cahn2017singular, lambert2021symplectic}.  The unbranched cover is a manifold with boundary, and carries a trisection-like structure, but it does not correspond to a  relative trisection diagram of a manifold with boundary in the standard sense of \cite{castro2018trisections}. However, after applying the $\pi_1$ functor to the commutative cube, one gets a group trisection of knotted surface type as in \cite{ruppik2022thesis,blackwell2023group}.

Consider the inclusion $i:X'\hookrightarrow X$ of the unbranched cover into the branched cover, and its restrictions to the components of the trisections for $X'$ and $X$.
Note that one can recover the inclusion-induced maps between all corresponding groups in the group trisections for $X'$ and $X$ from the inclusion-induced maps on their tripods, namely
\[
    i_* \colon \pi_1(\Sigma_g') \twoheadrightarrow \pi_1(\Sigma_g),
\]
where $\Sigma_g'=\Sigma_g - f^{-1} \{2b \text{ pts.}\}$ is the unbranched cover of the punctured bridge sphere $\sphere{2}-\{2b \text{ pts.}\}$,
and
\[
    i_* \colon \pi_1(H_\mu - \widetilde{T}_\mu) \twoheadrightarrow \pi_1(H_\mu),
\]
for $\mu \in \{\alpha, \beta, \gamma\}$, by repeatedly applying the universal mapping property of the push-out.
This results in a commutative hypercube between the group trisections for $X'$ and $X$.
See \Cref{fig:Seifert_van_Kampen_cube}.

Because the branching set, and hence its preimage in $X$, is codimension two, the groups in the group trisection of $\pi_1(X)$ are obtained from the corresponding groups in the group trisection of $\pi_1(X')$ by adding relations that ``kill'' the meridians of the components of the preimage of the branching set.  
 
In the next section, we will describe an algorithm for computing the groups and maps in these group trisection diagrams.

\section{A Trisected Reidemeister-Schreier Algorithm}
\label{sec:trisected_RS}

Fox in \cite{fox1962quicktrip} describes a version of the \emph{Reidemeister-Schreier algorithm} which, given a coloring of a knot $K$ in $\sphere{3}$, derives a presentation of the fundamental group of the associated unbranched cover.
After lifting powers of meridians of the branching set to obtain meridians of its preimage, one obtains a presentation of the fundamental group of the branched cover of $\sphere{3}$ along $K$ by setting these lifts equal to the identity.

We will show that Fox's algorithm can be applied to a colored bridge-trisected surface in  $\sphere{4}$ to compute a group trisection of the associated unbranched and branched covers $X'$ and $X$ respectively. 

We will first use the algorithm to find the presentations for the fundamental groups of unbranched  covers of  $\sphere{2} - \{ p_0,...,p_{2b-1} \} $ and the three tangle complements, $\mathbb{D}^3 - T_{\alpha}$, $\mathbb{D} ^3- T_{\beta}$ and  $\mathbb{D}^3 -T_{\gamma}$.
This yields the tripod for the group trisection of the unbranched cover $X'$, from which we recover the entire group trisection for $X'$ by taking push-outs.
We then add in the relations corresponding to filling in the preimage of the branching set to obtain a group trisection for the branched cover $X$.
Here is a guide through the algorithm, with details and instructive examples worked out below.

\begin{algorithm}[Trisected Reidemeister-Schreier]
    \label{RS.alg}
    Let $\rho \colon \pi_{1}(\sphere{2} - \{2b \textrm{ pts.}\}) \to S_n$ be a group homomorphism that factors to a colored group trisection corresponding to a colored bridge trisected surface $F \subset \sphere{4}$ as in \Cref{coloredgptrisection.def}.
	
    Then a group trisection of the trisected branched cover $X$ (a closed 4-manifold) of $\sphere{4}$ along $F$ can be computed as follows:

    \begin{enumerate}
        \item Choose generators $x_0$, $x_1$, ..., $x_{2b-1}$ for $\pi_{1}(\sphere{2} - \{2b \textrm{ pts.}\},P)$ as shown in \Cref{puctured_sphere_gens.fig}, and consider a collection of path-lifts $\pathelement{x}_0^j$, $\pathelement{x}_1^j$, ..., $\pathelement{x}_{2b-1}^j$ of the above generators starting at the lifts $P_j$ of base point $P$, for each $j\in\{1,\dots,n\}$.
        \item Use Fox's ``claw'' attachment to associate to each $\pathelement{x}_i^j$ a closed loop $x_i^j$ based at $P_1$, for $i\in \{0,\dots,2b-1\}$ and $j \in \{1, \dots, n \}$, to obtain a total of $2bn$ generators for the unbranched cover.  
        \item Apply the relative Reidemeister-Schreier algorithm, detailed below,
        to a presentation for the group $\pi_1(\sphere{2}-\{p_0,\dots,p_{2b-1}\})$ , and the groups $\pi_1(\disk{3}-T_\mu)$ for each of the colored trivial tangles.
        Here ``relative'' means that in addition to computing a presentation of the group of the branched cover, we also remember how the branched cover of the punctured surface includes into the branched cover of the trivial tangle complement.
        This results in the tripod for the group trisection of the \emph{unbranched} cover, from which we can recover the full group trisection of the unbranched cover by taking push-outs.
        \item To obtain a group trisection for the branched cover, fill in the preimage of the branching locus by adding one relation $\loopelement{x}_i^{n_1}\loopelement{x}_i^{n_2}\dots\loopelement{x}_i^{n_k}=1$ for each $k$-cycle $(n_1,\dots,n_k)$ in the permutation $\rho(x_i)$, for all $i\in\{0,\dots,2b-1\}$.
    \end{enumerate}
\end{algorithm}

\vskip.02in
We will now give more details of the construction and the algorithm, and follow the steps in the algorithm to work out a couple of examples.

(1) We use the Wirtinger algorithm on the given tri-plane diagram for $F$ to obtain the tripod for its group trisection
\begin{equation}
    \label{downstairs_tripod.eq}
    \begin{tikzcd}
        & G_\alpha =
        \langle x_0,x_1,\dots,x_{2b-1} \mid w_1^\alpha, \dots, w_b^\alpha \rangle \\
        G =
        \langle x_0,x_1,\dots,x_{2b-1} \mid x_0x_1\dots x_{2b-1} \rangle 
        \arrow[ru, two heads, "\iota_\alpha"] \arrow[r, two heads, "\iota_\beta"] \arrow[rd, two heads, "\iota_\gamma"]
        & G_\beta = 
        \langle w = x_0, x_1, \dots, x_{2b-1} \mid w_1^\beta, \dots, w_b^\beta \rangle  \\
        & G_\gamma = 
        \langle x_0, x_1, \dots, x_{2b-1} \mid w_1^\gamma, \dots, w_b^\gamma \rangle, \\
    \end{tikzcd}
\end{equation}
as described in \Cref{Wirtinger.sec}, from which one can recover the full group trisection for the surface complement $\sphere{4}-F$ by taking push-outs.  Recall that the generators $x_i$ are elements of $G=\pi_1(\sphere{2}-\{p_0,\dots,p_{2b-1}\},P)$, with $x_i$ oriented clockwise around $p_i$ as shown in \Cref{puctured_sphere_gens.fig}, and that we also use $x_i$ to denote its image in each of the three groups $G_\gamma=\pi_1(\disk{3}-T_\mu,P)$ for $\mu\in\{\alpha, \beta,\gamma\}$.

\begin{figure}
    \resizebox{0.5\textwidth}{!}{
        \begin{tikzpicture}
            \node[anchor=south west,inner sep=0] at (0,0){\includegraphics[width=10cm]{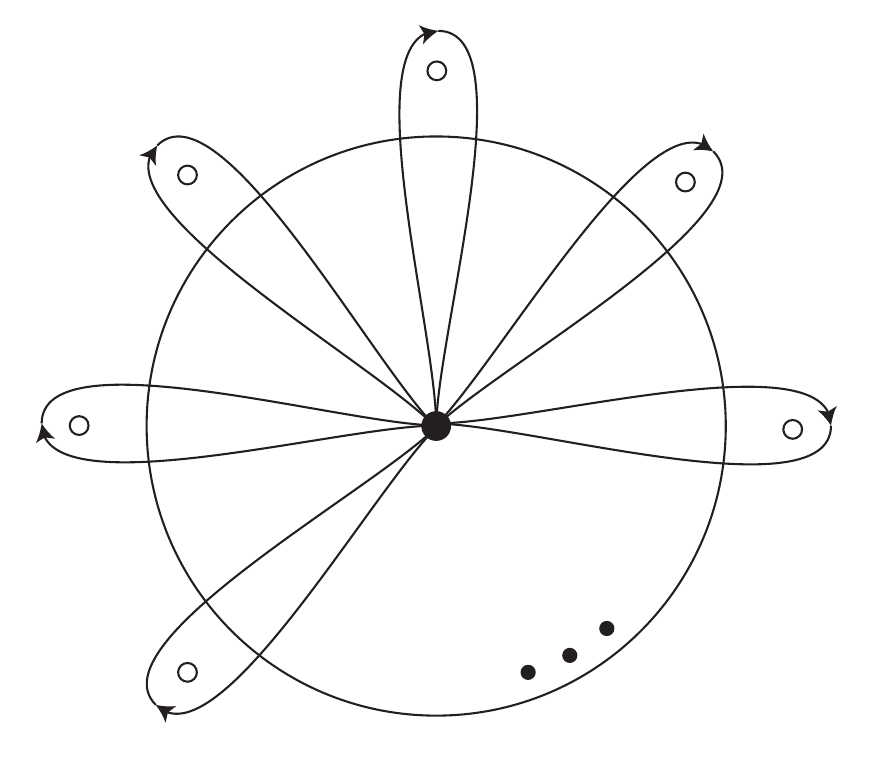}};
            \node at (5.1, 3.5) {\textcolor{black}{$P$}};
            \node at (1.5, 0.2) {\textcolor{violet}{$x_{2b-3}$}};
            \node at (-0.3, 4) {\textcolor{violet}{$x_{2b-2}$}};
            \node at (1.4, 7.5) {\textcolor{violet}{$x_{2b-1}$}};
            \node at (5, 9) {\textcolor{violet}{$x_{0}$}};
            \node at (8.5, 7.5) {\textcolor{violet}{$x_{1}$}};  
            \node at (10, 4) {\textcolor{violet}{$x_{2}$}};
        \end{tikzpicture}
    }
    \caption{
        Generators $x_0, x_1, \ldots, x_{2b-1}$ of $\pi_1(\sphere{2}- \{p_0, \ldots, p_{2b-1}\},P )$.
        \label{puctured_sphere_gens.fig}
    }
\end{figure}

The Reidemeister-Schreier algorithm for writing down presentations for the fundamental groups of covering spaces, as roughly described by Fox in \cite{fox1962quicktrip},  applied to the four groups in the tripod above, works as follows.

A connected $n$-fold unbranched cover for the complement of the knotted surface is determined by a transitive homomorphism  $\rho \colon G \to S_n$ of  $G=\pi_1(\sphere{2} - \{p_0,...,p_{2b-1}\},P )$  to the symmetric group $S_n$ that extends to the fundamental groups  of the three tangle complements  $G_{\alpha}=\pi_1(\mathbb{D} ^3 - T_{\alpha},P)$, $G_{\beta}=\pi_1(\mathbb{D} ^3 - T_{\beta},P)$ and $G_{\gamma}=\pi_1(\mathbb{D} ^3 - T_{\gamma},P)$ to $S_{n}$ as in \Cref{coloredgptrisection.def} and \Cref{extend_coloring.prop}.

The transitive homomorphism $\rho : \pi_1(\sphere{2} - \{p_0,...,p_{2b-1}\} ) \to S_n$ determines a connected  n-fold cover of the punctured sphere, which is a punctured surface $\Sigma_g'$ of genus $g$. 
We label the $n$ points in the fiber above the base point $P$  by $P_1,\dots,P_n $, and consider the lifts  of the generators $x_i$ of $\pi_1(\sphere{2}- \{p_0,...,p_{2b-1}\}$ starting at the different $P_j$.
We denote by $\pathelement{x}_i^j$, the path-lift of $x_i$ to $\Sigma_g'$ that starts at $P_j$.
Note that this $\pathelement{x}_i^j$ ends at $P_{\rho(x_i)(j)}$.

 \definecolor{green1}{cmyk}{.60,0,1.00,0}
    \definecolor{blue1}{cmyk}{.70,.20,0,0}
    \definecolor{purple1}{cmyk}{.55,.70,0,0}
    \definecolor{green2}{cmyk}{1.00,.30,1.00,.20}
    \definecolor{burgundy}{cmyk}{0,1,1,.5}
    \definecolor{blue2}{cmyk}{1,1,0,0}
    \definecolor{green2}{cmyk}{.5,0,.5,.5}
    \begin{figure}
        \centering
         \begin{tikzpicture}
            \node[anchor=south west,inner sep=0] at (6,0){ \includegraphics[width=6.8cm]{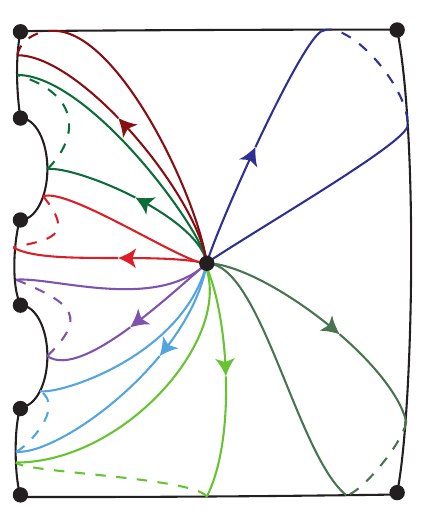}};
            \node at (10, 4.25) {\textcolor{black}{$P$}};
            \node at (12.6, 0) {\textcolor{violet}{$p_1$}};
            \node at (12.6, 8.2) {\textcolor{violet}{$p_0$}};
            \node at (6, 0) {\textcolor{violet}{$p_2$}};
            \node at (6, 2) {\textcolor{violet}{$p_3$}};
            \node at (6, 3.6) {\textcolor{violet}{$p_4$}};
            \node at (6, 5) {\textcolor{violet}{$p_5$}};
            \node at (6, 6.5) {\textcolor{violet}{$p_6$}};
            \node at (6, 8.2) {\textcolor{violet}{$p_7$}};
            \node at (7, 0.75) {\textcolor{green1}{${x}_2$}};
            \node at (7, 1.8) {\textcolor{blue1}{${x}_3$}};
            \node at (7, 3.6) {\textcolor{purple1}{${x}_4$}};
            \node at (7, 5.3) {\textcolor{red}{${x}_5$}};
            \node at (7, 6.5) {\textcolor{green2}{${x}_6$}};
            \node at (7, 7.4) {\textcolor{burgundy}{${x}_7$}};
            \node at (11.2, 7.4) {\textcolor{blue2}{${x}_0$}};
            \node at (11.2, 1.5) {\textcolor{green2}{${x}_1$}};
        \end{tikzpicture}
        \caption{Generators $x_0,\dots,x_7$ of $\pi_1(\sphere{2}-\{p_0,\dots,p_7\})$. }
        \label{fig:spun_trefoil_downstairs}
    \end{figure}
    \begin{figure}
        \centering
         \begin{tikzpicture}
            \node[anchor=south west,inner sep=0] at (0,0){ \includegraphics[width=18cm]{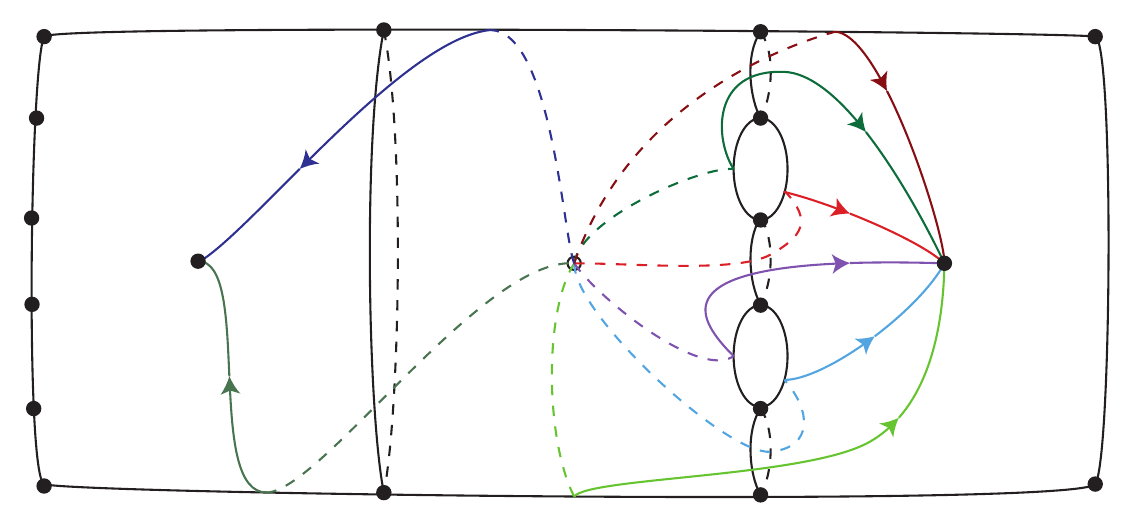}};
            \node at (2.5, 4.25) {\textcolor{black}{$P_{2}$}};
            \node at (8.5, 4.25) {\textcolor{black}{\reflectbox{$P_{1}$}}};
            \node at (15.5, 4.25) {\textcolor{black}{$P_{3}$}};
            \node at (0, 0.75) {\textcolor{violet}{$p_2$}};
            \node at (0, 2) {\textcolor{violet}{$p_3$}};
            \node at (0, 3.6) {\textcolor{violet}{$p_4$}};
            \node at (0, 5) {\textcolor{violet}{$p_5$}};
            \node at (0, 6.5) {\textcolor{violet}{$p_6$}};
            \node at (0, 7.8) {\textcolor{violet}{$p_7$}};
            \node at (6, 0) {\textcolor{violet}{$p_1$}};
            \node at (6, 8.2) {\textcolor{violet}{$p_0$}};
            \node at (11.6, 0) {\textcolor{violet}{$p_2$}};
            \node at (11.6, 2) {\textcolor{violet}{$p_3$}};
            \node at (11.6, 3.6) {\textcolor{violet}{$p_4$}};
            \node at (11.6, 5) {\textcolor{violet}{$p_5$}};
            \node at (11.6, 6.5) {\textcolor{violet}{$p_6$}};
            \node at (11.6, 8.2) {\textcolor{violet}{$p_7$}};
            \node at (17.5, 0) {\textcolor{violet}{$p_1$}};
            \node at (17.5, 8.2) {\textcolor{violet}{$p_0$}};
            \node at (13, 0.75) {\textcolor{green1}{$\pathelement{x}_2^1$}};
            \node at (13, 2) {\textcolor{blue1}{$\pathelement{x}_3^1$}};
            \node at (13, 3.6) {\textcolor{purple1}{$\pathelement{x}_4^1$}};
            \node at (13, 5.5) {\textcolor{red}{$\pathelement{x}_5^1$}};
            \node at (13, 6.5) {\textcolor{green2}{$\pathelement{x}_6^1$}};
            \node at (13, 7.5) {\textcolor{burgundy}{$\pathelement{x}_7^1$}};
            \node at (4, 2.5) {\textcolor{green2}{$\pathelement{x}_1^1$}};
            \node at (4, 5.3) {\textcolor{blue2}{$\pathelement{x}_0^1$}};
        \end{tikzpicture}
        \caption{
            Lifts $\pathelement{x}_i^1$ of generators $x_i$ of $\pi_1(\sphere{2}-\{p_0,\dots,p_7\})$ starting at $P_1$ on the 3-fold cover of $\sphere{2}-\{p_0,\dots,p_7$\}, corresponding to the coloring $\rho(x_0)=\rho(x_1)=(12)$, and $\rho(x_2),\dots,\rho(x_7)=(13)$. 
            \label{spun_trefoil_lifts.fig}
        }
    \end{figure}

(2) We next explain how to use Fox's ``claw attachment'' to associate a closed loop $x_i^j$ based at $P_1$ to each path $\pathelement{x}_i^j$, to obtain a set of generators for the fundamental group of the unbranched cover $X'$ of $\sphere{4}-F$.

We attach a tree with one central vertex $V$ and n edges $e_1=[V,Q_1], \dots, e_n=[V,Q_n] $, the ``claw'', to our trisected unbranched cover $X'$ by identifying the points  $Q_i$ with  $P_i$ for each $i\in\{1,\dots,n\}$. This produces a space whose fundamental group can be obtained from the fundamental group of $X'$ by adding $n-1$ extra free generators.  Since the claw is attached along the core surface $\Sigma_g'$ of $X'$, the same is true for each group in the group trisection for $X'$, shown in the center of \Cref{fig:Seifert_van_Kampen_cube}. 

We will then ``collapse'' this claw to get a space homotopy equivalent to the original $X'$, by choosing paths  $\gamma_j$ in $\Sigma_g'$ from $P_1$ to each  $P_ j$ for each $j\in\{1,\dots,n\}$ such that (1) $\gamma_j$ is a lift of a word $\tau_j$ in the generators $x_i$ and (2) the union of the paths $\gamma_j$ is a tree $T$ with vertices $P_i$ and with edges a subset of the $\pathelement{x}_i^j$. Note that the paths $\pathelement{x}_i^j$ are disjoint except at their endpoints. Then we add a two cell that fills the triangle $\langle P_i, V, P_j \rangle$ with edges $e_i^{-1}, e_j, e^{-1}$ for each edge $e$ of the tree $T$. See \Cref{claw.fig}.  

\begin{remark}
    The fact that the union of all the paths $\gamma_j$ is a tree clearly implies that for any word $\tau_j$ all of its initial sub-words are words $\tau_k$ for the points $P_k$ that lie on the lift of $\tau_j$.
    Note that this corresponds to the ``Schreier condition'' in the algebraic formulation of the Reidemeister-Schreier algorithm, and the extra relations that are induced by adding these two cells correspond to adding in the Schreier relations in the standard Reidemeister-Schreier construction of the presentation of a subgroup of a finitely presented group (here the fundamental group of the cover).
\end{remark}

\begin{figure}
        \begin{tikzpicture}
            \node[anchor=south west,inner sep=0] at (0,0){\includegraphics[width=10cm]{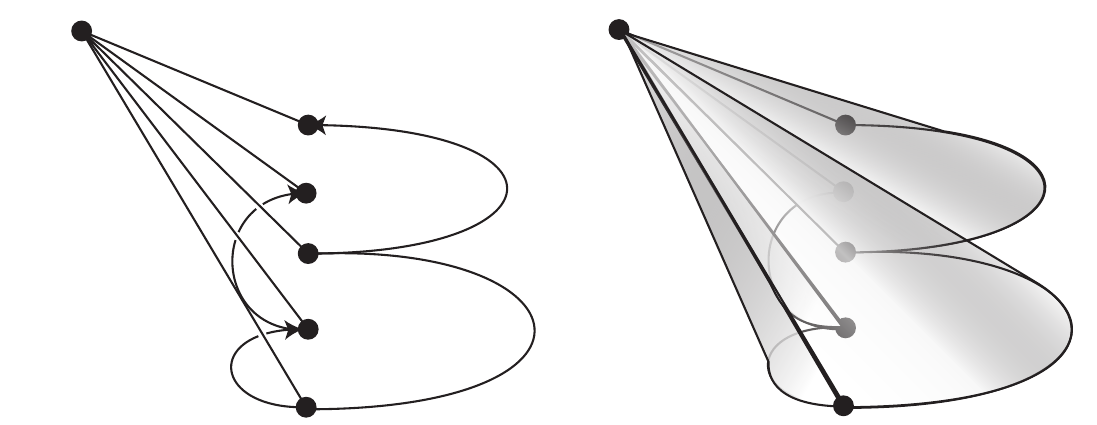}};
            \node at (3.2, 0) {\textcolor{black}{$P_1$}};
            \node at (3.2, 0.8) {\textcolor{black}{$P_2$}};
            \node at (2.5, 1.6) {\textcolor{black}{$P_3$}};
            \node at (3.2, 2.1) {\textcolor{black}{$P_4$}};
            \node at (3.2, 3) {\textcolor{black}{$P_5$}};
            \node at (1.8, .6) {\textcolor{black}{$\pathelement{x}_1^1$}};
            \node at (1.6, 1.2) {\textcolor{black}{$\pathelement{x}_1^1\pathelement{x}_1^2$}};
            \node at (5.1, .6) {\textcolor{black}{$\pathelement{x}_0^1$}};
            \node at (5.1, 2) {\textcolor{black}{$\pathelement{x}_0^1\pathelement{x}_3^3$}};
             \node at (0,4.4) {\textcolor{black}{$V$}};
        \end{tikzpicture}
    \caption{
       \label{claw.fig}
       Attaching 2-cells to collapse the ``claw'', producing the claw relations
       $\{c_2=x_1^1=1, c_3=x_0^1=1,c_4=x_1^1x_1^1x_1^2=1,  c_5=x_0^1x_0^1x_3^3=1\}.$ The corresponding 5-coloring is given in~\Cref{stevedore_RS_example.ex}.
    }
\end{figure}

We next determine a set of generators for the unbranched cover, with claw attached, based at the root $V$ of the claw.
The advantage of this perspective is that the $P_i$ are treated symmetrically.
The final step will be to change the basepoint to be one of the $P_i$, namely $P_1$.

Recall that the path $\pathelement{x}_i^j$ is the lift of $x_i$ that starts at $P_j$ and ends at $P_{\rho(i)(j)}$. To make the paths into loops based at V we start and end  with appropriate edges  of the ``claw'' to get the loop $\loopelement{x}_i^j = e_j*\pathelement{x}_{i}^j*{e}_{\rho(i)(j)}^{-1}$. If we use the attached triangles to homotope the paths $e_j$ and $e_k^{-1}$ across them to paths $e_1 * \gamma_j$ and $\gamma_k^{-1}*e_1^{-1}$, we get the loop  $\widetilde{x}_i^j = e_1*\gamma_j*\pathelement{x}_{i}^j* \gamma_{\rho(i)(j)}^{-1}*{e}_1^{-1}$ based at $V$. The corresponding closed loop based at $P_1$ is obtained by dropping the $e_1$ terms at the beginning and the end.  We will suppress this last drop in the notation, and define the generator $x_i^j$ based at $P_1$  as follows: $$\loopelement{x}_{i}^j=\gamma_j*\pathelement{x}_{i}^j* \gamma_{\rho(i)(j)}^{-1}\in\pi_1(\Sigma_g',P_1)\leq \pi_1(X',P_1).$$

The 2-cells attached produce relations in $\pi_1(\Sigma',P_1)\leq \pi_1(X,P_1)$ as follows:
For each path $\gamma_j$ from $P_1$ to $P_j$, $j\in\{2,\dots,n\}$, write
$\gamma_j=(\pathelement{x}_{i_1}^{j_1})^{\epsilon_1}\dots(\pathelement{x}_{i_k}^{j_k})^{\epsilon_k}$, $\epsilon_j\in\{\pm 1\}$.
Attaching the 2-cell that fills the triangle $\langle P_i, V,P_j\rangle$, and then collapsing the edges of the claw, corresponds to adding the relation $(\loopelement{x}_{i_1}^{j_1})^{\epsilon_1}\dots(\loopelement{x}_{i_k}^{j_k})^{\epsilon_k}=1$.
We refer to these $n-1$ relations as the \emph{claw relations} and denote the relation corresponding to $\gamma_j$ by $c_j=1$.
See \Cref{claw.fig}.

(3) To obtain presentations for the groups in the tripod of the group trisection of the unbranched cover $X'$, we must lift the relations in the corresponding groups in the tripod for the trisection of $\sphere{4}-F$, shown in \Cref{downstairs_tripod.eq}, and then add in the claw relations.
It suffices to explain the process of lifting a single relation according to the coloring $\rho$.

Each relation is given by a word in the generators and their inverses, i.e., $w=1$ where $w= x_{i_1}^{\epsilon_1}x_{i_2}^{\epsilon_2}\dots x_{i_k}^{\epsilon_k}, \epsilon_j=\pm 1$.
Each word $w$ will lift to $n$ words, $w_l$ for $l = 1, \dots, n$ with  

\[
    w_l = (\pathelement{x}_{i_1}^{j_1(l)})^{\epsilon_1}(\pathelement{x}_{i_2}^{j_2(l)})^{\epsilon_2}\dots (\pathelement{x}_{i_k}^{j_k(l)})^{\epsilon_k},
\]
and $j_1(l)=l$, where each $j_m(l)$ is determined inductively by making sure that the path $(\pathelement{x}_{i_2}^{j_2(l)})^{\epsilon_2}$
starts where $(\pathelement{x}_{i_1}^{j_1(l)})^{\epsilon_1}$ ends, $(\pathelement{x}_{i_3}^{j_3(l)})^{\epsilon_3}$
starts where $(\pathelement{x}_{i_2}^{j_2(l)})^{\epsilon_2}$ ends, etc. 
Here we repeatedly use the fact that $\pathelement{x}_i^j$ is the lift of $x_i$ that starts at $P_j$ and ends at $P_{\rho(i)(j)}$.
In general, if we lifted first $s$ letters in 
$w=x_{i_1}^{\epsilon_1}x_{i_2}^{\epsilon_2}\dots x_{i_k}^{\epsilon_k}$ starting at $l$
to get
$w_l(s) = (\pathelement{x}_{i_1}^{j_1(l)})^{\epsilon_1}(\pathelement{x}_{i_2}^{j_2(l)})^{\epsilon_2}\dots (\pathelement{x}_{i_s}^{j_s(l)})^{\epsilon_s}$
and the lift $w_l(s)$ ends at $P_a$, we continue inductively with 
$(\pathelement{x}_{i_{s+1}}^{j_{s+1}(l)})^{\epsilon_{s+1}}$ where now

$$i_{s+1}= a, \hskip.2in j_{s+1}=\rho(a)(s+1) \hskip.2in \text{if} \hskip.2in \epsilon_{s+1}=1$$   

$$i_{s+1}= \rho(a)^{-1}(j_{s+1}),  \hskip.2in j_{s+1}=a \hskip.2in \text{if} \hskip.2in\epsilon_{s+1}=-1$$ 

where the second case indicates that we go ``backwards'' when $\epsilon_j=-1$.  

After adding the claw relations, we can replace each path $\pathelement{x}_i^j$ with the loop $\loopelement{x}_i^j$ to obtain the relation 
\[
    w_l = (\loopelement{x}_{i_1}^{j_1(l)})^{\epsilon_1}(\loopelement{x}_{i_2}^{j_2(l)})^{\epsilon_2}\dots (\loopelement{x}_{i_k}^{j_k(l)})^{\epsilon_k}=1.
\]

Applying this process to each relation in \Cref{downstairs_tripod.eq}, and adding in the claw relations for each group, we get presentations for groups in the tripod for the unbranched cover $X'$.

\begin{equation}
    \label{upstairs_tripod_unbranched.eq}
    \begin{tikzcd}
        & \widetilde{G}_\alpha=\langle x_i^j \mid (w_1^\alpha)_l,\dots (w_b^\alpha)_l, c_k \rangle \\
        \widetilde{G} = \langle x_i^j \mid w_l, c_k \rangle \arrow[ru, two heads, "\iota_\alpha"]
        \arrow[r, two heads, "\iota_\beta"] \arrow[rd, two heads, "\iota_\gamma", swap] &
        \widetilde{G}_\beta = \langle x_i^j \mid (w_1^\beta)_l,\dots (w_b^\beta)_l, c_k \rangle \\
        & \widetilde{G}_\gamma = \langle x_i^j \mid (w_1^\gamma)_l,\dots (w_b^\gamma)_l, c_k \rangle \\
    \end{tikzcd}
\end{equation}
where indices $i,j$ for the generators range over all $i\in\{0,\dots, 2b-1\}$ and $j\in\{1,\dots n\}$; indices for the lifted relations range over all $l\in\{1,\dots,n\}$; and the index $k$ for the claw relations $c_k$ ranges over all $k\in\{2,\dots,n\}$.

(4) Finally, we produce presentations for groups in the tripod of the group trisection for the branched cover $X$ by adding to each of the presentations in \Cref{upstairs_tripod_unbranched.eq} relations of the form  $\mu=1$, which fill in each meridian $\mu$ of each point in the preimage of the point $p_i$, for all $i\in\{0,\dots, 2b-1\}$. 

We first write each such meridian $\mu$ as a concatenation of paths $\pathelement{x}_i^j$.  Let $(n_1,\dots,n_k)$ be a disjoint $k$-cycle in the permutation $\rho(x_i)$. Then $\mu=\pathelement{x}_i^{n_1}\pathelement{x}_i^{n_2}\dots\pathelement{x}_i^{n_k}$.

After adding the claw relations as in step (2), we can replace each $\hat{x}_i^j$ with $x_i^j$ to obtain the relation

\[
    \loopelement{x}_i^{n_1} \loopelement{x}_i^{n_2} \dots \loopelement{x}_i^{n_k}=1,
\]

which is just the lift of $(x_i)^k$ starting at $P_{n_1}$, according to the lifting process in step (3). 
We obtain one such relation for each $k$-cycle in the permutation $\rho(x_i)$, for all $i \in \{0, \dots, 2b-1\}$. 

\begin{example}
    \label{stevedore_RS_example.ex}
    We first present an example of the ordinary (non-trisected) Reidemeister-Schreier algorithm for a punctured surface group and a (somewhat complicated) coloring, to introduce the algorithm. This is the second map in our family of representations of the complement of the Stevedore disk double in \Cref{stevedore.sec}.  The full trisected version is presented, for a simpler coloring, in the next example.

    Consider the homomorphism
    \begin{align*}
        \rho \colon \pi_1(\sphere{2}-\{p_0,\dots,p_9\}, P)
        =
        \langle x_0, \dots, x_9 \mid x_0\dots x_9 \rangle
        & \rightarrow S_5 \\
        \rho(x_0) & = (1342) \\
        \rho(x_1) & = (1243) \\
        \rho(x_2)=\rho(x_4)=\rho(x_6)=\rho(x_8) & = (2453) \\
        \rho(x_3)=\rho(x_5)=\rho(x_7)=\rho(x_9) & = (2354).
    \end{align*}

    The path lifts
    $\{\gamma_2,\gamma_3, \gamma_4, \gamma_5\}=\{\pathelement{x}_1^1, \pathelement{x}_0^1, \pathelement{x}_1^1\pathelement{x}_1^2, \pathelement{x}_0^1\pathelement{x}_3^3\}$
    all begin at $P_1$ and end at $P_2$, $P_3$, $P_4$, and $P_5$ respectively, as shown in \Cref{claw.fig}.
    Therefore, the generators of the punctured surface $\pi_1(\Sigma_g',P_1)$, namely $x_i^j$ with $0\leq i \leq 9$ and $1\leq j\leq 5$, are expressed as products of the paths $\pathelement{x}_i^j$ as follows:
    
    \setlength{\tabcolsep}{10pt} 
    \renewcommand{\arraystretch}{1.5} 
    \begin{center}
        \begin{tabular}{c|c}
        \toprule
        \multicolumn{2}{c}{
            ${x}_0^1=\hat{x}_0^1(\hat{x}_0^1)^{-1}=1,
            {x}_1^1=\hat{x}_1^1(\hat{x}_1^1)^{-1}=1,
            {x}_i^1=\hat{x}_i^1 \text{ for } 2\leq i \leq 9$
        }\\
        \midrule
        $x_0^2=\hat{x}_1^1\hat{x}_0^2$&$x_0^3=\hat{x}_0^1\hat{x}_0^3(\hat{x}_1^2)^{-1}(\hat{x}_1^1)^{-1} $\\
        $x_1^2=\hat{x}_1^1\hat{x}_1^2(\hat{x}_1^2)^{-1}(\hat{x}_1^1)^{-1} =1$& $x_1^3= \hat{x}_0^1\hat{x}_1^3$\\
        $x_{2i}^2=\hat{x}_1^1\hat{x}_{2i}^2(\hat{x}_1^2)^{-1}(\hat{x}_1^1)^{-1}  \text{ for } 2\leq i \leq 4$& $x_{2i}^3=\hat{x}_0^1\hat{x}_{2i}^3 (\hat{x}_1^1)^{-1} \text{ for } 2\leq i \leq 4$ \\
         $x_{2i+1}^2=\hat{x}_1^1\hat{x}_{2i+1}^2 (\hat{x}_0^1)^{-1} \text{ for } 2\leq i \leq 4$&$x_{2i+1}^3=\hat{x}_0^1\hat{x}_{2i+1}^3(\hat{x}_3^3)^{-1}(\hat{x}_0^1)^{-1} \text{ for } 2\leq i \leq 4$\\
        \midrule
        $x_0^4=\hat{x}_1^1\hat{x}_1^2\hat{x}_0^4(\hat{x}_1^1)^{-1} $& $x_0^5=\hat{x}_0^1\hat{x}_3^3\hat{x}_0^5(\hat{x}_3^3)^{-1}(\hat{x}_0^1)^{-1}$\\
        $x_1^4=\hat{x}_1^1\hat{x}_1^2\hat{x}_1^4(\hat{x}_0^1)^{-1}$ & $x_1^5=\hat{x}_0^1\hat{x}_3^3\hat{x}_1^5(\hat{x}_3^3)^{-1}(\hat{x}_0^1)^{-1}$\\
        $x_{2i}^4=\hat{x}_1^1\hat{x}_1^2x_{2i}^4(\hat{x}_3^3)^{-1}(\hat{x}_0^1)^{-1} \text{ for } 2\leq i \leq 4$ & $x_{2i}^5=\hat{x}_0^1\hat{x}_3^3\hat{x}_{2i}^5(\hat{x}_0^1)^{-1} \text{ for } 2\leq i \leq 4$\\
        $x_{2i+1}^5=\hat{x}_0^1\hat{x}_3^3\hat{x}_{2i+1}^5(\hat{x}_1^2)^{-1}(\hat{x}_1^1)^{-1} \text{ for } 2\leq i \leq 4$ & $x_{2i+1}^5=\hat{x}_0^1\hat{x}_3^3\hat{x}_{2i+1}^5(\hat{x}_1^2)^{-1}(\hat{x}_1^1)^{-1} \text{ for } 2\leq i \leq 4$\\
        \bottomrule
        \end{tabular}
    \end{center}
    
\end{example}

This results in a set of generators for the fundamental group of the covering space of the 1-skeleton union the claw.
To get the fundamental group of the cover $\Sigma_g'$, we need to add relations.
The triangles we add to collapse the claw that sit above the path lifts
$\{\gamma_2,\gamma_3, \gamma_4, \gamma_5\}=\{\pathelement{x}_1^1, \pathelement{x}_0^1, \pathelement{x}_1^1\pathelement{x}_1^2,  \pathelement{x}_0^1\pathelement{x}_3^3\}$ introduce relations  $\{x_1^1=1, x_0^1=1, x_1^1x_1^2=1,  x_0^1x_3^3=1\}$ which we can clearly replace, after cancellation, by  $\{x_1^1=1, x_0^1=1, x_1^2=1,  x_3^3=1\}$; this is just the list of edges in the tree $T$ described in the proof of \Cref{RS.alg} part (2). 

To get the fundamental group of the punctured surface cover $\Sigma_g'$, it remains to add the relations corresponding to lifts of the single relation $w=x_0\dots x_9=1$ in $\pi_1(\sphere{2}-\{p_0,\dots,p_9\})$, starting at each of the 5 lifts $P_1$, $P_2$, $P_3$, $P_4$,and $P_5$ of the basepoint $P$ on $\sphere{2}$. Each such lift is a product of path-lifts $\pathelement{x}_i^j$ of the $x_i$.
Recall that $\pathelement{x}_i^j$ begins at $P_{\rho(x_i)}$ and ends at $P_{\rho(x_i)(j)}$.
Therefore, to automate the process of determining the lifts, we look at the consecutive action on the fiber over the basepoint $P$, namely the points $\{P_1, P_2, P_3, P_4,P_5\}$, of the permutations given by the map $\rho(x_i)$ for each $x_i$ in $w$.
The numbers $i$ in the columns below refer to the subscripts of the $P_i$ at the endpoints of the paths $\pathelement{x}_i^j$.

\begin{center}
    \begin{tikzcd}[row sep=0.12cm, column sep=1.00cm]
        1 \arrow[r, "\rho (x_{0})", bend left] \arrow[r,swap, "\pathelement{x}_0^1"]& 3 \arrow[r, "\rho(x_1)", bend left] \arrow[r,swap, "\pathelement{x}_1^3"]& 1 \arrow[r, "\rho(x_2)", bend left] \arrow[r,swap, "\pathelement{x}_2^1"]& 1 \arrow[r, "\rho(x_3)",bend left]\arrow[r,swap, "\pathelement{x}_3^1"] & 1 \arrow[r, "\rho(x_4)", bend left] \arrow[r,swap, "\pathelement{x}_4^1"]& 1 \arrow[r, "\rho(x_5)", bend left]\arrow[r,swap, "\pathelement{x}_5^1"] & 1 \arrow[r, "\rho(x_6)", bend left] \arrow[r,swap, "\pathelement{x}_6^1"]& 1 \arrow[r, "\rho(x_7)", bend left] \arrow[r,swap, "\pathelement{x}_7^1"]& 1 \arrow[r, "\rho(x_8)", bend left]\arrow[r,swap, "\pathelement{x}_8^1"]& 1 \arrow[r, "\rho(x_9)", bend left]\arrow[r,swap, "\pathelement{x}_9^1"] & 1 \\
        2 \arrow[r,swap, "\pathelement{x}_0^2"] & 1 \arrow[r,swap, "\pathelement{x}_1^1"]& 2 \arrow[r,swap, "\pathelement{x}_2^2"] & 4 \arrow[r,swap, "\pathelement{x}_3^4"] & 2 \arrow[r,swap, "\pathelement{x}_4^2"] & 4 \arrow[r,swap, "\pathelement{x}_5^4"] & 2 \arrow[r,swap, "\pathelement{x}_6^2"] & 4 \arrow[r,swap, "\pathelement{x}_7^4"] & 2 \arrow[r,swap, "\pathelement{x}_8^2"] & 4 \arrow[r,swap, "\pathelement{x}_9^4"] & 2 \\
        3 \arrow[r,swap, "\pathelement{x}_0^3"] & 4 \arrow[r,swap, "\pathelement{x}_1^4"]& 3 \arrow[r,swap, "\pathelement{x}_2^3"] & 2 \arrow[r,swap, "\pathelement{x}_3^2"] & 3 \arrow[r,swap, "\pathelement{x}_4^3"] & 2 \arrow[r,swap, "\pathelement{x}_5^2"] & 3 \arrow[r,swap, "\pathelement{x}_6^3"] & 2 \arrow[r,swap, "\pathelement{x}_7^2"] & 3\arrow[r,swap, "\pathelement{x}_8^3"] & 2\arrow[r,swap, "\pathelement{x}_9^2"] & 3\\
        4 \arrow[r,swap, "\pathelement{x}_0^4"] & 2 \arrow[r,swap, "\pathelement{x}_1^2"] & 4 \arrow[r,swap, "\pathelement{x}_2^4"]& 5 \arrow[r,swap, "\pathelement{x}_3^5"] & 4 \arrow[r,swap, "\pathelement{x}_4^4"] & 5 \arrow[r,swap, "\pathelement{x}_5^5"] & 4 \arrow[r,swap, "\pathelement{x}_6^4"] & 5 \arrow[r,swap, "\pathelement{x}_7^5"] & 4\arrow[r,swap, "\pathelement{x}_8^4"] & 5\arrow[r,swap, "\pathelement{x}_9^5"] & 4\\
        5 \arrow[r,swap, "\pathelement{x}_0^5"]& 5 \arrow[r,swap, "\pathelement{x}_1^5"]& 5 \arrow[r,swap, "\pathelement{x}_2^5"] & 3 \arrow[r,swap, "\pathelement{x}_3^3"] & 5 \arrow[r,swap, "\pathelement{x}_4^5"] & 3 \arrow[r,swap, "\pathelement{x}_5^3"] & 5 \arrow[r,swap, "\pathelement{x}_6^5"] & 3 \arrow[r,swap, "\pathelement{x}_7^3"] & 5  \arrow[r,swap, "\pathelement{x}_8^5"] & 3 \arrow[r,swap, "\pathelement{x}_9^3"] & 5\\
    \end{tikzcd}
\end{center}

Note that each $x_i$ in $w$ appears with positive exponent, so the consecutive action of $\rho$ matches the superscripts of the lifts in the relations $w_i$.  Note that since each lift of $w$ is a closed loop, the indices at the beginning and end of each line must be equal.

Therefore, following the process in \Cref{RS.alg} part (3), we can replace $\hat{x}_i^j$ with $x_i^j$ after adding the claw relations, and the relation $w=x_0\dots x_9$ lifts to the relations below: 
    		
$$w_1=x_0^1x_1^3x_2^1x_3^1x_4^1x_5^1x_6^1x_7^1x_8^1x_9^1$$ 
$$w_2=x_0^2x_1^1x_2^2x_3^4x_4^2x_5^4x_6^2x_7^4x_8^2x_9^4$$ 
$$w_3=x_0^3x_1^4x_2^3x_3^2x_4^3x_5^2x_6^3x_7^2x_8^3x_9^2$$ 
$$w_4=x_0^4x_1^2x_2^4x_3^5x_4^4x_5^5x_6^4x_7^5x_8^4x_9^5$$ 
$$w_5=x_0^5x_1^5x_2^5x_3^3x_4^5x_5^3x_6^5x_7^3x_8^5x_9^3.$$

To get the presentation for the branched cover, we add lifts of the relation $x_i^k=1$, for each disjoint $k$-cycle in the permutation $\rho(x_i)$, following step (4) of \Cref{RS.alg}. This corresponds to filling in meridians of the preimages of each of the points $p_0 \dots p_9$. Note that the number of preimages of $p_i$ is equal to the number of disjoint cycles in the permutation $\rho(x_i)$, which in this example is 2 for all $i$, so we get a total of 20 relations, listed below:

$$x_0^1x_0^3x_0^4x_0^2=1, x_0^5=1$$
$$x_1^1x_1^2x_1^4x_1^3=1, x_1^5=1$$
$$x_{2i}^1x_{2i}^2x_{2i}^4x_{2i}^5x_{2i}^3=1, x_{2i+1}^1=1 \text{ for }1\leq i \leq 4$$ $$x_{2i+1}^2x_{2i+1}^3x_{2i+1}^5x_{2i+1}^4=1, x_{2i+1}^1=1  \text{ for }1\leq i \leq 4.$$

Using Sage to simplify the presentation for $\pi_1(\Sigma,P_1)$ obtained by adding the relations $w_1,\dots, w_5$, the claw relations, and the 20 relations listed directly above, one can check that this is the group of a genus 11 surface, as noted in the trisection parameters for the $p=5$ case in \Cref{stevedore.tab}.

\begin{example}[The spun trefoil $\tau T(2,3)$]
    \label{spun_trefoil_group_trisection.ex}
    Let $K$ be the standard 4-bridge trisection of the spun trefoil knot with the coloring pictured in \Cref{fig:spun_trefoil_triplane_diagram_3_coloring}.
    We will compute the group trisection of the corresponding branched cover using \Cref{RS.alg}.\footnote{Details of the computation can be found in \texttt{spun\_trefoil\_detail.ipynb} of \cite{Ruppik_Knot_groups_2021}.}
    As before, we use $x_0\dots, x_7$ to denote the meridians of the punctures on the bridge sphere, labelled from left to right, to obtain the following presentation.
	
    \begin{align*}
        \pi_1(\sphere{2} - \{8 \textrm{ pts.}\})
        \cong
        \langle 
        x_{0}, x_{1}, \ldots, x_{7}
        \mid
        x_{0} x_{1} \cdots x_{7}
        \rangle
    \end{align*}
    
    The coloring in \Cref{fig:spun_trefoil_triplane_diagram_3_coloring} corresponds to the monodromy homomorphism:
    \begin{nalign}
        \label{eqn:rho_example_homomorphism}
        \rho \colon \pi_1(\sphere{2} - \{ p_{0}, p_{1}, \ldots, p_{7} \})
        & \rightarrow
        S_{3} \\
        x_{0}, x_{1} & \mapsto (1,2) \\
        x_{2}, x_{3}, x_{4}, x_{5}, x_{6}, x_{7} & \mapsto (1,3) \\
    \end{nalign}
    
    
    We now follow the steps in \Cref{RS.alg}.
    
    {\it Step 1:} The cover has 3 sheets. In the branched cover of the bridge sphere upstairs we denote our generators as
    \begin{equation*}
        \langle
        x_{0}^{1}, x_{1}^{1}, x_{2}^{1}, \ldots , x_{7}^{1}, 
        x_0^{2}, x_{1}^{2}, x_{2}^{2}, \ldots, 
        x_{0}^{3}, x_{1}^{3}, x_{2}^{3}, \ldots,
        x_{7}^{3}
        \rangle
    \end{equation*}

    {\it Step 2:} We see that a maximal tree in the lifted 1-skeleton is given by the edges $x_0^1, x_2^1$, so we use $x_0^1=1$ and $x_2^1=1$ as the claw relations.

    {\it Step 3:}  We first lift the relation for the bridge sphere.
    
    The lifts of the relation $w=x_0\dots x_7$ are: 
    $$ w_1=x_0^1x_1^2x_2^1x_3^3x_4^1x_5^3x_6^1x_7^3,  w_2=x_0^2x_1^1x_2^2x_3^2x_4^2x_5^2x_6^2x_7^2,  w_3=x_0^3x_1^3x_2^3x_3^1x_4^3x_5^1x_6^3x_7^1$$

    obtained by the method from \Cref{stevedore_RS_example.ex}, hence the presentation of the punctured surface upstairs is:

    \begin{equation}
	\label{unbranchedsurface_spun_trefoil_pi_1.eqn}
        \pi_1(\Sigma') = 
        \langle
        x_{0}^{1}, x_{1}^{1}, x_{2}^{1},
        \ldots
        x_{7}^{1}, x_0^{2},
        x_{1}^{2}, x_{2}^{2}, \ldots
        x_{0}^{3}, x_{1}^{3}, x_{2}^{3},
        \ldots,
        x_{7}^{3} \ \mid  
        x_0^1, x_2^1, w_1,w_2,w_3
        \rangle
    \end{equation}
    
   To obtain a presentation for the fundamental group of the unpunctured central surface in the branched cover, we add relations to close the small loops around the punctures, giving us the presentation:
    \begin{align}
        \label{surface_spun_trefoil_pi_1.eqn}
        \pi_1(\Sigma) =	\langle
            x_{0}^{1}, 
             \ldots  x_{7}^{1}, x_0^{2},
             \ldots
            x_{0}^{3}, 
            \ldots,
            x_{7}^{3}
            \mid
            & x_0^1, x_2^1, w_1, w_2, w_3, \\
            & x_0^1 x_0^2, x_0^3, x_1^1 x_1^2, x_1^3, x_2^1 x_2^3 , x_2^2, x_3^1x_3^3, x_3^2, \dots, x_7^1 x_7^3, x_7^2
        \rangle 
    \end{align}
    
    One can check that this simplifies to the presentation below, which is the fundamental group of a genus 2 surface:

    $$\pi_1(\Sigma) = \langle x_3^1, x_4^1, x_5^1, x_6^1 \mid (x_3^1)^{-1}x_4^1(x_5^1)^{-1}x_6^1x_3^1(x_4^1)^{-1}x_5^1(x_6^1)^{-1} \rangle.$$

    \vskip.05in
    
    Now we focus on the given tangles with boundary
    $\partial (\disk{3}, T_{\mu})
    = (\sphere{2}, \{ p_0, \ldots, p_{7} \})$.
    The map on fundamental groups induced from the boundary inclusion of the punctured sphere into the tangle complement is the quotient map given by introducing new relations, one for each tangle strand.
    Following the procedure for computing the group of a trivial tangle complement in \Cref{Wirtinger.sec}, we obtain the presentations

    \begin{equation*} 
        G_{\alpha}
        =
        \pi_1(\mathbb{D}^3 - T_{\alpha})
        = 
        \langle 
        x_{0}, x_{1}, \ldots, x_{7}
        \mid
        x_0 x_1 x_2 x_1 x_2 x_1^{-1} x_2^{-1} x_1^{-1},\\ \  x_1 x_2 x_1 x_2^{-1} x_1^{-1} x_7, \\ \  x_3x_6, \\ \  x_4x_5
        \rangle
    \end{equation*}

    \begin{equation*}	
        G_{\beta}
        =
        \pi_1(\mathbb{D}^3 - T_{\beta}) =
        \langle
            x_{0}, x_{1}, \ldots, x_{7}
        \mid
        x_0 x_1 x_2 x_1 x_2 x_1^{-1} x_2^{-1} x_1^{-1}, \\ \  x_1 x_2 x_1 x_2^{-1} x_1^{-1} x_5, \\  \  x_3x_4, \\ \  x_6x_7
        \rangle
    \end{equation*}
    
    \begin{equation*}	
        G_{\gamma}=\pi_1(\mathbb{D}^3 - T_{\gamma})=
            \langle
            x_{0}, x_{1}, \ldots, x_{7}
        \mid
        x_0 x_1 x_4 x_1 x_4 x_1^{-1} x_4^{-1} x_1^{-1},\\ \  x_1 x_4 x_1 x_4^{-1} x_1^{-1} x_7, \\  \ x_2x_3, \\ \  x_5x_6
            \rangle
    \end{equation*}

    Recall from \Cref{Wirtinger.sec} that the surface relation $x_0 \dots x_7 = 1$ is automatically implied by the relations for $G_\mu$ for each $\mu\in\{\alpha,\beta,\gamma\}$.

    Let us concentrate on $G_{\alpha}$ and carefully implement the algorithm for computing $\widetilde{G}_{\alpha}$, the fundamental group of the unbranched cover of $\disk{3} - T_\alpha$.
    
    We need to lift each of the 4 relations above, starting at each of the 3 lifts $P_1$, $P_2$, and $P_3$ of the basepoint $P$ on $\sphere{2}$.
    We will use the same process described in \Cref{stevedore_RS_example.ex}, though in this case, the $x_i$ appear in the original relations with both positive and negative exponents.

    We again look at the consecutive action of the corresponding permutations given by the map $\rho$ in \Cref{eqn:rho_example_homomorphism}.

    \begin{center}
        \begin{tikzcd}[row sep=0.12cm, column sep=1.00cm]
            1 \arrow[r, "\rho (x_{0})"] & 2 \arrow[r, "\rho(x_1)"] & 1 \arrow[r, "\rho(x_2)"] & 3 \arrow[r, "\rho(x_1)"] & 3 \arrow[r, "\rho(x_2)"] & 1 \arrow[r, "\rho(x_1^{-1})"] & 2 \arrow[r, "\rho(x_2^{-1})"] & 2 \arrow[r, "\rho(x_1^{-1})"] & 1 \\
            2 \arrow{r} & 1 \arrow{r} & 2 \arrow{r} & 2 \arrow{r} & 1 \arrow{r} & 3 \arrow{r} & 3 \arrow{r} & 1 \arrow{r} & 2 \\
            3 \arrow{r} & 3 \arrow{r} & 3 \arrow{r} & 1 \arrow{r} & 2 \arrow{r} & 3 \arrow{r} & 1 \arrow{r} & 3 \arrow{r} & 3
        \end{tikzcd}
    \end{center}
     
    Note that when we write the word, the algorithm (just from requiring the lift to be continuous) tells us that generators with power $+1$ get the superscript to the left of the arrow underneath it, while the generators with power $-1$ get the superscript to the right of the arrow underneath it.
    Note again that since this lift is a closed loop, the index at the beginning and end of each line has to be equal.

    The first relation 
    \begin{equation*} 
        x_0 x_1 x_2 x_1 x_2 x_1^{-1} x_2^{-1} x_1^{-1}
    \end{equation*}
    will therefore lift to the three relations
    \begin{equation*}
        x_0^1 x_1^2 x_2^1 x_1^3 x_2^3 (x_1^2)^{-1} (x_2^2)^{-1}(x_1^1)^{-1},
    \end{equation*}
    
    \begin{equation*}
        x_0^2 x_1^1 x_2^2 x_1^2 x_2^1 (x_1^3)^{-1} (x_2^1)^{-1}(x_1^2)^{-1},
    \end{equation*}
    
    \begin{equation*}
        x_0^3 x_1^3 x_2^3 x_1^1 x_2^2 (x_1^1)^{-1} (x_2^3)^{-1}(x_1^3)^{-1},
    \end{equation*}
    where the subscript indices are determined by the sequence of generators in the word and the corresponding permutations.

    The lifts of the remaining relations for each tangle complement are shown in \Cref{tab:spun_trefoil_relation_lifts}.
    
 \setlength{\tabcolsep}{10pt} 
    \renewcommand{\arraystretch}{1.25} 
    \begin{table}[]
        \centering
        \begin{tabular}{c|c|c}
            \toprule
            Tangle group & Original Relations & Lifted Relations \\
            \midrule
            \midrule
            $\pi_1(\disk{3}-T_\alpha)$ & $x_0 x_1 x_2 x_1 x_2 x_1^{-1} x_2^{-1} x_1^{-1}$ &$x_0^1 x_1^2 x_2^1 x_1^3 x_2^3 (x_1^2)^{-1} (x_2^2)^{-1}(x_1^1)^{-1}$\\
            &&$x_0^2 x_1^1 x_2^2 x_1^2 x_2^1 (x_1^3)^{-1} (x_2^1)^{-1}(x_1^2)^{-1}$\\
            &&$x_0^3 x_1^3 x_2^3 x_1^1 x_2^2 (x_1^1)^{-1} (x_2^3)^{-1}(x_1^3)^{-1}$\\
            \hline
            &$x_1 x_2 x_1 x_2^{-1} x_1^{-1} x_7$&$x_1^1 x_2^2 x_1^2(x_2^3)^{-1} (x_1^3)^{-1} x_7^3$ \\
            &&$x_1^2 x_2^1 x_1^3 (x_2^1){-1} (x_1^2){-1} x_7^2$\\
            &&$x_1^3 x_2^3 x_1^1 (x_2^2)^{-1} (x_1^1)^{-1} x_7^1$\\
            \hline
            &$x_3x_6$&$x_3^1x_6^3$\\
            & &$x_3^2x_6^2$\\
            &&$x_3^3x_6^1$\\
            \hline
            &$x_4x_5$&$x_4^1x_5^3$\\
            && $x_4^2x_5^2$\\
            &&$x_4^3x_5^1$ \\
            \midrule
            \midrule
            $\pi_1(\disk{3}-T_\beta)$ &$x_0 x_1 x_2 x_1 x_2 x_1^{-1} x_2^{-1} x_1^{-1}$&$x_0^1 x_1^2 x_2^1 x_1^3 x_2^3 (x_1^2)^{-1} (x_2^2)^{-1}(x_1^1)^{-1}$\\
            &&$x_0^2 x_1^1 x_2^2 x_1^2 x_2^1 (x_1^3)^{-1} (x_2^1)^{-1}(x_1^2)^{-1}$\\
            &&$x_0^3 x_1^3 x_2^3 x_1^1 x_2^2 (x_1^1)^{-1} (x_2^3)^{-1}(x_1^3)^{-1}$ \\
            \hline
            &  $x_1 x_2 x_1 x_2^{-1} x_1^{-1} x_5$ &$x_1^1 x_2^2 x_1^2(x_2^3)^{-1} (x_1^3)^{-1} x_5^3$\\
            &&$x_1^2 x_2^1 x_1^3 (x_2^1){-1} (x_1^2){-1} x_5^2$\\
            &&$x_1^3 x_2^3 x_1^1 (x_2^2)^{-1} (x_1^1)^{-1} x_5^1$\\ 
            \hline
            &  $x_3x_4$& $x_3^1x_4^3$\\
            &&$x_3^2x_4^2$ \\
            &&$x_3^3x_4^1$\\ 
            \hline
            &  $x_6x_7$&$x_6^1 x_7^3$\\
            && $x_6^2x_7^2$\\
            &&$x_6^3x_7^1$\\
            \midrule
            \midrule
            $\pi_1(\disk{3}-T_\gamma)$ &$x_0 x_1 x_4 x_1 x_4 x_1^{-1} x_4^{-1} x_1^{-1}$&$x_0^1 x_1^2 x_4^1 x_1^3 x_4^3 (x_1^2)^{-1} (x_4^2)^{-1}(x_1^1)^{-1}$\\
            &&$x_0^2 x_1^1 x_4^2 x_1^2 x_4^1 (x_1^3)^{-1} (x_4^1)^{-1}(x_1^2)^{-1}$\\
            &&$x_0^3 x_1^3 x_4^3 x_1^1 x_4^2 (x_1^1)^{-1} (x_4^3)^{-1}(x_1^3)^{-1}$\\
            \hline
            & $x_1 x_4 x_1 x_4^{-1} x_1^{-1} x_7$ &$x_1^1 x_4^2 x_1^2(x_4^3)^{-1} (x_1^3)^{-1} x_7^3$\\
            &&$x_1^2 x_4^1 x_1^3 (x_4^1){-1} (x_1^2){-1} x_7^2$\\
            &&$x_1^3 x_4^3 x_1^1 (x_4^2)^{-1} (x_1^1)^{-1} x_7^1$\\  
            \hline
            &$x_2x_3$&$x_2^1x_3^3$\\
            &&$x_2^2x_3^2$\\ 
            &&$x_2^3 x_3^1$\\
            \hline
            & $x_5x_6$&    $x_5^1 x_6^3$\\
            &&$x_5^2 x_6^2$\\
            &&$x_5^3 x_6^1$ \\
            \bottomrule
        \end{tabular}
        \caption{Lifts of Wirtinger relations for the tangle complements $\disk{3}-T_\mu$ for the spun trefoil in \Cref{fig:spun_trefoil_triplane_diagram_3_coloring}. }
        \label{tab:spun_trefoil_relation_lifts}
    \end{table}

    Of course, we also have the ``claw relations'', which in this case are just 
    $x_0^1=1$ and $x_2^1=1$.

    The group of the unbranched cover simplifies to  
    $$\widetilde{G}_{\alpha} = \langle x_1^1, x_3^1, x_4^1, x_5^1, x_6^1, x_0^2, x_1^2, x_2^2, x_3^2,x_4^2 \rangle.$$
    To get the group of the branched cover, we add in the meridians of the preimage of the branch set, which are the same as the ones for the central surface.  The group of the corresponding branched cover simplifies to
    $$\pi_1(H_\alpha)=\langle x_3^1, x_4^1 \rangle. $$

   Via the same process, the group of the unbranched cover for the $\beta$-tangle simplifies to

   $$\widetilde{G}_\beta = \langle x_1^1,x_3^1,x_4^1,x_6^1,x_7^1,x_0^2, x_1^2,x_2^2,x_3^2,x_6^2 \rangle,$$

   and the corresponding handlebody group simplifies to
   $$\pi_1(H_\beta)=\langle x_3^1, x_6^1\rangle.$$

    The group of the unbranched cover for the $\gamma$-tangle simplifies to
    
    $$\widetilde{G}_\gamma=\langle  x_1^1, x_3^1, x_5^1, x_6^1, x_7^1, x_0^2, x_2^2, x_4^2, x_5^2, x_7^3 \rangle, $$
    
    and the corresponding handlebody group is
    
    $$\pi_1(H_\gamma)=\langle x_4^1,x_5^1 \rangle. $$
    
    The group trisection for the branched cover of the spun trefoil is therefore obtained by taking push-outs of the following tripod:
    
    \begin{equation}
        \label{spun_trefoil_tripod.eq}
        \begin{tikzcd}
            & \pi_1(H_\alpha) = \langle x_3^1, x_4^1\rangle     \\
            \pi_1(\Sigma) = \langle x_3^1,x_4^1,x_5^1,x_6^1
            \mid
            (x_3^1)^{-1}x_4^1(x_5^1)^{-1}x_6^1x_3^1(x_4^1)^{-1}x_5^1(x_6^1)^{-1}
            \rangle
            \arrow[ru, two heads, "\iota_\alpha"]
            \arrow[r, two heads, "\iota_\beta"]
            \arrow[rd, two heads, "\iota_\gamma", swap] &
            \pi_1(H_\beta)=\langle x_3^1, x_6^1 \rangle  \\
            & \pi_1(H_\gamma)=\langle x_4^1,x_5^1 \rangle, \\         
        \end{tikzcd}
    \end{equation}
    
    and by examining the unsimplified relations for each tangle in \autoref{tab:spun_trefoil_relation_lifts}, one sees the inclusion maps are defined as follows:

    \begin{equation}
        \label{spun_trefoil_inclusions.eq}
        \begin{cases}
            \iota_\alpha(\loopelement{x}_3^1)&=\iota_\alpha(\loopelement{x}_6^1)=\loopelement{x}_3^1\quad \iota_\alpha(\loopelement{x}_4^1)=\iota_\alpha(\loopelement{x}_5^1)=\loopelement{x}_4^1\\
            \iota_\beta(\loopelement{x}_3^1)&=\iota_\beta(\loopelement{x}_4^1)=\loopelement{x}_3^1\quad\iota_\beta(\loopelement{x}_5^1)=0\quad \iota_\beta(\loopelement{x}_6^1)=\loopelement{x}_6^1\\
            \iota_\gamma(\loopelement{x}_5^1)&=\iota_\gamma(\loopelement{x}_6^1)=\loopelement{x}_5^1\quad\iota_\gamma(\loopelement{x}_3^1)=0\quad \iota_\gamma(\loopelement{x}_4^1)=\loopelement{x}_4^1\quad
        \end{cases}
    \end{equation}
    Note that the ranks of the surface and handlebody groups are consistent with the fact that the  branched cover of the sphere with given $\rho$ is a genus two surface, and branched covers over the tangles are genus two handlebodies. By computing pushouts, we see the pairwise unions of handlebodies are simply-connected, and hence the branched cover $X$ of $\sphere{4}$ along the spun trefoil is simply-connected, and the covering trisection parameters are therefore [2, [0, 0, 0]]. Note that an analysis of the trisection diagram of the cover by the method in \cite{cahn2017singular} shows that the cover is diffeomorphic to $\sphere{2}\times \sphere{2}$ \cite{blair2019note}.

\end{example}

\section{Computing Homology and the Intersection Form}
\label{homology:sec}

In this section, we will present a method for computing the homology groups and intersection form of a 4-manifold constructed by branched covering a colored bridge trisected surface.
Specifically, we will provide an algorithm that takes as input a colored tri-plane diagram and returns the homology and intersection form as its output.
This section can be understood as a bridge-trisected analog of the computations of homology groups of trisected manifolds in Feller-Klug-Schirmer-Zemke \cite{feller2018homology}.

\subsection{Computing homology groups from tri-plane data}

We begin with the computation of the homology groups of the branched cover, which combines \Cref{RS.alg} with the work of \cite{feller2018homology}.

\begin{algorithm} 
    \label{homology.alg}
    Let $\rho \colon \pi_1(\sphere{2} - \{2b\; pts.\}) \rightarrow S_n$ be a group homomorphism that factors to a colored group trisection corresponding to a colored bridge trisected surfaces $F \subset \sphere{4}$ as in \Cref{coloredgptrisection.def}.
    Let $X$ denote the corresponding branched cover.
    The homology groups $H_{\ast}(X; \mathbb{Z})$ can be computed as follows.

    \begin{enumerate}
        \item Abelianize the group trisection in \Cref{RS.alg} to obtain a basis for $H_1(\Sigma)$, $H_1(H_\mu)$ for $\mu\in\{\alpha,\beta,\gamma\}$, and $H_1(X)$, in terms of the generators $\loopelement{x}_i^j$ from \Cref{RS.alg}.
        
        \item Determine a basis for each Lagrangian $L_\epsilon=\text{ker}(\iota_\mu): H_1(\Sigma)\rightarrow H_1(H_\mu)$ in terms of the generators $\loopelement{x}_i^j$ from \Cref{RS.alg}.
        
        \item From \cite[Theorem 2.6]{feller2018homology}, we have 
        \[
            H_2 = \dfrac{L_\gamma \cap (L_\alpha+L_\beta)}{L_\gamma \cap L_\alpha +L_\gamma \cap L_\beta},
        \]
        
        \item From \cite[Corollary 2.4]{feller2018homology}, we have
        \[
            H_3= L_\alpha\cap L_\beta \cap L_\gamma
        \]
    \end{enumerate}
\end{algorithm}

\begin{example}\label{spun_trefoil_homology.ex}
    In this section, we detail the computation of the homology groups for the coloring of the trisected spun trefoil shown in \Cref{fig:spun_trefoil_triplane_diagram_3_coloring}.\footnote{Details of the computation can be found in \texttt{spun\_trefoil\_detail.ipynb} of \cite{Ruppik_Knot_groups_2021}.}

    Abelianizing the presentation for $\pi_1(\Sigma)$ in \Cref{spun_trefoil_group_trisection.ex}, we determine that
    \[
        H_1(\Sigma)
        =
        \mathbb{Z}[\loopelement{x}_3^1, \loopelement{x}_4^1, \loopelement{x}_5^1, \loopelement{x}_6^1],
    \]
    reflecting the fact that $\Sigma$ is a closed, orientable surface of genus 2. 
    
    Similarly, from the results of \Cref{spun_trefoil_group_trisection.ex}, the homology groups of the handlebodies are

    \begin{align*}
        H_1(H_\alpha) &= \mathbb{Z}[\loopelement{x}_3^1,\loopelement{x}_4^1] \\
        H_1(H_\beta) &= \mathbb{Z}[\loopelement{x}_3^1,\loopelement{x}_6^1] \\
        H_1(H_\gamma) &= \mathbb{Z}[\loopelement{x}_4^1,\loopelement{x}_5^1], \\
    \end{align*}
    
    with inclusion maps $\iota_\mu \colon H_1(\Sigma)\rightarrow H_1(H_\mu)$ defined on the above basis for $H_1(\Sigma)$ as in \Cref{spun_trefoil_inclusions.eq}.

    Next, we compute each Lagrangian $L_\mu$, which is the kernel of the map $\iota_\mu$:
    
    $$L_\alpha=\mathbb{Z}[\loopelement{x}_3^1-\loopelement{x}_6^1, \loopelement{x}_4^1-\loopelement{x}_5^1]$$
    $$L_\beta=\mathbb{Z}[\loopelement{x}_3^1-\loopelement{x}_4^1, \loopelement{x}_5^1]$$
    $$L_\gamma=\mathbb{Z}[\loopelement{x}_3^1, \loopelement{x}_5^1-\loopelement{x}_6^1]$$

    The ingredients for the computation of $H_2(X)$ are as follows:
    
    $$L_\alpha+L_\beta=\mathbb{Z}[\loopelement{x}_3^1,\loopelement{x}_4^1,\loopelement{x}_5^1,\loopelement{x}_6^1]$$
    $$L_\gamma\cap(L_\alpha+L_\beta)=\mathbb{Z}[\loopelement{x}_3^1,\loopelement{x}_5^1-\loopelement{x}_6^1]$$
    $$L_\gamma \cap L_\alpha= \mathbf{0}$$
    $$L_\gamma \cap L_\beta= \mathbf{0}$$
    
    Since
    \[
        H_2 = \dfrac{L_\gamma \cap (L_\alpha+L_\beta)}{L_\gamma \cap L_\alpha +L_\gamma \cap L_\beta},
    \]
    
    we have
    
    \[
        H_2(X)=\mathbb{Z}[\loopelement{x}_3^1,\loopelement{x}_5^1-\loopelement{x}_6^1].
    \]

\end{example}

\subsection{Computing the intersection form from tri-plane data.}

We next compute the intersection form on $H_2(X;\mathbb{Z})$ with respect to the basis of $H_2(X;\mathbb{Z})$ found in \Cref{homology.alg}.

From \cite[Thm. 2.6]{feller2018homology}, the intersection form on
\[
    H_2 = \dfrac{L_\gamma \cap (L_\alpha+L_\beta)}{L_\gamma \cap L_\alpha +L_\gamma \cap L_\beta},
\]
is given by
$\Phi \colon L_\gamma \cap (L_\alpha + L_\beta) \times L_\gamma \cap (L_\alpha + L_\beta) \rightarrow \mathbb{Z}$
given by
\[
    \Phi(x,y) = \langle x', y \rangle_\Sigma
\]
where $x'$ is any element of $L_\alpha$ which satisfies $x-x'\in L_\beta$, and $\langle \cdot , \cdot \rangle_\Sigma$ is the algebraic intersection form on $H_1(\Sigma;\mathbb{Z})$.

Therefore, to compute the intersection form on $H_2(X)$, it suffices to compute the intersection form on $H_1(\Sigma)$.  Viewing $x'$ and $y$ above as elements of $H_1(\Sigma)$, we can write each as concatenations of the path-lifts $\pathelement{x}_i^j$ of the generators $x_i$ of $\pi_1(\sphere{2}-\{2b\;pts\})$, as the corresponding loops $\loopelement{x}_i^j$ generate $H_1(\Sigma)$.  Two such paths $\pathelement{x}_i^j$ and $\pathelement{x}_k^l$ are disjoint outside small neighborhoods $U_1,\dots, U_n$ of the lifts $P_1,\dots,P_n$ of the basepoint $P \in \sphere{2}-\{2b\;pts\}$, unless $\pathelement{x}_i^j$ and $\pathelement{x}_k^l$ are either the same path or are path inverses of each other.  Therefore we can compute the algebraic intersection number of $x'$ and $y$ by examining how the path-lifts $\pathelement{x}_i^j$ of $x_i$ intersect each of the neighborhoods $U_i$ of $P_i$.  We record this information in a {\it configuration triple} (see \Cref{config_triple.def} and \Cref{linkpairneighborhood.fig}). We formalize this process and the computation of the intersection form on $H_2(X)$ in \Cref{intersectionform.alg} below.

\begin{algorithm}
    \label{intersectionform.alg}
    Let $\rho \colon \pi_1(\sphere{2}-\{2b\; pts.\})\rightarrow S_n$ be a group homomorphism that factors to a colored group trisection corresponding to a colored bridge trisected surfaces $F\subset \mathbb{S}^4$ as in \Cref{coloredgptrisection.def}.
    Let $X$ denote the corresponding branched cover.
    Let $x$ and $y$ be elements of $L_\gamma \cap (L_\alpha + L_\beta)$.
    Then the intersection number $\Phi(x,y)$ is given by the algorithm below.

    \begin{enumerate}
        \item Choose $x'\in L_\alpha$ such that $x-x'\in L_\beta$.
        \item Choose representatives of $x'$ and $y$ which are lifts of words in the generators $\{x_0,\dots, x_{2b-1}\}$ of $\pi_1(\sphere{2}-\{2b\;pts.\})$, so that each is a concatenation of path lifts $\pathelement{x}_i^j$ of the $x_i$ and their inverses:
        
        \[
            x'=[(\pathelement{x}_{a_1}^{b_1})^{\epsilon_1}\dots (\pathelement{x}_{a_k}^{b_k})^{\epsilon_k}]
        \]
        \[
            y=[(\pathelement{x}_{c_1}^{d_1})^{\delta_1}\dots (\pathelement{x}_{c_l}^{d_l})^{\delta_l}],
            \text{with }\epsilon_i ,\delta_i\in \{-1,1\}
        \]
        
        \item For each cyclically adjacent pair of paths $(\pathelement{x}_{a_s}^{b_s})^{\epsilon_s}(\pathelement{x}_{a_{s+1}}^{b_{s+1}})^{\epsilon_{s+1}}$  (respectively $(\pathelement{x}_{c_t}^{d_t})^{\delta_t}(\pathelement{x}_{c_{t+1}}^{d_{t+1}})^{\delta_{t+1}}$) in the above representative of $x'$ (respectively $y$), write down its corresponding \emph{configuration triple} $\tau(x',s)$ (respectively $\tau(y,t)$; see \Cref{config_triple.def} and \Cref{linkpairneighborhood.fig}.
        
        \item For each pair of configuration triples for the above representatives of $x'$ and $y$ with the same first coordinate $i$ (in other words, for each pair of consecutive paths passing through the same neighborhood $U_i$), compute the local intersection number of $x'$ and $y$ in $U_i$ according to \Cref{fig:int_form_4_endpts,fig:int_form_3_endpts,fig:int_form_2_endpts}.
        
        \item Compute the total intersection number of $x'$ and $y$ by adding the local intersection numbers above.
    \end{enumerate}
\end{algorithm}

\begin{proof}
Let $x$, $y\in H_2(X)$.  Our goal is to compute $\Phi(x,y)$.
Following step (1), we choose $x'\in L_\alpha$ such that $x-x'\in L_\beta$.  
Following step (2), write (representatives of) $x'$ and $y$ as lifts of words in the generators $\{x_0,\dots, x_{2b-1}\}$ of $\pi_1(\sphere{2}-\{2b \textrm{ pts.} \})$, so that each is a concatenation of path lifts $\pathelement{x}_i^j$ of the $x_i$ and their inverses:

\[
    x' = [(\pathelement{x}_{a_1}^{b_1})^{\epsilon_1} \dots (\pathelement{x}_{a_k}^{b_k})^{\epsilon_k}] \text{ with }\epsilon_i \in \{-1,1\}
\]
\[
    y = [(\pathelement{x}_{c_1}^{d_1})^{\delta_1} \dots (\pathelement{x}_{c_l}^{d_l})^{\delta_l}]
    \text{ with }\delta_i\in \{-1,1\}.
\]
Now we show that steps (3)-(5) compute $\langle x',y \rangle_\Sigma$.
The approach is to count intersection points of (representatives of) these classes inside a lift of a neighborhood $U$ of the basepoint $P$ on $\sphere{2}$. 

First choose an evenly covered neighborhood $U$ of $P$, with $U$ homeomorphic to an open disk, where its lift $\widetilde{U} = U_1 \sqcup \dots \sqcup U_n$, with $U_i$ a neighborhood of $P_i$.

\begin{figure}[htbp]
    \begin{tikzpicture}
        \node[anchor=south west,inner sep=0] at (0,0){\includegraphics[width=10cm]{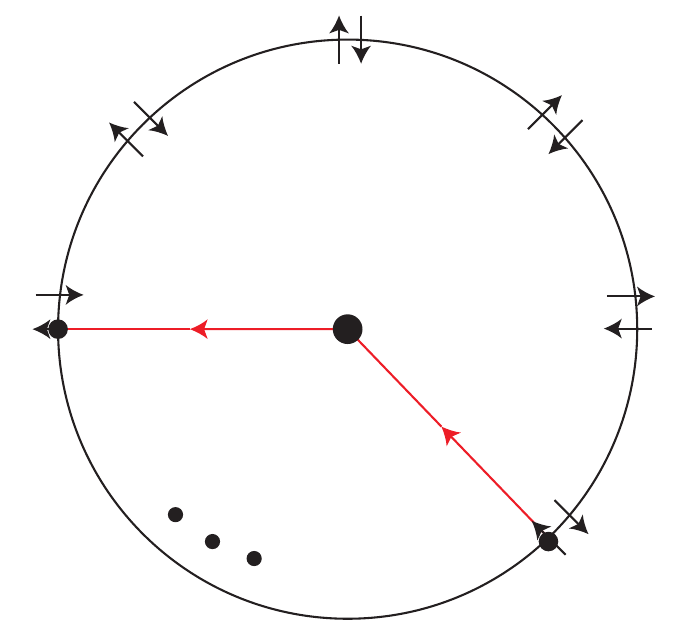}};
        \node at (5, 4) {\textcolor{black}{$P_{i}$}};
        \node at (1, 1) {\textcolor{black}{$U_{i}$}};
        \node at (4.5, 9.3) {\textcolor{violet}{$\pathelement{x}_0^i$}};
        \node at (6.3, 9.3) {\textcolor{violet}{$\pathelement{x}_0^{\rho(x_0)^{-1}(i)}$}};
        \node at (4.5, 8.3) {\textcolor{blue}{$0$}};
        \node at (5.6, 8.3) {\textcolor{blue}{$1$}};
        \node at (7.7, 8.4) {\textcolor{violet}{$\pathelement{x}_1^i$}};
        \node at (9.3, 7.4) {\textcolor{violet}{$\pathelement{x}_1^{\rho(x_1)^{-1}(i)}$}};
        \node at (7.3, 7.6) {\textcolor{blue}{$2$}};
        \node at (8.2, 6.7) {\textcolor{blue}{$3$}};
        \node at (9.6, 5.4) {\textcolor{violet}{$\pathelement{x}_2^i$}};
        \node at (10.2, 4.3) {\textcolor{violet}{$\pathelement{x}_2^{\rho(x_2)^{-1}(i)}$}};
        \node at (8.7, 5.4) {\textcolor{blue}{$4$}};
        \node at (8.7, 4.3) {\textcolor{blue}{$5$}};
        \node at (1, 7.3) {\textcolor{violet}{$\pathelement{x}_{2b-1}^i$}};
        \node at (2, 8.4) {\textcolor{violet}{$\pathelement{x}_{2b-1}^{\rho(x_{2b-1})^{-1}(i)}$}};
        \node at (3, 7.6) {\textcolor{blue}{$4b-1$}};
        \node at (2.4, 6.7) {\textcolor{blue}{$4b-2$}};
        \node at (7.5, 1.6){\textcolor{blue}{$j$}};
        \node at (1.4, 4.3){\textcolor{blue}{$k$}};
        \node at (9,.5) {Entry location $j$};
        \node at (-1,4.3) {Exit location $k$};
        \node at (5,5.2) {Fiber $i$};
        \node at (7.4, 3) {\textcolor{red}{$(\pathelement{x}_{a_s}^{b_s})^{\epsilon_s}$}};
        \node at (3,5) {\textcolor{red}{$(\pathelement{x}_{a_{s+1}}^{b_{s+1}})^{\epsilon_{s+1}}$}};
    \end{tikzpicture}
    \caption{
        A pair of consecutive paths $(\pathelement{x}_{a_s}^{b_s})^{\epsilon_s} (\pathelement{x}_{a_{s+1}}^{b_{s+1}})^{\epsilon_{s+1}}$ occuring in a choice of representative of $x'$, with configuration triple $\tau(x',s)=(i,j,k).$ Entry and exit locations are labelled clockwise around $\partial U_i$ in blue.
        \label{linkpairneighborhood.fig}
    }
\end{figure}

\begin{definition}
    \label{config_triple.def}
    For each pair of consecutive paths $(\pathelement{x}_{a_s}^{b_s})^{\epsilon_s}(\pathelement{x}_{a_{s+1}}^{b_{s+1}})^{\epsilon_{s+1}}$ in $x'$ (and similarly for $y$), we write down a corresponding \emph{configuration triple} $\tau(x',s)=(i,j,k)$ where $(i,j,k)$ are defined below.
    Loosely speaking, this triple encodes which neighborhood $U_i$ this pair of paths passes through, as well as the positions $j$ and $k$ on the boundary of $U_i$ where pair of paths enters and exits $U_i$, respectively.  
    See \Cref{linkpairneighborhood.fig}. The precise definitions of $i$, $j$, and $k$ are as follows:
    \begin{enumerate}
        \item The {\it fiber number} $i\in\{1,\dots,n\}$, which is the final point $P_i$ of $(\pathelement{x}_{a_s}^{b_s})^{\epsilon_s}$, and is equal to the initial point of $(\pathelement{x}_{a_{s+1}}^{b_{s+1}})^{\epsilon_{s+1}}$.  Explicitly, we have

        \[ i=\begin{cases}
            \rho(x_{a_s})(b_s)=b_{s+1} &\text{ if } \epsilon_s = 1 \text{ and }  \epsilon_{s+1} = 1\\
            b_s=\rho(x_{a_{s+1}})(b_{s+1}) &\text{ if } \epsilon_s = -1 \text{ and }  \epsilon_{s+1} = -1\\
            \rho(x_{a_s})(b_s)=\rho(x_{a_{s+1}})(b_{s+1})  &\text{ if } \epsilon_s = 1 \text{ and }  \epsilon_{s+1} = -1\\
            b_s=b_{s+1} &\text{ if } \epsilon_s = -1 \text{ and }  \epsilon_{s+1} = 1.\\
            
        \end{cases} \]

        \item The \emph{entry location} $j\in \{0,\dots, 4b-1\}$, which denotes the position on the boundary of the neighborhood $U_i$ of $P_i$ where the path $(\pathelement{x}_{a_s}^{b_s})^{\epsilon_s}$ enters $U_i$.
        Explicitly,
        
        \[
            j
            =
            \begin{cases}
                2 a_s +1 &\text{ if }\epsilon_s=1\\
                2 a_s &\text{ if }\epsilon_s=-1\\
            \end{cases}
        \]
        
        \item The \emph{exit location} $k\in \{0,\dots, 4b-1\}$, which denotes the position on the boundary of the neighborhood $U_i$ of $P_i$ where the path $(\pathelement{x}_{a_{s+1}}^{b_{s+1}})^{\epsilon_{s+1}}$ exits $U_i$.
        Explicitly,
        
        \[
            k
            =
            \begin{cases}
                2 a_{s+1} &\text{ if }\epsilon_s=1\\
                2 a_{s+1} + 1 &\text{ if }\epsilon_s=-1\\
            \end{cases}
        \]
        
    \end{enumerate}
\end{definition}

We then write down the configuration triple $\tau(y,t)$ for each pair of consecutive paths $(\pathelement{x}_{c_t}^{d_t})^{\epsilon_t}(\pathelement{x}_{c_{t+1}}^{d_{t+1}})^{\epsilon_{t+1}}$ in $y$ using the same process above.

Since the surface $\Sigma$ is orientable, we can choose a \emph{positive push-off} $(x')^+$, by pushing this curve slightly to the right, relative to the orientation of the curve.
Then, we choose a positive push-off $(y)^+$, by pushing this curve slightly to the right relative to its orientation, but by a smaller radius than $(x')^+$.

Since the loops $x_i$ are disjoint except at $P$, the push-offs can be chosen so that $(x')^+$ and $(y)^+$ intersect transversely in double points in the neighborhoods $U_i$, and so that outside the $U_i$, $(x')^+$ and $(y)^+$ are disjoint. 

After taking push-offs, intersection points between $x'$ and $y$ occur when $\tau(x',s)$ and $\tau(y,t)$ have the same fiber number, or in other words, when the consecutive pairs $(\pathelement{x}_{a_s}^{b_s})^{\epsilon_s}(\pathelement{x}_{a_{s+1}}^{b_{s+1}})^{\epsilon_{s+1}}$ and $(\pathelement{x}_{c_t}^{d_t})^{\epsilon_t}(\pathelement{x}_{c_{t+1}}^{d_{t+1}})^{\epsilon_{t+1}}$ pass through the same $U_i$.  One example is shown in \Cref{intersection_in_U.fig}, in the case where $\epsilon_s=\epsilon_t=1$.

\begin{figure}
   \begin{tikzpicture}
        \node[anchor=south west,inner sep=0] at (0,0){\includegraphics[width=6cm]{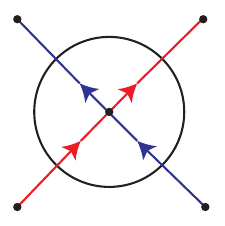}};
        \node at (3.0, 2.5) {\textcolor{black}{$P_{i}$}};
        \node at (.5, 1.5) {\textcolor{red}{$x_{a_s}^{b_s}$}};
        \node at (5.5, 4.5) {\textcolor{red}{$x_{a_{s+1}}^{b_{s+1}}$}};
        \node at (.5, 4.5) {\textcolor{blue}{$x_{c_{t+1}}^{d_{t+1}}$}};
        \node at (5.5, 1.5) {\textcolor{blue}{$x_{c_t}^{d_t}$}};
        \node at (5.5, 3) {\textcolor{black}{$U_{i}$}};
        \node at (0,0) {\textcolor{black}{$P_{b_s}$}};
        \node at (6,0) {\textcolor{black}{$P_{d_t}$}};
    \end{tikzpicture}
    \caption{
        Two pairs of consecutive paths $\pathelement{x}_{a_s}^{b_s} \pathelement{x}_{a_{s+1}}^{b_{s+1}}$ and $\pathelement{x}_{c_t}^{d_t} \pathelement{x}_{c_{t+1}}^{d_{t+1}}$ intersecting in the neighborhood $U_i$ of $P_i$.
        \label{intersection_in_U.fig}
    }
\end{figure}
 
In general, these pairs of consecutive paths can intersect $U_i$ in 4, 3, 2, or 1 distinct endpoints, as shown in the top rows of \Cref{fig:int_form_4_endpts,fig:int_form_3_endpts,fig:int_form_2_endpts}.
In each case, we compute the contribution to the algebraic intersection number by taking push-offs as shown in the bottom rows of the same figures, and these figures should be viewed as local models for the intersections between push-offs. The local intersection numbers for each possible configuration are shown in \Cref{fig:int_form_4_endpts,fig:int_form_3_endpts,fig:int_form_2_endpts} as well, and can therefore be read off from the configuration triples.\end{proof}

\begin{figure}
 \includegraphics[width=\textwidth]{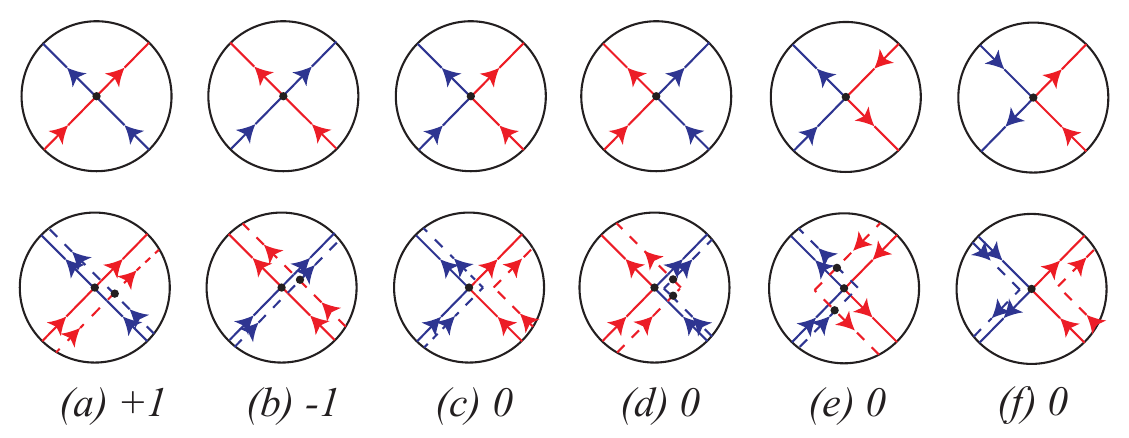}
    \caption{
        Local intersection numbers of the push-offs $(x')^+$ (red) and $(y)^+$ (blue) with $U_i$, in the case where $x'$ and $y$ intersect $\partial U_i$ in 4 distinct endpoints.
        \label{fig:int_form_4_endpts}
    }
\end{figure}

\begin{figure}
   \includegraphics[width=\textwidth]{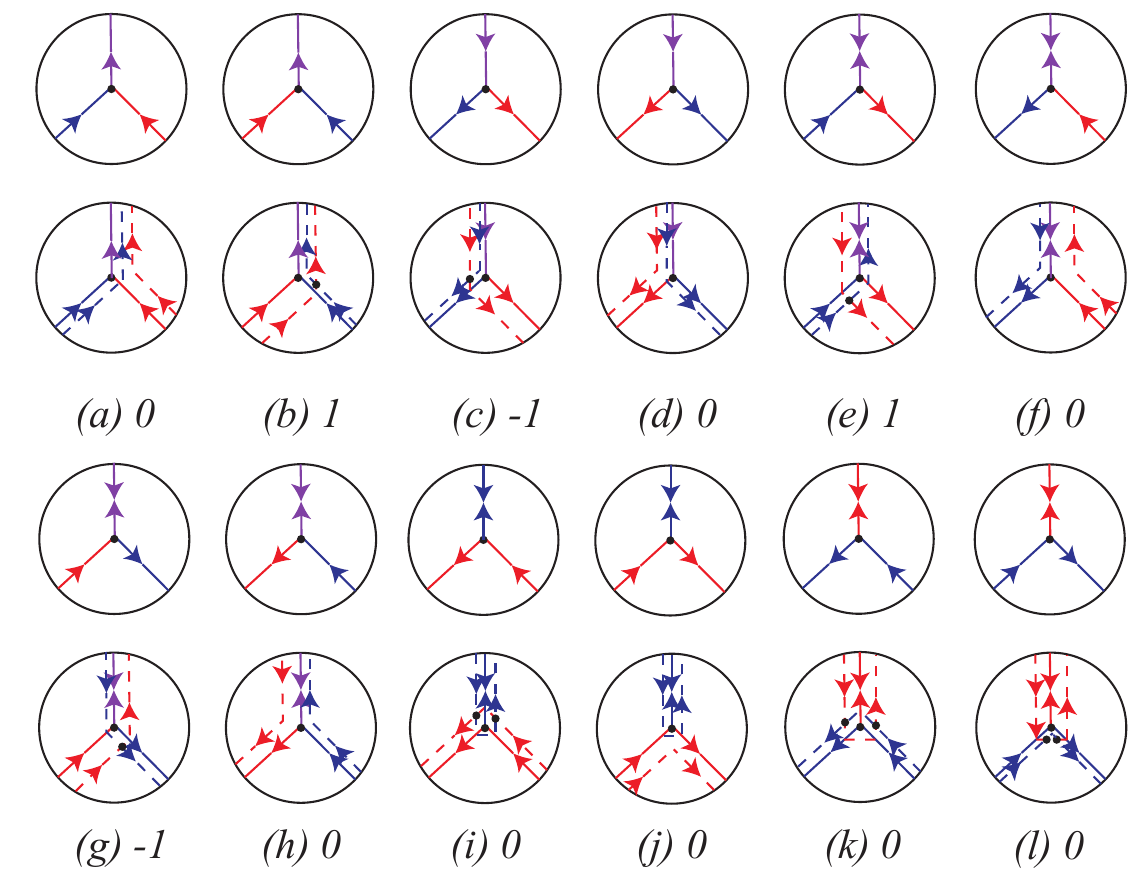}
    \caption{
        Local intersection numbers of the push-offs $(x')^+$ (red) and $(y)^+$ (blue) with $U_i$, in the case where $x'$ and $y$ intersect $\partial U_i$ in 3 distinct endpoints.
        \label{fig:int_form_3_endpts}
    }
\end{figure}

\begin{figure}
\includegraphics[width=4in]{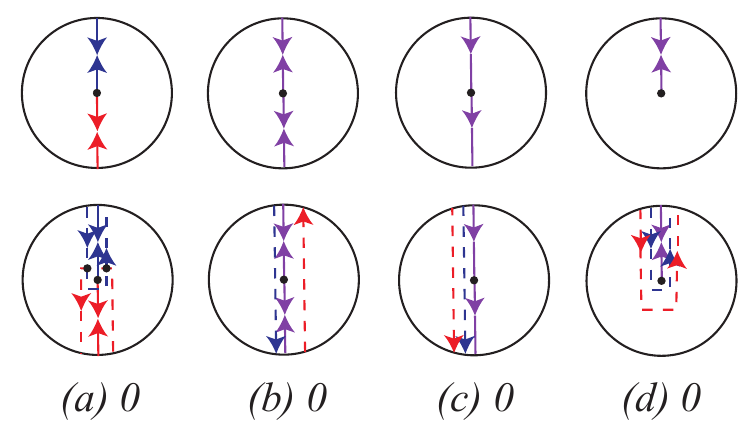}
    \caption{
        Local intersection numbers of the push-offs $(x')^+$ (red) and $(y)^+$ (blue) with $U_i$, in the case where $x'$ and $y$ intersect $\partial U_i$ in 2 or 1 distinct endpoints.
        \label{fig:int_form_2_endpts}
    }
\end{figure}

\begin{remark}
    The form $\langle \cdot, \cdot \rangle_\Sigma$ is skew-symmetric, though this is not obvious from \Cref{fig:int_form_4_endpts,fig:int_form_3_endpts,fig:int_form_2_endpts}.
    While interchanging red and blue in the individual local pictures in \Cref{fig:int_form_3_endpts,fig:int_form_2_endpts} does not obviously produce a skew-symmetric form as in \Cref{fig:int_form_4_endpts}, the sum over all the local pictures involved is skew-symmetric.
\end{remark}

\begin{example}\label{spun_trefoil_int_form.ex}
    We now use \Cref{intersectionform.alg} to compute the intersection form on the branched cover of the spun trefoil with respect to our basis $\{\loopelement{x}_3^1,\loopelement{x}_5^1-\loopelement{x}_6^1\}$ of $H_2(X)$ from \Cref{spun_trefoil_homology.ex}.  Recall that 
    \[
        \Phi(x,y) = \langle x', y \rangle_\Sigma
    \]
    where $x'$ is any element of $L_\alpha$ which satisfies $x-x'\in L_\beta$.
    
    For $x=\loopelement{x}_3^1$, we write $x=(\loopelement{x}_4^1-\loopelement{x}_5^1)+(\loopelement{x}_3^1-\loopelement{x}_4^1+\loopelement{x}_5^1),$ where $\loopelement{x}_4^1-\loopelement{x}_5^1\in L_\alpha$ and $\loopelement{x}_3^1-\loopelement{x}_4^1+\loopelement{x}_5^1 \in L_\beta$.
    Then we take $x'=\loopelement{x}_4^1-\loopelement{x}_5^1.$
    
    Similarly, for $x=\loopelement{x}_5^1-\loopelement{x}_6^1$, we write $x=((\loopelement{x}_3^1-\loopelement{x}_6^1)-(\loopelement{x}_4^1-\loopelement{x}_5^1))-(\loopelement{x}_3^1-\loopelement{x}_4^1)$ where $(\loopelement{x}_3^1-\loopelement{x}_6^1)-(\loopelement{x}_4^1-\loopelement{x}_5^1)\in L_\alpha$ and $\loopelement{x}_3^1-\loopelement{x}_4^1\in L_\beta$.
    Then we take $x'=(\loopelement{x}_3^1-\loopelement{x}_6^1)-(\loopelement{x}_4^1-\loopelement{x}_5^1).$
    
    In what follows, we compute the intersection numbers of these loops, to obtain the following representing matrix of the intersection form:
    
    \begin{center}
        \begin{tabular}{c||c|c}
            &$\loopelement{x}_3^1$&$\loopelement{x}_5^1-\loopelement{x}_6^1$\\
            \midrule
            \midrule
            $\loopelement{x}_4^1-\loopelement{x}_5^1$& $0$ & $-1$ \\
            \midrule
            $\loopelement{x}_3^1-\loopelement{x}_6^1+\loopelement{x}_5^1 - \loopelement{x}_4^1$ & $-1$ & $0$ \\
        \end{tabular}
    \end{center}  

    Because this is a relatively small example, we could of course draw the above loops and compute the above intersection numbers by hand. However, our goal here is to give a worked example of \Cref{intersectionform.alg}, which we use to compute (by computer) the intersection form for the families of larger examples  in \Cref{sec:twist_spun}, \Cref{stevedore.sec}, \Cref{sec:suciu}, and \Cref{xi.sec}.  
    
    Recall that the steps for computing $\langle x',y\rangle_\Sigma$ are as follows:
    
    \begin{enumerate}
        \item Write $x'$ and $y$ as products of the paths $\pathelement{x}_i^j$.  
        \item For each cyclically adjacent pair of paths $\pathelement{x}_i^j$ in $x'$ (respectively $y$), write down its corresponding configuration triple.
        \item For each pair of configuration triples of $x'$ and $y$ with the same first coordinate $i$ (in other words, corresponding to the same neighborhood $U_i$ of $P_i$), compute the local intersection number in $U_i$ using \Cref{fig:int_form_4_endpts,fig:int_form_3_endpts,fig:int_form_2_endpts} 
        \item Compute the total intersection number by adding the results of (3).
    \end{enumerate}
    
    We carry out these steps for the upper-left entry of the intersection form $\langle \loopelement{x}_4^1(\loopelement{x}_5^1)^{-1}, \loopelement{x}_3^1\rangle_\Sigma = 0$, switching back to multiplicative notation for the purpose of specifying a cyclic order on the paths $\pathelement{x}_i^j$ in step (2).  
    
    {\it Intersection number } $\langle \loopelement{x}_4^1(\loopelement{x}_5^1)^{-1}, \loopelement{x}_3^1\rangle_\Sigma = 0$:
    
    \begin{enumerate}
        \item Rewriting using the claw map, $\loopelement{x}_4^1(\loopelement{x}_5^1)^{-1}=\pathelement{x}_4^1(\pathelement{x}_5^1)^{-1}$, and $\loopelement{x}_3^1=\pathelement{x}_3^1(\pathelement{x}_2^1)^{-1}.$ 
        \item Configuration triples for cyclically adjacent paths in $\pathelement{x}_4^1(\pathelement{x}_5^1)^{-1}$:
        $$(\pathelement{x}_4^1,(\pathelement{x}_5^1)^{-1})\mapsto (3,9,11) $$
        $$((\pathelement{x}_5^1)^{-1},\pathelement{x}_4^1)\mapsto (1, 10, 8)$$
         Configuration triples for cyclically adjacent paths in  $\pathelement{x}_3^1(\pathelement{x}_2^1)^{-1}$:
        
        $$(\pathelement{x}_3^1,(\pathelement{x}_2^1)^{-1})\mapsto (3,7,5)$$
        $$((\pathelement{x}_2^1)^{-1},\pathelement{x}_3^1)\mapsto (1,4,6)$$
        \item The pair of triples $((3,9,11),(3,7,5))$ contributes 0 to the intersection number, as does the pair of triples $((1, 10, 8),(1,4,6))$.
        \item It follows that $\langle \loopelement{x}_4^1(\loopelement{x}_5^1)^{-1}, \loopelement{x}_3^1\rangle_\Sigma = 0$.
    \end{enumerate}
    The configuration triples and local intersection numbers for the other three entries are given in ~\Cref{tab:spun_trefoil_intersections}, completing the computation of the intersection form for the branched cover of the spun trefoil.

 \setlength{\tabcolsep}{10pt} 
    \renewcommand{\arraystretch}{1.5} 
    \begin{table}[]
        \centering
        \begin{tabular}{c|c|c|c}
        \toprule
        Consecutive Paths & Consecutive Paths &  Local Intersection & Total Int. \\
        and Triples for $x'$ & and Triples for $y$ & Numbers & Number\\
        \midrule
        \midrule
        $x'=\pathelement{x}_4^1(\pathelement{x}_5^1)^{-1}$&$y=\pathelement{x}_3^1(\pathelement{x}_2^1)^{-1}$ &&0\\
        \hline
        $(\pathelement{x}_4^1,(\pathelement{x}_5^1)^{-1}) \mapsto (3,9,11)$&$(\pathelement{x}_3^1,(\pathelement{x}_2^1)^{-1})\mapsto (3,7,5)$&$(3,9,11)\cdot(3,7,5)\mapsto 0$&\\
        $((\pathelement{x}_5^1)^{-1},\pathelement{x}_4^1)\mapsto (1, 10, 8)$&$((\pathelement{x}_2^1)^{-1},\pathelement{x}_3^1)\mapsto (1,4,6)$&$(1,10,8)\cdot(1,4,6)\mapsto 0$& \\
        \hline
        $x'=\pathelement{x}_4^1(\pathelement{x}_5^1)^{-1}$&$y=\pathelement{x}_5^1(\pathelement{x}_6^1)^{-1}$&&$-1$\\
        \hline
         $(\pathelement{x}_4^1,(\pathelement{x}_5^1)^{-1})\mapsto (3,9,11)$&$(\pathelement{x}_5^1,(\pathelement{x}_6^1)^{-1})\mapsto (3,11,13)$
        &$(3,9,11)\cdot(3,11,13)\mapsto -1$&\\
        $((\pathelement{x}_5^1)^{-1},\pathelement{x}_4^1)\mapsto (1,10,8)$&$((\pathelement{x}_6^1)^{-1},\pathelement{x}_5^1)\mapsto (1,12,10)$&$(1,10,8)\cdot(1,12,10)\mapsto 0$&\\
        \hline
        $x'=\pathelement{x}_3^1(\pathelement{x}_6^1)^{-1}\pathelement{x}_5^1(\pathelement{x}_4^1)^{-1}$&$y=\pathelement{x}_3^1(\pathelement{x}_2^1)^{-1}$&&$-1$\\
        \hline
        $(\pathelement{x}_3^1,(\pathelement{x}_6^1)^{-1})\mapsto (3,7,13)$&$(\pathelement{x}_3^1,(\pathelement{x}_2^1)^{-1})\mapsto (3,7,5)$
        &$(3,7,13)\cdot (3,7,5)\mapsto -1$ &\\
        $((\pathelement{x}_6^1)^{-1},\pathelement{x}_5^1)\mapsto (1,12,10)$&$((\pathelement{x}_2^1)^{-1},\pathelement{x}_3^1)\mapsto (1,4,6)$&$(3,11,9)\cdot(3,7,5)\mapsto 0$&\\
        $(\pathelement{x}_5^1,(\pathelement{x}_4^1)^{-1})\mapsto (3,11,9)$&&$(1,12,10)\cdot(1,4,6)\mapsto 0$&\\
        $((\pathelement{x}_4^1)^{-1},\pathelement{x}_3^1)\mapsto  (1,8,6)$&&$(1,8,6)\cdot(1,4,6)\mapsto 0$&\\
        \hline
        $x'=\pathelement{x}_3^1(\pathelement{x}_6^1)^{-1}\pathelement{x}_5^1({x}_4^1)^{-1}$&$y=\pathelement{x}_5^1(\pathelement{x}_6^1)^{-1}$&&$0$\\
        \hline
        $(\pathelement{x}_3^1,(\pathelement{x}_6^1)^{-1})\mapsto (3,7,13)$&$(\pathelement{x}_5^1,(\pathelement{x}_6^1)^{-1})\mapsto (3,11,13)$
        &$(3,7,13)\cdot (3,11,13)\mapsto 0$&\\
        $((\pathelement{x}_6^1)^{-1},\pathelement{x}_5^1)\mapsto (1,12,10)$&$(({x}_6^1)^{-1},\pathelement{x}_5^1)\mapsto (1,12,10)$& $(1,12,10)\cdot (1,12,10)\mapsto 0$&\\
        $(\pathelement{x}_5^1,(\pathelement{x}_4^1)^{-1})\mapsto (3,11,9)$&& $(3,11,9)\cdot(3,11,13)\mapsto 0$&\\
        $((\pathelement{x}_4^1)^{-1},\pathelement{x}_3^1)\mapsto  (1,8,6)$&&$(1,8,6)\cdot(1,12,10)\mapsto 0$&\\
        \bottomrule
        \end{tabular}
        \caption{
            Configuration triples and local intersection numbers for the spun trefoil.
            \label{tab:spun_trefoil_intersections}
        }
    \end{table}
   \end{example}

\section{Branched Covers of Twist-Spun Knots}
\label{sec:twist_spun}
We begin by applying our algorithms to various 
 branched covers of twist-spun knots in $\sphere{4}$.  In many (but not all) cases, the resulting branched cover is itself a spun manifold.  We first discuss the problem of when a branched cover of $(\sphere{3},K)$ extends to a branched cover of its $k$-twist spin.

\subsection{Branched covers of twist-spun knots}

Following \cite{meier2018trisections}, the $k$-spin $\mathcal{S}^k(\sphere{3},K)$ is given by
\[
    \mathcal{S}^k(\sphere{3},K)
    =
    ((\sphere{3},K)^\circ\times \sphere{1})
    \bigcup_{\tau^k}
    (\sphere{2}\times \disk{2},\{\mathfrak{n},\mathfrak{s}\} \times \disk{2})
\]
where $(\sphere{3},K)^\circ$ is the 3-ball-tangle-pair obtained from the pair $(\sphere{3},K)$ by removing a pair $(\disk{3},\disk{1})$, and $\tau^k$ is the $k^{\text{th}}$ power of the Gluck twist $\tau \colon \sphere{2} \times \sphere{1} \rightarrow \sphere{2}\times \sphere{1}$.  

\begin{proposition} 
    \label{extendtospin.prop}
    Suppose $f \colon (M,L) \rightarrow (\sphere{3},K)$ is an $n$-fold branched cover with corresponding monodromy homomorphism $\rho \colon \pi_1(\sphere{3}-K)\rightarrow S_n$.
    Then $f$ extends to a corresponding branched cover of the $k$-twist spin $\tau^kK$ of $K$ in $\sphere{4}$ if and only if $\rho$ is invariant under conjugation by $\rho(\mu_0)^k$, where $\mu_0$ is the meridian of $K$ about the arc adjacent to $\mathfrak{n}$.
    In particular, such an extension exists for cyclic covers, and when $k$ is even, for dihedral and simple covers.
\end{proposition}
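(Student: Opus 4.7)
The plan is to reduce the extension problem to a purely algebraic condition using the classical presentation of $\pi_1(\sphere{4} - \tau^k K)$ as a quotient of $\pi_1(\sphere{3}-K)$. The key fact, due to Zeeman, is
\[
    \pi_1(\sphere{4} - \tau^k K) \;\isom\; \pi_1(\sphere{3} - K) \, \big/ \, \langle\langle\, [\mu_0^k, g] : g \in \pi_1(\sphere{3}-K) \,\rangle\rangle.
\]
I would derive this by applying the Seifert--van Kampen theorem to the decomposition $\mathcal{S}^k(\sphere{3},K) = ((\sphere{3},K)^\circ \times \sphere{1}) \cup_{\tau^k} (\sphere{2}\times\disk{2}, \{\mathfrak{n},\mathfrak{s}\}\times\disk{2})$. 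The $\sphere{1}$-fibered piece contributes a semidirect product $\pi_1(\sphere{3}-K) \rtimes_\phi \mathbb{Z}$, where $\phi$ is the action on $\pi_1$ induced by the Gluck twist $\tau^k$, namely conjugation by $\mu_0^k$. The sphere-filling piece then identifies the $\mathbb{Z}$ generator with $\mu_0^k$, so the semidirect-product relations $t g t^{-1} = \mu_0^k g \mu_0^{-k}$ become commutator relations $[\mu_0^k, g] = 1$; equivalently, $\mu_0^k$ becomes central in the quotient.

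Given this presentation, the extension is immediate from the universal property: $\rho$ factors (necessarily uniquely) through the quotient if and only if $\rho\bigl([\mu_0^k, g]\bigr) = e$ for every $g \in \pi_1(\sphere{3}-K)$, which, since $\rho$ is a homomorphism, is equivalent to $\rho(\mu_0)^k$ commuting with every element of $\im(\rho)$, and in turn to $\rho(\mu_0^k g \mu_0^{-k}) = \rho(g)$ for all $g$ --- the invariance condition stated in the proposition. Geometrically, when this holds, the branched cover of $(\sphere{4}, \tau^k K)$ determined by the extended representation restricts to $f$ on each fiber of the spin decomposition, furnishing the claimed extension.

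The two corollaries then fall out formally. For a cyclic cover, $\im(\rho)$ lies in the abelian cyclic subgroup $\langle \rho(\mu_0) \rangle \subset S_n$, so $\rho(\mu_0)^k$ automatically commutes with every element of $\im(\rho)$. For dihedral covers sending meridians to reflections, and simple covers sending meridians to transpositions, $\rho(\mu_0)$ has order $2$, so $\rho(\mu_0)^k = e$ whenever $k$ is even, trivially satisfying the centralization condition. I expect the only substantive step to be the derivation of Zeeman's presentation --- in particular the careful identification of the $\sphere{1}$-generator with $\mu_0^k$, where the Gluck twist enters essentially; everything downstream is routine.
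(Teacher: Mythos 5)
Your proof is correct and follows essentially the same route as the paper: both reduce the extension problem to the presentation of $\pi_1(\sphere{4}-\tau^k K)$ obtained from $\pi_1(\sphere{3}-K)$ by adding the relations $\mu_0^k x \mu_0^{-k} = x$ (the paper simply cites Rolfsen for this, where you sketch its derivation via Seifert--van Kampen), and then observe that $\rho$ factors through this quotient exactly when $\rho$ is invariant under conjugation by $\rho(\mu_0)^k$. The cyclic and dihedral/simple special cases are handled identically.
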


\begin{proof}
    Let $\mu_0$ be a meridian of $K$ about the arc adjacent to $\mathfrak{n}$.
    A presentation for $\pi_1(\sphere{4}-\tau^k K)$ can be obtained from a presentation of $\pi_1(\sphere{3}-K)$ by adding relations of the form $\mu_0^{k} x \mu_0^{-k}=x$ for all $x\in\pi_1(\sphere{3}-K)$ \cite[Ex. 5, p.98]{rolfsen2003knots}.
    Then $\rho$ extends to the $k$-twist spin if and only if $\rho(\mu_0)^k\rho(x)\rho(\mu_0)^{-k}=\rho(x)$ for all $x\in \pi_1(\sphere{3}-K)$. 
\end{proof}

\subsection{Spun manifold-knot pairs as branched covers}

Let $(M,K)$ be a pair consisting of a closed, connected 3-manifold $M$ and a knot $K$ in $M$.
Meier \cite{meier2018trisections} introduced the notion of the $k$-twist spin of the pair $(M,K)$, defined as follows:

\[
    \mathcal{S}^k(M,K)
    =
    ((M,K)^{\circ}\times \sphere{1})
    \bigcup_{\tau^{k}}
    (\sphere{2}\times \disk{2},\{\mathfrak{n},\mathfrak{s}\}\times \disk{2}).
\]

It is well-known that the spin $\mathcal{S}^k(M)$ of the manifold $M$ depends only on the parity of $k$, but as noted in \cite{meier2018trisections}, the isotopy class of the resulting spun knot $\tau^kK$ in $\mathcal{S}^k(M)$ could vary with $k$ when the parity of $k$ is fixed.

Now suppose $f \colon (M,L) \rightarrow (\sphere{3}, K)$ is an $n$-fold branched cover with corresponding monodromy homomorphism $\rho \colon \pi_1(\sphere{3}-K)\rightarrow S_n$, satisfying the criteria of \Cref{extendtospin.prop}, so that the cover $f$ extends to a cover $\mathcal{S}(f) \colon (X, F)\rightarrow (\sphere{4},\tau^k K)$.
It is natural to ask when $(X,F)$ is itself a spun manifold $\mathcal{S}^l(M,L)$ for some twist parameter $l$.
A necessary condition is that the pre-image $L$ of $K$ be a knot, and hence that the covering is cyclic.
The twist parameter in the cover can be determined from the twist parameter $k$ of the branching set and the degree of the cover $n$, as follows.

\begin{proposition}
    \label{coverisspun.prop}
    Let $f \colon (M,L) \rightarrow (\sphere{3},K)$ be an $n$-fold cyclic cover.
    Let $(\sphere{4},\tau^k K)$ denote the $k$-twist spin $\mathcal{S}^k(\sphere{3}, K)$.
    Let $\widehat{f}$ be the extension $\widehat{f} \colon (X,F)\rightarrow (\sphere{4},\tau^kK)$ guaranteed by \Cref{extendtospin.prop}.
    Then if $n$ divides $k$, $(X,F) = \mathcal{S}^{k/n}(M, L)$.
\end{proposition}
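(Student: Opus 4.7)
The plan is to work from the explicit gluing decomposition defining the twist spin, decompose the branched cover $(X, F)$ accordingly, and identify the resulting gluing as a Gluck twist of order $k/n$.

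Starting from
\[
\mathcal{S}^k(\sphere{3}, K) = ((\sphere{3}, K)^\circ \times \sphere{1}) \cup_{\tau^k} (\sphere{2} \times \disk{2}, \{\mathfrak{n}, \mathfrak{s}\} \times \disk{2}),
\]
I take preimages under $\widehat{f}$ to write $(X, F) = (X_1, F_1) \cup_{\widetilde{\tau}} (X_2, F_2)$, where $\widetilde{\tau}$ is the lift of $\tau^k$ induced by $\widehat{f}$. The first task is to identify each piece. The monodromy $\rho$ restricted to $\pi_1((\sphere{3}, K)^\circ \times \sphere{1} - K^\circ \times \sphere{1})$ factors through projection to the first factor, because the $\sphere{1}$ factor bounds a meridional disk in $\sphere{2} \times \disk{2}$ and is therefore nullhomotopic in $\sphere{4} - \tau^k K$. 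Hence $\widehat{f}|_{X_1} = f \times \id_{\sphere{1}}$, so $(X_1, F_1) = ((M, L)^\circ \times \sphere{1})$. For the second piece, the $n$-fold cyclic cover of $(\sphere{2} \times \disk{2}, \{\mathfrak{n}, \mathfrak{s}\} \times \disk{2})$ branched over the two disks is itself $(\sphere{2} \times \disk{2}, \{\mathfrak{n}, \mathfrak{s}\} \times \disk{2})$, since on each $\sphere{2}$-slice it restricts to the standard cyclic branched cover $\sphere{2} \to \sphere{2}$ over two points and the $\disk{2}$ factor is covered trivially.

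The key step is computing $\widetilde{\tau}$ on the common boundary $\sphere{2} \times \sphere{1}$. In coordinates where $\tau^k$ acts fiberwise as rotation by $k\theta$ about the $\{\mathfrak{n}, \mathfrak{s}\}$ axis, i.e.\ $\tau^k(z, \theta) = (e^{ik\theta} z, \theta)$, and the covering map is $(w, \theta) \mapsto (w^n, \theta)$, the equivariance $\pi \circ \widetilde{\tau} = \tau^k \circ \pi$ together with continuity forces
\[
\widetilde{\tau}(w, \theta) = (e^{ik\theta/n} w, \theta).
\]
The hypothesis $n \mid k$ is precisely what makes this formula single-valued as $\theta$ traverses $\sphere{1}$, and the resulting self-map is, by definition, the Gluck twist $\tau^{k/n}$. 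Substituting,
\[
(X, F) = ((M, L)^\circ \times \sphere{1}) \cup_{\tau^{k/n}} (\sphere{2} \times \disk{2}, \{\mathfrak{n}, \mathfrak{s}\} \times \disk{2}) = \mathcal{S}^{k/n}(M, L).
\]

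The main obstacle is justifying that the local equivariant lift extends to a well-defined global self-map of the entire boundary $\sphere{2} \times \sphere{1}$ of the cover, rather than only near the lift of the axis $\{\mathfrak{n}, \mathfrak{s}\} \times \sphere{1}$. This reduces to two points: first, $\tau^k$ is a rotation about an axis that lies entirely in the branch set, so its equivariant lift is determined globally on each fiber $\sphere{2}$ by its behavior near the lifted axis; second, the monodromy around the $\sphere{1}$ direction of the lifted rotation is $e^{2\pi i k/n}$, which equals the identity exactly when $n \mid k$. Without this divisibility, no such global lift exists, which is consistent with the fact that the preimage of $K$ need not be a single knot in general.
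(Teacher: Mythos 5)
Your proof is correct and follows essentially the same route as the paper's: decompose $(X,F)$ along the preimages of the two pieces of the twist-spin gluing, identify the cover of the product piece as $(M,L)^\circ\times\sphere{1}$ and the cover of $(\sphere{2}\times\disk{2},\{\mathfrak{n},\mathfrak{s}\}\times\disk{2})$ as itself, and show the glued lift of $\tau^k$ is $\tau^{k/n}$. The only difference is cosmetic: you verify the existence and form of the lift by an explicit fiberwise coordinate computation $(w,\theta)\mapsto(e^{ik\theta/n}w,\theta)$, whereas the paper applies the covering-space lifting criterion to $(\tau^k)_*$ and $(r_n\times\operatorname{Id})_*$ on $\pi_1$ of the complement of the branch axis in $\sphere{2}\times\sphere{1}$; both yield that the lift exists precisely when $n\mid k$.
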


\begin{proof}
    We construct the cover of the decomposition 
    \[
        \mathcal{S}^k(\sphere{3},K)
        =
        ((\sphere{3},K)^\circ\times \sphere{1})
        \bigcup_{\tau^k}
        (\sphere{2}\times \disk{2},\{\mathfrak{n},\mathfrak{s}\}\times \disk{2}).
    \]
    Let $R_{2\pi/n} \colon \sphere{2}\rightarrow \sphere{2}$ denote the $2\pi/n$-rotation of $\sphere{2}$ about an axis through the poles ${\mathfrak{n},\mathfrak{s}}$, and let $r_n \colon (\widetilde{\sphere{2}},\{\widetilde{\mathfrak{n}},\widetilde{\mathfrak{s}}\})\rightarrow (\sphere{2},\{\mathfrak{n},\mathfrak{s}\})$ be the branched covering map corresponding to the action of $R_{2\pi/n}$. The map $r_n$ extends to a branched cover 
    \[
        r_n \times \operatorname{Id} \colon
        (\widetilde{\sphere{2}}\times \disk{2},\{\widetilde{\mathfrak{n}},\widetilde{\mathfrak{s}}\}\times \disk{2})
        \rightarrow
        (\sphere{2}\times \disk{2}, \{\mathfrak{n},\mathfrak{s}\}).
    \]

    By restricting $f$, we have a cyclic branched cover $f^\circ: (M,L)^\circ\rightarrow (\sphere{3},K)^\circ$, which is equal to $r_n$ on its boundary, and which extends to 

    \[
        f^\circ \times \text{Id} \colon
        (M,L)^\circ \times \sphere{1} \rightarrow (\sphere{3},K)^\circ \times \sphere{1}
    \]

    It remains to check that
    $\tau^k \colon (\sphere{2}\times \sphere{1}, \{\mathfrak{n},\mathfrak{s}\}\times \sphere{1})\rightarrow (\sphere{2}\times \sphere{1}, \{\mathfrak{n},\mathfrak{s}\}\times \sphere{1})$
    lifts to
    $\widetilde{\tau^k} \colon (\widetilde{\sphere{2}}\times \sphere{1}, \{\widetilde{\mathfrak{n}},\widetilde{\mathfrak{s}}\}\times \sphere{1}) \rightarrow (\widetilde{\sphere{2}}\times \sphere{1}, \{\widetilde{\mathfrak{n}},\widetilde{\mathfrak{s}}\}\times \sphere{1})$
    if and only if $n$ divides $k$, in which case $\widetilde{\tau^k} = \tau^{k/n}$.  

    Consider the restriction
    $\tau^k \colon (\sphere{2}-\{\mathfrak{n},\mathfrak{s}\})\times \sphere{1}\rightarrow (\sphere{2}-\{\mathfrak{n},\mathfrak{s}\})\times \sphere{1}$ of $\tau^k$
    to the complement of the branching set.
    Note that
    $(\tau^k)_* \colon \pi_1((\sphere{2}-\{\mathfrak{n},\mathfrak{s}\})\times \sphere{1})\rightarrow \pi_1((\sphere{2}-\{\mathfrak{n},\mathfrak{s}\})\times \sphere{1})$
    is given by the map $(x,y) \mapsto (kx,y)$ on $\mathbb{Z} \times \mathbb{Z} \rightarrow \mathbb{Z} \times \mathbb{Z}$.  

    The map $(r_n \times \operatorname{Id})_* \colon \pi_1( (\widetilde{\sphere{2}}-\{\widetilde{\mathfrak{n}},\widetilde{\mathfrak{s}}\})\times \sphere{1})\rightarrow \pi_1((\sphere{2}-\{\mathfrak{n},\mathfrak{s}\})\times \sphere{1})$ is given by $(x,y)\mapsto (nx,y)$.  As $k\mathbb{Z}\times \mathbb{Z}\leq n\mathbb{Z}\times \mathbb{Z} $ if and only if $n$ divides $k$, the result follows.
\end{proof}

There are many examples of representations $\rho \colon \pi_1(\sphere{3}-K) \rightarrow S_n$ corresponding to non-cyclic branched covers $f \colon (M,L)\rightarrow (\sphere{3},K)$, such that $\rho$, and hence $f$, extends to the spin $\mathcal{S}^k(\sphere{3},K)$.
A non-cyclic branched cover of a spun knot $\mathcal{S}^k(\sphere{3},K)$, while not strictly a spun manifold itself, may still be obtained from $(M,L)^\circ\times \sphere{1}$ by gluing in copies of $(\sphere{2}\times \disk{2}, \{\mathfrak{n},\mathfrak{s}\})$, and it would be interesting to analyze this structure further.

In the following subsections, we compute invariants of both cyclic and non-cyclic branched covers of spun knots.

\subsection{Twist-Spun torus knots}

Using the recipe for twist spun knots from \cite{meier2017bridgeS4}, we can construct the tri-plane diagram for a $k$-twist spin of a $(2, n)$-torus knot $\Twist^{k} T(2, n)$ in \Cref{fig:l_twist_spin_T_2_n}.
Here the number of full-twists $k \in \ZZ$ can be chosen arbitrarily and $n \in \ZZ$ is odd.  The Wirtinger generators of the tri-plane diagram are labelled $\{x_0,\dots, x_7\}$, from left to right as usual.

\begin{figure}
    \centering
    \begin{tikzpicture}
        \node[anchor=south west,inner sep=0] at (0,0){\includegraphics[width=16cm]{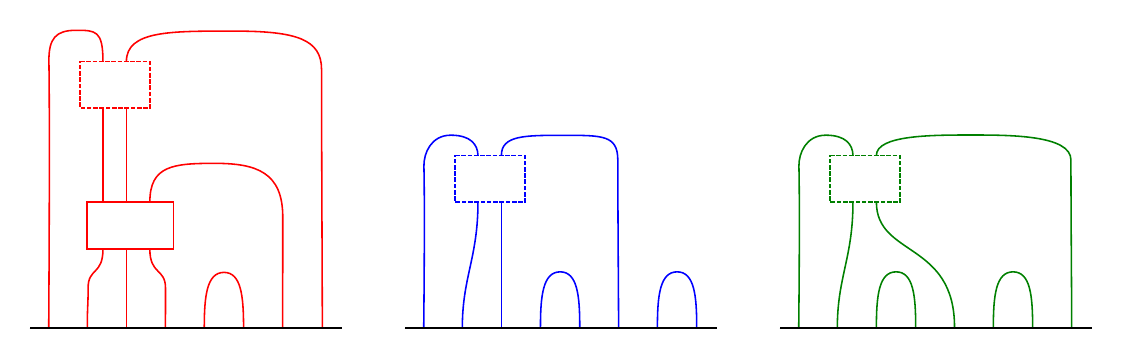}};
        \node at (1.85, 1.9) {\color{red} \scriptsize $k$ full};
        \node at (1.65, 3.9) {\color{red} \scriptsize $n$ half};
        \node at (7.0, 2.55) {\color{blue} \scriptsize $n$ half};
        \node at (12.35, 2.55) {\color{myGreen} \scriptsize $n$ half};
    \end{tikzpicture}
    \caption[Tri-plane diagram of a 4-bridge trisection of a $k$-twist spin of the $(2, n)$-torus knot,  $\Twist^{k} T(2, n)$]{
        Tri-plane diagram of a 4-bridge trisection of an $k$-twist spin of the $(2, n)$ torus knot, $\Twist^{k} T(2, n)$, with $k \in \ZZ$ arbitrary and $n \in \ZZ$ not divisible by $2$.
        The number of twisting $k$ determines the number of full-twists in the red $\alpha$-tangle, and the second parameter $n$ of the torus knot the number of half-twists near the top of the three tangles.  
        \label{fig:l_twist_spin_T_2_n}
    }
\end{figure}

\begin{figure}
    \centering
    \begin{tikzpicture}
    \node[anchor=south west,inner sep=0] at (0,0){\includegraphics[width=16cm]{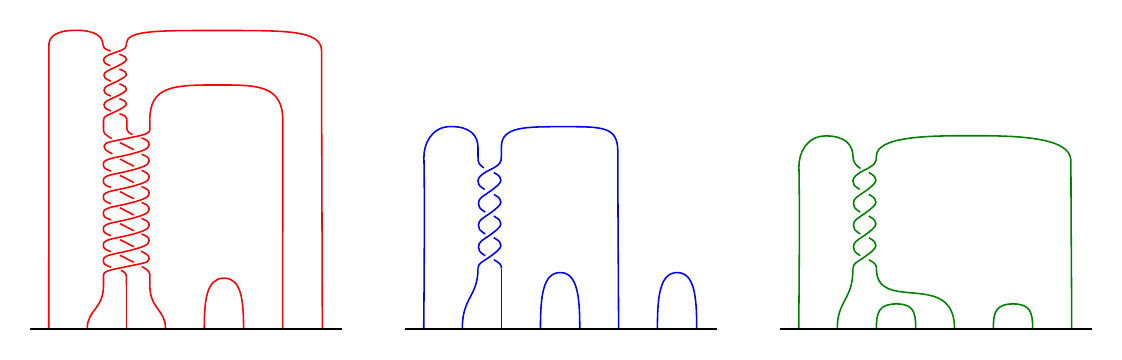}};
    \end{tikzpicture}
    \caption[Tri-plane diagram of a 4-bridge trisection of the $3$-twist spin of the $(2, 5)$ torus knot, $\Twist^{3} T_{2, 5}$]{
        Tri-plane diagram of a 4-bridge trisection of the $3$-twist spin of the $(2, 5)$ torus knot, $\Twist^{3} T_{2, 5}$.
        This is the case $l = 3$, $n = 5$ of \Cref{fig:l_twist_spin_T_2_n}.
        \label{fig:3_twist_spin_T_2_5}
    }
\end{figure}

We now construct dihedral and cyclic covers of these spun knots.  

\subsection{Dihedral covers of twist-spun torus knots.}

The irregular $p$-fold dihedral branched cover of $(\sphere{4},F)$ is the branched cover corresponding to the homomorphism $\rho:\pi_1(\sphere{4}-F)\twoheadrightarrow D_{2p}$ which sends a meridian to a reflection through a vertex of a regular $p$-gon.  When such a homomorphism exists, $F\subset \sphere{4}$ is called (Fox) $p$-colorable.

It follows from \Cref{extendtospin.prop} that the 2-knot $\tau^k(2,p)$ is p-colorable if and only if $k$ is even.   The explicit coloring we use in our computations maps generators $x_0$ and $x_1$ to the reflection $R_1$ through vertex 1 of the regular $p$-gon (with vertices labelled cyclically $1,\dots, p$), and maps all other generators to the reflection $R_2$ through vertex 2.

One can also detect colorability of 2-knots via the determinant. 
For 2-knots, we will use the definition of the determinant from \cite{joseph20190concordance}.
Let $\Delta(\mathcal{K}) \subseteq \ZZ[t^{\pm 1}]$ 
be the Alexander ideal of the surface knot $\mathcal{K}$.
Then the \emph{determinant} of $\mathcal{K}$ is defined to be the integer
$\Delta(\mathcal{K})|_{t = -1}$, the non-negative generator
of the principal ideal $\{ f(1) \mid f(t) \in \Delta(\mathcal{K}) \} \subseteq \ZZ$.

The determinant of a $k$-twist spin
$\tau^{k} K$ of a classical knot $K$
is the same as $\det(K)$ for an even number
of twists $k$, and $1$ if $k$ is odd.
This implies that a nontrivial Fox $p$-coloring
of the classical knot $k$ extends to a non-trivial
$p$-coloring of the $k$-twist spin $\tau^{k} K$
if and only if the amount of twisting $k$ is even. 


\begin{table}
    \begin{center}
        \renewcommand{\arraystretch}{1.15}
        \footnotesize
        \begin{tabular}{c|cc|c|c|ccc}
            \toprule
            $p$&$k$ & Trisection& $\pi_1$& $H_2$ & Intersection & $\sigma$& even/\\
            &&Parameters&&&Form&&odd\\
            \midrule
            \midrule
            $3$&$0$&$(2;0,0,0)$&$1$&$\ZZ[x_3^1,x_5^1-x_6^1]$&$\begin{pmatrix}0&-1\\-1&0\end{pmatrix}$&$0$&even\\
            \hline
            $3$&$2$&$(2;0,0,0)$&$1$&$\ZZ[x_3^1,x_5^1-x_6^1]$&$\begin{pmatrix}0&-1\\-1&-3\end{pmatrix}$&$0$&odd\\
            \hline
            $3$&$4$&$(2;0,0,0)$&$1$&$\ZZ[x_3^1,x_5^1-x_6^1]$&$\begin{pmatrix}0&-1\\-1&-6\end{pmatrix}$&$0$&even\\
            \hline
            $3$&$6$&$(2;0,0,0)$&$1$&$\ZZ[x_3^1,x_5^1-x_6^1]$&$\begin{pmatrix}0&-1\\-1&-9\end{pmatrix}$&$0$&odd\\
            \midrule
            \midrule
            $5$&$0$&$(4;0,0,0)$&$1$&$\ZZ[x_3^1,x_5^1-x_6^1,x_3^4,x_5^4-x_6^4]$&$\begin{pmatrix}0&-1&0&0\\-1&0&0&0\\0&0&0&-1\\0&0&-1&0\end{pmatrix}$&$0$&even\\
            \hline
            $5$&$2$&$(4;0,0,0)$&$1$&$\ZZ[x_3^1,x_5^1-x_6^1,x_3^4,x_5^4-x_6^4]$&$\begin{pmatrix}0&-1&0&0\\-1&-3&0&1\\0&0&0&-1\\0&1&-1&-2\end{pmatrix}$&$0$&odd\\
            \hline
            $5$&$4$&$(4;0,0,0)$&$1$&$\ZZ[x_3^1,x_5^1-x_6^1,x_3^4,x_5^4-x_6^4]$&$\begin{pmatrix}0&-1&0&0\\-1&-6&0&2\\0&0&0&-1\\0&2&-1&-4\end{pmatrix}$&$0$&even\\
            \hline
            $5$&$6$&$(4;0,0,0)$&$1$&$\ZZ[x_3^1,x_5^1-x_6^1,x_3^4,x_5^4-x_6^4]$&$\begin{pmatrix}0&-1&0&0\\-1&-9&0&3\\0&0&0&-1\\0&3&-1&-6\end{pmatrix}$&$0$&odd\\
            \midrule
            \midrule
            $7$&$0$&$(6;0,0,0)$&$1$&$\ZZ[x_3^1,x_5^1-x_6^1,x_3^4,x_5^4-x_6^4,x_3^5,x_5^5-x_6^5]$&$\begin{pmatrix}0&-1&0&0&0&0\\-1&0&0&0&0&0\\0&0&0&-1&0&0\\0&0&-1&0&0&0\\0&0&0&0&0&-1\\0&0&0&0&-1&0\end{pmatrix}$&$0$&even\\
            \hline
            $7$&$2$&$(6;0,0,0)$&$1$&$\ZZ[x_3^1,x_5^1-x_6^1,x_3^4,x_5^4-x_6^4,x_3^5,x_5^5-x_6^5]$&$\begin{pmatrix}0&-1&0&0&0&0\\-1&-3&0&0&0&-1\\0&0&0&-1&0&0\\0&0&-1&-2&0&-1\\0&0&0&0&0&-1\\0&-1&0&-1&-1&-2\end{pmatrix}$&$0$&odd\\
            \hline
            $7$&$4$&$(6;0,0,0)$&$1$&$\ZZ[x_3^1,x_5^1-x_6^1,x_3^4,x_5^4-x_6^4,x_3^5,x_5^5-x_6^5]$&$\begin{pmatrix}0&-1&0&0&0&0\\-1&-6&0&0&0&-2\\0&0&0&-1&0&0\\0&0&-1&-4&0&-2\\0&0&0&0&0&-1\\0&-2&0&-2&-1&-4\end{pmatrix}$&$0$&even\\
            \hline
            $7$&$6$&$(6;0,0,0)$&$1$&$\ZZ[x_3^1,x_5^1-x_6^1,x_3^4,x_5^4-x_6^4,x_3^5,x_5^5-x_6^5]$&$\begin{pmatrix}0&-1&0&0&0&0\\-1&-9&0&0&0&-3\\0&0&0&-1&0&0\\0&0&-1&-6&0&-3\\0&0&0&0&0&-1\\0&-3&0&-3&-1&-6\end{pmatrix}$&$0$&odd\\
            \bottomrule
        \end{tabular}
    \end{center}
    \caption{
        Invariants of the $p$-fold irregular dihedral cover of the $k$-twist spin of the $T(2,p)$ torus knot for even values of $k$.
        \label{spun_torus_dihedral.tab}
    }
\end{table}

As evidenced by \Cref{spun_torus_dihedral.tab}\footnote{Details of this computation can be found in \texttt{spun\_2\_bridge\_torus\_knots\_dihedral\_covers.ipynb} of \cite{Ruppik_Knot_groups_2021}.}, the $p$-fold irregular dihedral cover of $\sphere{4}$ branched along $\tau^{k}(2,p)$ is homeomorphic to, and likely diffeomorphic to, $\cs^{(p-1)/2} \sphere{2}\times \sphere{2}$ if $k$ is congruent to 0 mod 4, and $\cs^{(p-1)/2}(\mathbb{C}P^2 \cs \overline{\mathbb{C}P^2})$ if $k$ is congruent to 2 mod 4.
One can easily verify this by drawing the trisection diagram explicitly following the procedure in \cite{cahn2017singular,cahn2020dihedral} for small values of $p$, and the general case, while more involved, can likely be seen via the same methods. 


\subsection{Cyclic covers of twist-spun torus knots}

Next, we compute invariants of the $n$-fold cyclic branched cover of $\sphere{4}$, branched along the $k$-twist spin of a $T(2,p)$ torus knot.\footnote{Details of this computation can be found in \texttt{spun\_2\_bridge\_torus\_knots\_cyclic\_covers.ipynb} of \cite{Ruppik_Knot_groups_2021}.}
As noted at the beginning of this section, $n$-fold cyclic covers of $k$-twist spun knots are themselves spun manifolds, provided the twisting parameter $k$ is divisible by the degree $n$ of the cover.

We first investigate the cyclic covers of the ordinary (0-twist) spun trefoil.
The results of the computation are shown in \Cref{spun_trefoil_cyclic_covers.tab}, and see the same 6-fold periodicity in the homology groups as in the computation for branched covers of $\sphere{3}$ along the classical trefoil \cite[p.~150]{rolfsen2003knots}.
When $n$ is a multiple of 6, the rank of $H_2$ is nontrivial, and we compute the intersection form as well.

Unlike the 0-twist case, for the 2-twist spin of the trefoil, $\tau^{2} T(2,3)$, $\pi_{1}(\sphere{4} - \tau^{2} T(2,3))$ is not a classical knot group, because its Alexander ideal is not principal. However, we see simpler order 2 periodicity in $\pi_1$, $H_1$ and $H_2$ when we consider the 2-twist spin as opposed to the 0-twist spin, even as we vary $p$. The computation of invariants for the $2$-twist spin of the $T(2,p)$ torus knot, taking cyclic branched covers of degree up to $n=6$, is shown in \Cref{spun_torus_cyclic_covers.tab}.

\begin{table}
    \scriptsize
    \begin{center}
    \begin{tabular}{c|c|ccc|c|c|ccc}
        \toprule
        $p$&$k$& $n$ & Trisection parameters& $|\pi_1$|& $H_1$&$H_2$&Intersection form&$\sigma$&even/odd \\
        \midrule
        \midrule
        $3$&$0$&$2$&$(3;1,1,1)$&3& $\mathbb{Z}_3$&$\mathbb{Z}_3$&$()$&0&even\\
        \midrule
        $3$&$0$&$3$&$(6;2,2,2)$&8 $(Q_8)$& $\mathbb{Z}_2^2$&$\mathbb{Z}_2^2$&$()$&0&even\\
        \midrule
        $3$&$0$&$4$&$(9;3,3,3)$&24 $(SL_2(\mathbb{F}_3))$& $\mathbb{Z}_3$&$\mathbb{Z}_3$&$()$&0&even\\
        \midrule
        $3$&$0$&$5$&$(12;4,4,4)$&120 $(SL_2(\mathbb{F}_5))$ & 1 & 1&$()$ &0&even\\
        \midrule
        $3$&$0$&$6$&$(15;5,5,5)$&$\infty$ & $\mathbb{Z}^2$&$\mathbb{Z}^4$&$\begin{pmatrix}-10&9&1&0\\9&-2&0&-1\\1&0&0&0\\0&-1&0&0\end{pmatrix}$&0&even\\
        \midrule
        $3$&$0$&$7$&$(18;6,6,6)$&& 1&1&$()$&0&even\\
        \midrule
        $3$&$0$&$8$&$(21;7,7,7)$&& $\mathbb{Z}_3$&$\mathbb{Z}_3$&$()$&0&even\\
        \midrule
        $3$&$0$&$9$&$(24;8,8,8)$&& $\mathbb{Z}_2^2$&$\mathbb{Z}_2^2$&$()$&0&even\\
        \midrule
        $3$&$0$&$10$&$(27;9,9,9)$&& $\mathbb{Z}_3$&$\mathbb{Z}_3$&$()$&0&even\\
        \midrule
        $3$&$0$&$11$&$(30;10,10,10)$&& 1&1&$()$&0&even\\
        \midrule
        $3$&$0$&$12$&$(33;11,11,11)$&& $\mathbb{Z}^2$&$\mathbb{Z}^4$&$\begin{pmatrix}-10&5&14&0\\5&-6&-11&-1\\14&-11&-24&-1\\0&-1&-1&0\end{pmatrix}$&0&even\\
        \midrule
        $3$&$0$&$13$&$(36;12,12,12)$&& 1&1&$()$&0&even\\
        \midrule
        $3$&$0$&$14$&$(39;13,13,13)$&& $\mathbb{Z}_3$&$\mathbb{Z}_3$&$()$&0&even\\
        \bottomrule
    \end{tabular}
    \end{center}
    
    \vspace{0.5mm}
    
    \begin{flushleft}Generators of $H_2$ for intersection form computations:
    
    \vspace{0.5mm}

    \begin{center}
    \begin{tabular}{c|l}
         \toprule
         $n$ & $H_2$ generators  \\
         \midrule
         \midrule
         $6$& (1) $x_2^1+x_2^2+x_2^3+2x_3^3+x_4^3+5x_7^3+4x_0^4+3x_2^4+4x_4^4+x_7^4+x_2^5-x_3^5+2x_4^5$, \\
         & (2) $x_3^1+x_2^2-6x_3^3-2x_4^3-10x_7^3-8x_0^4-6x_2^4-8x_4^4+x_7^4+3x_0^5-x_4^5$, \\
         & (3) $x_5^2+x_6^3-x_3^4-x_7^4-x_0^5-x_2^5-x_4^5$,\\
         & (4) $x_5^1+x_6^2$ \\
         \midrule
         $12$
         & (1) $x_2^1+x_2^2+x_2^3+x_4^3+x_2^4+2x_{4}^{4}+x_2^5+2x_{4}^{5}-x_7^5+2x_{0}^{6}+x_2^6+x_4^6-2x_{7}^{6}+2x_{0}^{7}+x_2^7+4x_{0}^{8}+x_2^8$\\
         & $\phantom{(1)} +3x_{0}^{9}+x_2^9+x_4^9+2x_{7}^{9}+2x_{0}^{10}+x_2^{10}+2x_{4}^{10}+4x_{7}^{10}+3x_{0}^{11}+x_2^{11}-x_3^{11}+2x_{4}^{11}$\\
         &(2) $x_{3}^{1}+x_{2}^{2}-2x_{4}^{3}-2x_{4}^{4}-x_{7}^{4}-x_{4}^{5}+x_{7}^{5}-3x_{0}^{6}+x_{7}^{6}-3x_{0}^{7}-3x_{0}^{8}-x_{4}^{8}-2x_{0}^{9}-2x_{4}^{9}-6x_{7}^{9}-8x_{0}^{10}$\\
         & $\phantom{(2)} -2x_{4}^{10}+3x_{7}^{10}-x_{0}^{11}-x_{4}^{11}$\\
         &(3) $x_{4}^{1}-2x_{4}^{3}-3x_{4}^{4}+x_{7}^{4}-2x_{4}^{5}+3x_{7}^{5}-3x_{0}^{6}+4x_{7}^{6}-3x_{0}^{7}+x_{4}^{7}-6x_{0}^{8}-4x_{0}^{9}-2x_{4}^{9}-5x_{7}^{9}-5x_{0}^{10}$\\
         & $\phantom{(3)} -3x_{4}^{10}-4x_{7}^{10}-4x_{0}^{11}-2x_{4}^{11}$\\
         &(4) $x_{5}^{1}+x_{6}^{2}$\\
         \bottomrule
    \end{tabular}
    \end{center}
    
    \end{flushleft}
    \caption{Invariants of the $n$-fold cyclic cover of the 0-spun trefoil $T(2,3)$.\label{spun_trefoil_cyclic_covers.tab}}
\end{table}

\begin{table}
    \begin{center}
        \renewcommand{\arraystretch}{1.2}
        \begin{tabular}{c|c|cc|c|c|c}
            \toprule
            $p$ & $k$ & $n$ & Trisection parameters& $\pi_1$& $H_1$&$H_2$ \\
            \midrule
            \midrule
            $3$ & $2$ & $2$ & $(3;1,1,1)$ & $\langle x_2 \mid x_2^3\rangle$& $\mathbb{Z}_3$ & $\mathbb{Z}_3$\\
            \hline
            $3$&$2$&$3$&$(6;2,2,2)$&1& 1&1\\
            \hline
            $3$&$2$&$4$&$(9;3,3,3)$& $\langle x_2 \mid x_2^3\rangle$ & $\mathbb{Z}_3$&$\mathbb{Z}_3$\\
            \hline
            $3$&$2$&$5$&$(12;4,4,4)$&1& 1&1\\
            \hline
            $3$&$2$&$6$&$(15;5,5,5)$& $\langle x_2 \mid x_2^3\rangle$ & $\mathbb{Z}_3$&$\mathbb{Z}_3$\\
            \midrule
            \midrule
            $5$&$2$&$2$&$(3;1,1,1)$& $\langle x_2 \mid x_2^5\rangle$ & $\mathbb{Z}_5$&$\mathbb{Z}_5$\\
            \hline
            $5$&$2$&$3$&$(6;2,2,2)$&1& 1&1\\
            \hline
            $5$&$2$&$4$&$(9;3,3,3)$& $\langle x_2 \mid x_2^5\rangle$ & $\mathbb{Z}_5$&$\mathbb{Z}_5$\\
            \hline
            $5$&$2$&$5$&$(12;4,4,4)$&1& 1&1\\
            \hline
            $5$&$2$&$6$&$(15;5,5,5)$& $\langle x_2 \mid x_2^5\rangle$ & $\mathbb{Z}_5$&$\mathbb{Z}_5$\\
            \midrule
            \midrule
            $7$&$2$&$2$&$(3;1,1,1)$& $\langle x_2 \mid x_2^7\rangle$ & $\mathbb{Z}_7$&$\mathbb{Z}_7$\\
            \hline
            $7$&$2$&$3$&$(6;2,2,2)$&1& 1&1\\
            \hline
            $7$&$2$&$4$&$(9;3,3,3)$& $\langle x_2 \mid x_2^7\rangle$ & $\mathbb{Z}_7$&$\mathbb{Z}_7$\\
            \hline
            $7$&$2$&$5$&$(12;4,4,4)$&1& 1&1\\
            \hline
            $7$&$2$&$6$&$(15;5,5,5)$& $\langle x_2 \mid x_2^7\rangle$ & $\mathbb{Z}_7$&$\mathbb{Z}_7$\\
            \bottomrule
        \end{tabular}
    \end{center}
    \caption{
        Invariants of the $n$-fold cyclic cover of the $2$-twist spin of the $T(2,p)$ torus knot.
        \label{spun_torus_cyclic_covers.tab}
    }
\end{table}

\section{An infinite tower of simply-connected irregular covers of the Stevedore disk double}
\label{stevedore.sec}
	
Let $\Delta \subset \disk{4}$ be the standard 2 minima ribbon disk for the Stevedore knot $6_1$. The double $K = \Delta \cup_{\partial} \Delta  \subset \disk{4} \cup_{\partial} \disk{4} \cong \sphere{4}$ is a ribbon 2-knot with group $\pi_{1}(\sphere{4} - K) \cong \langle a, b \mid bab^{-1} = a^{2} \rangle \cong \ZZ \rtimes \ZZ[\frac{1}{2}]$, the Baumslag-Solitar group $BS(1,2)$.
Note that $\pi_{1}(\sphere{4} - K)$ is not a classical knot group, because its Alexander polynomial $\Delta_{K}(t) = 2t - 1$ is not symmetric under $t \mapsto t^{-1}$.

A tri-plane diagram of the disk double $K$ is given in \Cref{fig:stevedore_triplane}, together with a choice of Wirtinger generators. The red tangle, together with the black banded tangle, form a banded unlink diagram for $K$, and the green and blue tangles are obtained from the black tangle by resolving the bands.  After simplifying the Writinger relations, we get $\pi_1(\mathbb{S}^4-K,P)=\langle x_0,x_2 \mid x_0x_2x_0^{-1}x_2^{-1}x_0x_2^{-1}\rangle$.
An isomorphism with the presentation above is given by $a\mapsto x_2x_0^{-1}$ and $b\mapsto x_2$.

The Baumslag-Solitar groups $BS(1,m)$ are non-co-Hopfian, i.e., each such group contains a proper subgroup (and hence infinitely many) isomorphic to itself.  We use this to construct an infinite tower of trisected irregular branched covers of the Stevedore disk double, the first of which is determined by the standard 3-coloring of the disk.

For each odd $n$ we construct a map $\rho_n \colon BS(1,2)\rightarrow S_n$, with image isomorphic to a semi-direct product $\ZZ_n\rtimes\ZZ_m$, where $m$ is the multiplicative order of 2 mod n, as follows.
Let $\sigma=(1\;2\;\dots\;n)\in S_n$.
Since $n$ is odd, $\sigma^2$ is also an $n$-cycle.
Because $\sigma$ and $\sigma^2$ have the same cycle type, there exists a unique $\tau$ such that $\tau(1)=1$ and $\tau ^{-1} \circ \sigma\circ \tau=\sigma^2$.
(Note that we compose permutations right to left, so we left-compose by $\tau^{-1}$ and right-compose by $\tau$, in contrast with the group relation.)
Then we define $\rho_n$ on the generators of $BS(1,2)$ by $a\mapsto \sigma$ and $b\mapsto \tau$.  We work out $\rho_n$ explicitly for odd $n\leq 17$ in \Cref{stevedore_hom.tab}.

Let $X_K$ denote the $n$-fold unbranched cover of ${\mathbb{S}^4-K}$ corresponding to $\rho_n$.
Let $f_n \colon (X_K,P_1) \rightarrow (\mathbb{S}^4-K,P) $ be the corresponding covering map.
Then $\pi_1(X_K,P_1)\cong\operatorname{Im }(f_n)_*$.
Since $\rho(a^n)(1)=\operatorname{Id}(1)=1$ and $\rho(b)(1)=1$, both $a^n$ and $b$ lift to elements of $\pi_1(X_K,P_1)$, call them $A$ and $B$. 

We now show that $\pi_1(X_K,P_1)\cong \langle A,B \mid BAB^{-1} = A^2 \rangle \cong BS(1,2)$, with $(f_n)_*(A)=a^n$ and $(f_n)_*(B)=b$, using the Reidemeister-Schreier algorithm.
This implies that $BS(1,2)$ is non-co-Hopfian, and also implies that $BS(1,2)$ arises as the group of the preimage of the branch locus in each corresponding $n$-fold unbranched cover $X_K'$.
We then show that the corresponding unbranched covers $X_K'$ are simply-connected. 

\begin{proposition}
    The $n$-fold branched cover $X_K'$ of the Stevedore disk double corresponding to $\rho_n$ is simply-connected.
    Furthermore, the group of the corresponding unbranched cover is isomorphic to $BS(1,2)$.
\end{proposition}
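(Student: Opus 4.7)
The plan is to apply the trisected Reidemeister--Schreier \Cref{RS.alg} to the coloring $\rho_n$, shortcutting the combinatorial output by identifying the resulting groups directly in the semidirect product model $G = \pi_1(\sphere{4} - K) \cong \mathbb{Z}[\tfrac{1}{2}] \rtimes \mathbb{Z}$, under which $a \leftrightarrow (1,0)$ and $b \leftrightarrow (0,1)$. First I would note that the fundamental group $H$ of the unbranched cover is $\rho_n^{-1}(\mathrm{Stab}(1)) \leq G$, which has index $n$ since $\rho_n(a) = \sigma$ is an $n$-cycle and hence $\rho_n$ acts transitively. A Schreier transversal is $\{1, a, a^2, \ldots, a^{n-1}\}$, matching the tree used in Step (2) of \Cref{RS.alg} and producing the claw relations $a^j = 1$ for $1 \leq j \leq n-1$.

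The central computation is that $\rho_n((r,k))$ sends $1$ to $1 + r \pmod n$ for every $(r, k) \in \mathbb{Z}[\tfrac{1}{2}] \rtimes \mathbb{Z}$, where $r$ has a well-defined residue in $\mathbb{Z}/n\mathbb{Z}$ because $2$ is a unit modulo the odd number $n$. This follows by writing $(r, k) = b^{-s} a^{2^s r} b^{s+k}$ for $s$ large enough that $2^s r \in \mathbb{Z}$, applying $\rho_n$, and using the formula $\tau^{-s}(j) = 2^{-s}(j - 1) + 1 \pmod n$ derived from $\tau(j) = 2(j - 1) + 1$. Consequently $H = n\mathbb{Z}[\tfrac{1}{2}] \rtimes \mathbb{Z}$, and the map $(r,k) \mapsto (nr, k)$ furnishes an explicit isomorphism $BS(1,2) = \mathbb{Z}[\tfrac{1}{2}] \rtimes \mathbb{Z} \xrightarrow{\cong} H$ that sends the generators $a, b$ of $BS(1,2)$ to $a^n, b \in H$ respectively.

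To pass to the branched cover, Step (4) of \Cref{RS.alg} adds one relation per cycle of each Wirtinger meridian. Taking $b = x_2$, the defining condition $\tau(1) = 1$ produces the length-one cycle $(1)$, whose associated meridian relation is simply $b = 1$ in the branched cover. Substituting into $b \cdot a^n \cdot b^{-1} = a^{2n}$ forces $a^n = a^{2n}$, hence $a^n = 1$. Since $a^n$ and $b$ generate $H$, this already trivializes $\pi_1$ of the branched cover; the remaining meridian relations, from the longer cycles of $\tau$ and from the cycles of the other Wirtinger meridian $x_0 = b a^{-1}$, are then automatic.

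The main point to verify carefully is that the path-lifting and claw conventions of \Cref{RS.alg} align with the semidirect product description I have used, i.e., that the element called $b$ in the simplified presentation $\langle a, b \mid bab^{-1} = a^2 \rangle$ really is the loop realized as the lift of the Wirtinger meridian $x_2$ at $P_1$. Once this bookkeeping is pinned down, the semidirect product viewpoint sidesteps the explosion of $n$ Schreier conjugates $a^k b a^{-k}$ that the raw algorithm produces, and delivers both statements of the proposition simultaneously.
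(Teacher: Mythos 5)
Your proposal is correct, but it takes a genuinely different route from the paper. The paper actually runs the Reidemeister--Schreier machinery: it uses the Schreier transversal $\{1,a,a^2,\dots,a^{n-1}\}$ to get the claw relations $a^1=\dots=a^{n-1}=1$, writes out all $n$ lifts $r_1,\dots,r_n$ of the single relator $bab^{-1}a^{-2}$, and simplifies by hand to $\langle A,B\mid BAB^{-1}=A^2\rangle$ with $A=a^n$, $B$ a lift of $b$; for the branched cover it then imposes the two fixed-point meridian relations coming from $\rho_n(x_2)(1)=1$ and $\rho_n(x_0)(n)=n$ and deduces $A=B=1$. You instead bypass the presentation entirely by identifying $\pi_1$ of the unbranched cover as $\rho_n^{-1}(\operatorname{Stab}(1))$ inside the model $\ZZ[\tfrac12]\rtimes\ZZ$, computing this stabilizer preimage to be $n\ZZ[\tfrac12]\rtimes\ZZ$, and exhibiting the isomorphism $(r,k)\mapsto(nr,k)$; for the branched cover you use only the single relation $b=1$ from the fixed point of $\tau$ together with $ba^nb^{-1}=a^{2n}$ to kill everything. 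This is cleaner and more conceptual, and it makes the non-co-Hopfian structure transparent; what it does not produce is the explicit presentation in the lifted Wirtinger generators $x_i^j$ that the paper's pipeline (\Cref{RS.alg} feeding into the homology and intersection-form computations) requires, which is presumably why the paper grinds through the lifts. Two bookkeeping slips to fix: with the paper's conventions ($\tau^{-1}\circ\sigma\circ\tau=\sigma^2$, composing right to left, and the monodromy acting so that the lift of a product follows the letters in order) one gets $\tau(j)=2^{-1}(j-1)+1 \pmod n$ rather than $2(j-1)+1$, and the endpoint of the lift of $(r,k)$ at $P_1$ is $1+2^{-k}r$ rather than $1+r$; neither affects your conclusion, since $2^{\pm k}$ is a unit mod the odd integer $n$ and the stabilizer condition is still exactly $r\in n\ZZ[\tfrac12]$. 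Also note the paper's identification gives $x_0=a^{-1}b$, not $ba^{-1}$, though you never actually use this.
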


\begin{proof}
    We first use the Reidemeister-Schreier algorithm to compute $\pi_1(X_K,P_1)$.
    d
    We first construct $\rho_n$ explicitly as follows.
    First, $\rho_n(a)=\sigma=(1 ,2 ,\dots, n)$, so $\sigma^2=(1, 3, \dots , n, 2 , 4, \dots , n-1)$.
    Next we construct $\rho_n(b)=\tau$ such that $\tau(1)=1$ and $\tau ^{-1} \circ \sigma\circ \tau=\sigma^2$.
    To do this, assemble a permutation in 2-row notation by placing $\sigma$ above $\sigma^2$.
    The resulting permutation below, with $k=(n+3)/2$, is $\tau^{-1}$:
    \[
        \tau^{-1}
        =
        \begin{pmatrix}
            1 &2 &3 & \dots &k-1  &k &k+1& k+2&\dots &n \\
            1 &3 &5 & \dots &n    &2 &4  & 6  &\dots &n-1 
        \end{pmatrix}.
    \]
    
    A priori, the generators of $\pi_1(X_K,P_1)$ are $a^1$,$a^2$, ... , $a^n$, $b^1$, $b^2$, ... , $b^n$.
    Since $\rho(a)=\sigma=(1,2,\dots,n)$, the words $\{a^1, a^1a^2, a^1a^2a^3,\dots, a^1a^2\dots a^{n-1}\}$ satisfy the Schreier condition.
    It follows from the corresponding ``claw'' relations that
    $$a^1=a^2=\dots=a^{n-1}=1.$$
    
    Set $A=a^n$.
    
    We next write down the $n$ lifts $r_1, \dots, r_n$ of the relation $r=bab^{-1}a^{-1}a^{-1}=1.$ Note that superscripts of $a$ and $b$ below denote generator labels, not exponents.  We obtain:
    \begin{align}
    1=r_1&=b^1a^1(b^3)^{-1}(a^2)^{-1}(a^1)^{-1}& \label{bs_reln1.eq}\\
    1=r_i&=b^ia^{k+s}(b^{i+2})^{-1}(a^{i+1})^{-1}(a^i)^{-1} &\text{ for }2\leq i \leq n-3, \text{ }i \text{ even, }s=i/2-1\label{bs_reln2.eq}\\
    1=r_i&=b^ia^{i-1}(b^{i+2})^{-1}(a^{i+1})^{-1}(a^i)^{-1} &\text{ for } 3\leq i\leq n-2, \text{ }i  \text{ odd}\label{bs_reln3.eq}\\
    1=r_{n-1}&=b^{n-1}a^n (b^1)^{-1}(a^n)^{-1}(a^{n-1})^{-1}&\label{bs_reln4.eq}\\
    1=r_n&=b^na^{k-1}(b^2)^{-1}(a^1)^{-1}(a^7)^{-1}&\label{bs_reln5.eq}
    \end{align}
    
    From the relations in \Cref{bs_reln1.eq} and \Cref{bs_reln3.eq}, and applying the relation $a^i=1$ for $1\leq i\leq n-1$, we have:
    
    \[
        b^1=b^3=\dots=b^{n}.
    \]
    
    From the relation \Cref{bs_reln2.eq}, and applying the relation $a^i=1$ for $1\leq i\leq n-1$, we have:
    \[
        b^2 = b^4 = \dots = b^{n-1}.
    \]
    
    Set $B=b^2$ and $C=b^1$.
    
    Similarly, from relation \ref{bs_reln4.eq}, we have:
    $$1=BAC^{-1}A^{-1},$$
    and from relation \ref{bs_reln5.eq}, we have:
    $$1=CB^{-1}A^{-1}.$$
    
    Combining the last two relations gives $BAB^{-1}=A^2$.  Therefore $\pi_1(X_K,P_1) = \langle A, B \mid BAB^{-1} = A^2 \rangle \cong BS(1,2)$. 
    
    Next, we show that the corresponding branched cover is simply connected.
    For each $k$ cycle of $\rho_n(x_i)$, where $x_i$ is a Wirtinger generator shown in \Cref{fig:stevedore_triplane}, we lift the relation $(x_i)^k=1$.
    In this case, the two 1-cycles corresponding to the fixed points $\rho_n(x_0)(n)=\rho_n(a^{-1}x_2)(n)= \tau\circ \sigma^{-1}(n)=n$ and $\rho_n(x_2)(1)=\tau(1)=1$ are sufficient.
    The lift of the corresponding relations $(x_0)^1=1$ and $(x_2)^1=1$ are $(x_0^n)^1=1$ and $(x_2^1)^1=1$ respectively.
    Note $x_0^n=(a^{n-1})^{-1}x_2^{n-1}=b^2=B$ by relation \ref{bs_reln2.eq} and the claw relation; in addition, $x_2^1=b^1=C$.  Since $B=C=1$ and $AB=C$, we have $A=B=1$.
    Therefore $\pi_1(X_K')=1.$
\end{proof}

We compute $\rho_n$ for odd $p \leq 17$, and use our algorithm to compute the fundamental group, trisection parameters, homology groups, and intersection form in each case. This is summarized in \Cref{stevedore.tab}.\footnote{Details of this computation can be found in \texttt{stevedore\_group\_trisection\_other\_homs-intersection\_form.ipynb} of \cite{Ruppik_Knot_groups_2021}.}
We can again identify the covers as sums of $\mathbb{C}P^2 \cs \overline{\mathbb{C}P}^2$ or $\sphere{2} \times \sphere{2}$ up to homeomorphism, for $n\leq 17$, and wonder whether the covers corresponding to $\rho_n$ are diffeomorphic to sums of $\mathbb{C}P^2 \cs \overline{\mathbb{C}P}^2$ or $\sphere{2}\times \sphere{2}$ in general.

\begin{figure}[htbp]
	\centering
	\begin{tikzpicture}
		\node[anchor=south west,inner sep=0] at (0,0){\includegraphics[width=12cm]{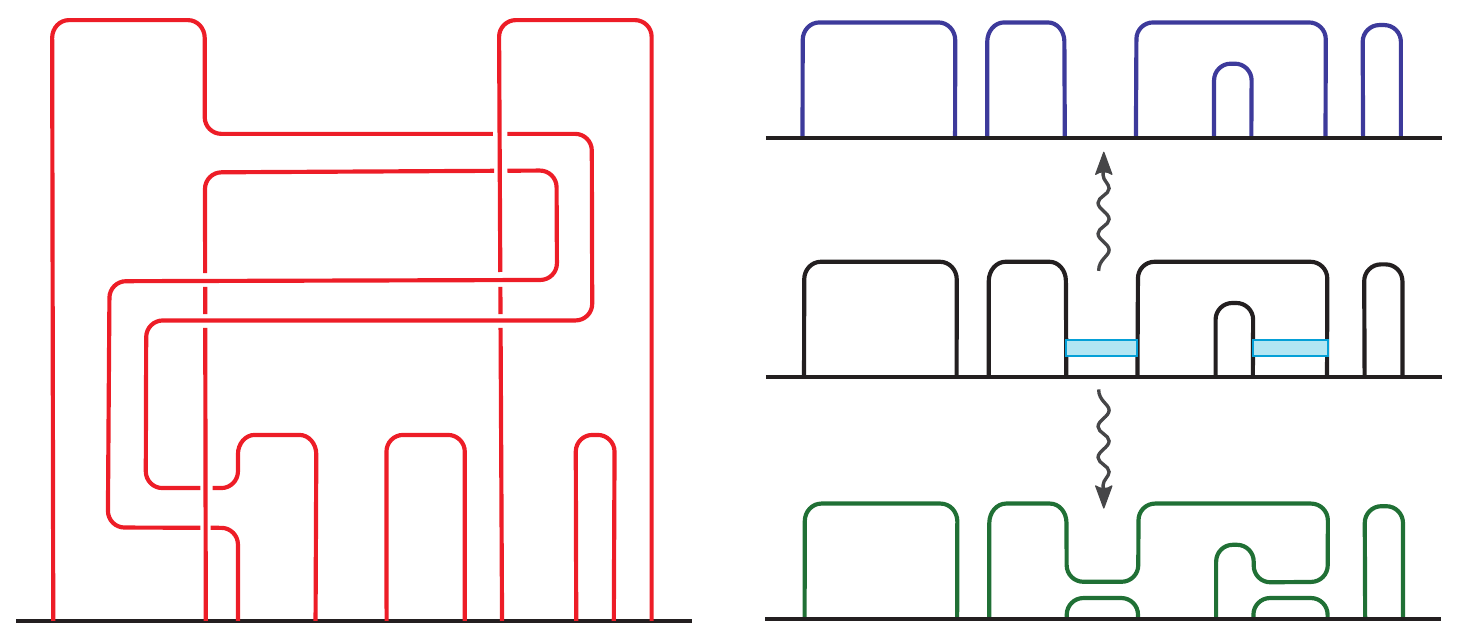}};
	\end{tikzpicture}
	\caption{
		Tri-plane diagram of the Stevedore disk double. Wirtinger generators are labelled $\{x_0,\dots,x_9\}$ from left to right along the bridge sphere.
		\label{fig:stevedore_triplane}
	}	
\end{figure}

\begin{table}[htbp]
    \begin{center}
        \renewcommand{\arraystretch}{1.1}
        \begin{tabular}{c|c|c|c}
            \toprule
            $n$& $\rho_n(a)$&$\rho_n(b)=\rho_n(x_2)$&$\rho_n(x_0)$\\
            \midrule
            \midrule
            $3$& $(1,2,3)$&$(2,3)$&$(1,2)$\\
            \hline
            $5$&$(1,2,3,4,5)$&$(2,4,5,3)$&$(1,3,4,2)$\\
            \hline
            $7$&$(1,2,3,4,5,6,7)$&$(2,5,3)(4,6,7)$&$(1,4,2)(3,5,6)$\\
            \hline
            $9$&$(1,2,3,4,5,6,7,8,9)$&$(2,6,8,9,5,3)(4,7)$&$(1,5,7,8,4,2)(3,6)$\\
            \hline
            $11$&$(1,2,3,4,5,6,7,8,9,10,11)$&$(2,7,4,8,10,11,6,9,5,3)$&$(1,6,3,7,9,10,5,8,4,2)$\\ 
            \hline
            $13$ 
            & $(1,2,3,4,5,6,7,$
            & $(2,8,11,6,10,12,13,7,4,9,5,3)$
            & $(1,7,10,5,9,11,12,$
            \\
            & $8,9,10,11,12,13)$ 
            & 
            & $6,3,8,4,2)$
            \\
            \hline
            $15$&$(1,2,3,4,5,6,7,8,9,$&$(2,9,5,3)(4,10,13,7)$&$(1,8,4,2)(3,9,12,6)$\\
            &$10,11,12,13,14,15)$&$(6,11)(8,12,14,15)$&$(5,10)(7,11,13,14)$\\
            \hline
            $17$&$(1,2,3,4,5,6,7,8,9,10,11,$&$(2,10,14,16,17,9,5,3)$&$(1,9,13,15,16,8,4,2)$\\
            &$12,13,14,15,16,17)$&$(4,11,6,12,15,8,13,7)$&$(3,10,5,11,14,7,12,6)$\\
            \bottomrule
        \end{tabular}
    \end{center}
    \caption{
        Images of $a, b, x_0$ and $x_2$ under $\rho_n$, where $\rho_n \colon BS(1,2)\rightarrow S_n$ is the homomorphism defined in \Cref{stevedore.sec}. 
        \label{stevedore_hom.tab}
    }
\end{table}

\begin{table}[htbp]
    \begin{center}
        \renewcommand{\arraystretch}{1.3}
        \begin{tabular}{c|c|c|c|ccc}
            \toprule
            $n$ & Trisection  &$\pi_1$  & $\text{rk}(H_2)$  &Intersection  &Parity & $\sigma$ \\
            &Parameters& &&Form&& \\
            \midrule
            \midrule
            $3$&$(3; 0, 0, 1)$&1&2&$\begin{pmatrix}1&-2\\-2&3\end{pmatrix}$&odd&0\\
            \hline
            $5$&$(11;2,2,5)$&1&2&$\begin{pmatrix}15&-1\\-1&0\end{pmatrix}$&odd&0\\
            \hline
            $7$&$(14;2,2,6)$&1&4&$\begin{pmatrix}4&5&1&-6\\5&6&1&-6\\ 1&1&0&-1\\-6&-6&-1&10\end{pmatrix}$&even&0\\
            \hline
            $9$&$(22;4,4,10)$&1&4&$\begin{pmatrix} 0&-1 & 0&  1\\ -1& 9&  0& -18\\ 0 &  0  &0& -1\\ 1& -18& -1&43\end{pmatrix}$&odd&0\\
            \hline
            $11$&$(35;8,8,17)$&1&2&$\begin{pmatrix}0&1\\1&187\end{pmatrix}$&odd&0\\
            \hline
            $13$&$(43;10,10,21)$&1&2&$\begin{pmatrix}0&1\\1&355\end{pmatrix}$&odd&0\\
            \hline
            $15$&$(36;6,6,16)$&1&8&$\begin{pmatrix}
            7 & -7 &  1 & -1 & 15 & -3 &  3 &  0\\
            -7 & 17 & -1 &  0& -23&   7&  -7 &  0\\
            1&  -1 &  0&  0 &  2 & -1 &  1 &  0\\
            -1 &  0 &  0 &  0&  -1 &  0&   0 &  0\\
            15& -23&   2 & -1 & 38& -10&  10&   0\\
            -3 &  7 & -1 &  0& -10&   5 & -4 & -2\\
            3&  -7 &  1 &  0 & 10 & -4 &  9 & -5\\
            0 &  0 &  0&   0&   0&  -2 & -5&   8
            \end{pmatrix}$&odd&0\\
            \hline
            $17$&$(54;12,12,26)$&1&4&$\begin{pmatrix}
            0&  0& 1 & 0\\
            0 & 0 & 2 & 1\\
            1 & 2 &67& 26\\
            0 & 1 &26 &30
            \end{pmatrix}$&odd&0\\
            \bottomrule
        \end{tabular}
    \end{center}
    \caption{
        Invariants of $n$-fold irregular covers of the Stevedore disk double for odd $n\leq 17$.
        Generators of $H_2$ corresponding to the intersection forms above can be found in \Cref{stevedore_H2.tab}.
        \label{stevedore.tab}
    }
\end{table}

\begin{table}[htbp]
    \begin{center}
        \scriptsize
        \renewcommand{\arraystretch}{1.3}
        \begin{tabular}{c|l}
            \toprule
            $n$&  $H_2$ generators \\
            \midrule
            \midrule
            $3$&(1) $x_{3}^{2}-x_{4}^{2}$\\
            &(2) $x_{5}^{2}$\\
            \hline
            $5$ & (1) $x_{2}^{2}-3x_{5}^{2}+x_{4}^{3}-3x_{5}^{3}+3x_{2}^{4}-x_{3}^{4}+x_{4}^{4}-2x_{5}^{4}$\\
            &(2) $x_{3}^{2}+x_{5}^{2}+x_{5}^{3}-x_{2}^{4}+x_{3}^{4}-x_{4}^{4}+x_{5}^{4}$\\
            \hline
            $7$ & (1) $x_{4}^{2}-x_{3}^{3}+x_{4}^{4}-x_{3}^{6}+2x_{4}^{6}$\\
            & (2) $x_{5}^{2}+x_{5}^{3}+x_{4}^{4}-x_{3}^{6}+2x_{4}^{6}$\\
            & (3) $x_{6}^{2}+x_{6}^{3}-x_{7}^{3}$\\
            & (4) $x_{2}^{3}-x_{5}^{3}-x_{4}^{4}+2x_{3}^{6}-3x_{4}^{6}$\\
            \hline
            $9$ & (1) $x_{4}^{2}+x_{3}^{6}+x_{5}^{6}$\\
            & (2) $x_{5}^{2}-x_{2}^{5}+x_{5}^{5}+x_{5}^{8}$\\
            & (3) $-x_{6}^{4}+x_{7}^{4}$\\
            & (4) $x_{2}^{3}+2x_{5}^{3}-x_{3}^{4}+x_{4}^{4}+3x_{2}^{5}-x_{5}^{5}+x_{5}^{6}-2x_{5}^{8}$\\
            \hline
            $11$ 
            & (1) $x_{4}^{2}+x_{3}^{7}+x_{5}^{7}+2x_{6}^{8}+2x_{7}^{10}$\\
            & (2) $4x_{4}^{3}-8x_{5}^{3}-2x_{4}^{4}-7x_{5}^{4}-8x_{2}^{5}-5x_{4}^{5}-6x_{5}^{5}-3x_{4}^{6}-7x_{2}^{7}+5x_{3}^{7}-5x_{4}^{7}+x_{5}^{7}$ \\
            & \phantom{(2)} $+3x_{3}^{8}-4x_{4}^{8}+2x_{3}^{9}-5x_{4}^{9}+x_{3}^{10}-5x_{4}^{10}$\\
            \hline
            $13$ & (1) $x_{6}^{8}-2x_{6}^{10}+x_{7}^{11}-2x_{7}^{12}$\\
            & (2) $5x_{4}^{3}-10x_{5}^{3}-2x_{4}^{4}-9x_{5}^{4}-10x_{2}^{5}-6x_{4}^{5}-8x_{5}^{5}-3x_{4}^{6}-7x_{5}^{6}-9x_{2}^{7}-6x_{4}^{7}+6x_{3}^{8}$\\
            & \phantom{(2)} $+x_{5}^{8}-8x_{2}^{9}+4x_{3}^{9}-6x_{4}^{9}+3x_{3}^{10}-5x_{4}^{10}+6x_{3}^{11}-6x_{4}^{11}+4x_{5}^{11}+x_{3}^{12}-6x_{4}^{12}$\\
            \hline
            $15$ 
            & (1) $x_{4}^{2}-x_{3}^{5}+x_{4}^{5}-x_{5}^{6}+x_{4}^{8}-x_{3}^{14}+x_{4}^{14}$\\
            & (2) $x_{5}^{2}+x_{5}^{3}-x_{5}^{5}+x_{5}^{6}-x_{4}^{8}-x_{5}^{10}+x_{3}^{14}-x_{4}^{14}$\\
            & (3) $x_{6}^{2}+x_{6}^{3}+x_{6}^{5}-x_{7}^{5}$\\
            & (4) $x_{7}^{2}+x_{6}^{3}$\\
            & (5) $x_{3}^{3}+x_{4}^{5}+2x_{5}^{5}-2x_{5}^{6}+2x_{4}^{8}+x_{5}^{10}-2x_{3}^{14}+2x_{4}^{14}$\\
            & (6) $x_{3}^{4}-x_{5}^{5}+x_{4}^{7}-x_{4}^{8}-x_{5}^{10}-x_{3}^{12}$\\
            & (7) $x_{5}^{4}+x_{5}^{5}+x_{5}^{7}+x_{4}^{8}-x_{2}^{10}+x_{5}^{10}-x_{3}^{12}-2x_{3}^{14}+2x_{4}^{14}-x_{5}^{14}$\\
            & (8) $x_{2}^{7}-x_{5}^{7}+x_{2}^{10}+2x_{3}^{12}+2x_{3}^{14}-2x_{4}^{14}+x_{5}^{14}$\\
            \hline
            $17$ 
            & (1) $x_{6}^{14}+x_{7}^{16}$\\
            & (2) $x_{5}^{2}-x_{2}^{5}-x_{4}^{5}-x_{2}^{9}-x_{4}^{9}+x_{5}^{9}+x_{3}^{10}+x_{5}^{10}+x_{6}^{10}-x_{2}^{14}+x_{3}^{14}-x_{4}^{14}$ \\
            & \phantom{(2)} $+x_{5}^{14}-x_{6}^{14}+x_{7}^{14}-x_{2}^{16}-x_{4}^{16}-x_{7}^{16}$ \\
            & (3) $x_{2}^{3}+4x_{4}^{4}+x_{2}^{5}+3x_{4}^{5}-x_{5}^{5}+2x_{4}^{6}-3x_{5}^{6}+3x_{4}^{7}-x_{5}^{7}-x_{3}^{8}+3x_{4}^{9}-x_{5}^{9}-3x_{3}^{10}-4x_{5}^{10}+3x_{3}^{11}$ \\
            & \phantom{(3)} $+x_{4}^{11}+x_{3}^{12}+x_{4}^{12}-x_{2}^{13}-x_{3}^{13}+x_{4}^{13}+2x_{2}^{14}-3x_{3}^{14}+3x_{4}^{14}-x_{5}^{14}+x_{2}^{16}+5x_{4}^{16}+x_{5}^{16}$\\
            & (4) $x_{4}^{3}+x_{4}^{4}+2x_{4}^{5}-3x_{5}^{6}+2x_{4}^{7}-2x_{5}^{7}+x_{3}^{8}-x_{4}^{8}+2x_{4}^{9}-2x_{3}^{10}-x_{5}^{10}+2x_{3}^{11}-x_{4}^{11}+x_{3}^{12}$ \\
            & \phantom{(4)} $-x_{4}^{12}-2x_{2}^{13}-x_{4}^{13}+3x_{2}^{14}-2x_{3}^{14}+2x_{4}^{14}+x_{5}^{14}+3x_{5}^{16}$\\
            \bottomrule
        \end{tabular}
    \end{center}
    \caption{
        Generators of $H_2$ of the branched cover of the Stevedore disk double corresponding to intersection form computations in \Cref{stevedore.tab} for odd $n\leq 17$.
        \label{stevedore_H2.tab}
    }
\end{table}

\section{Suciu's knots}
\label{sec:suciu}

Suciu \cite{suciu1985ribbon} constructed an infinite family of ribbon 2-knots $R_{k} \colon \sphere{2} \hookrightarrow \sphere{4}$ in the 4-sphere (indexed by the natural numbers $k \in \mathbb{N}_{\ge 1}$), each with $\pi_{1}(\sphere{4} - R_{k})$ isomorphic to the group of the trefoil knot $T_{2, 3} \colon \sphere{1} \hookrightarrow \sphere{3}$, and which can be distinguished by the action of $\pi_1$ on $\pi_2(\sphere{4} - R_{k})$. 

The fibration of the trefoil complement by punctured tori induces the following presentation of $\pi_{1}(\sphere{3} - T_{2, 3})$ as an HNN-extension of the free group $\langle u, v \rangle$ of rank 2 of the fiber by the cyclic group $\langle t \rangle$ whose generator is a meridian to the knot.
\[
    \langle
    t, u, v \mid 
    t u t^{-1} = v,
    t v t^{-1} = u^{-1} v
    \rangle
\]
In particular, $R_{1}$ is the spun trefoil knot with meridian $t$.
The other knots appear via modifications of the monodromy, and they have different meridians $t_{k} = u^{k} t$.

Kanenobu and Sumi later showed that the $R_k$ can also be distinguished by the fundamental groups of their double branched covers \cite{kanenobu2020suciu}.
This motivates the question of which other branched covers distinguish the Suciu knots.  

A $7$-bridge trisection of each $R_k$, derived from Suciu's ribbon presentation \cite[Fig.~6]{suciu1985ribbon}, is given in \cite{ruppik2022thesis}.
A bridge-position banded unlink diagram for $R_k$ is shown in \Cref{fig:Suciu_R_3_banded_bridge_position}.
In our figures, we illustrate the banded bridge splitting and resulting bridge trisection for $k=3$; increasing this parameter $k \in \mathbb{N}$ amounts to adding additional linking of the orange fusion band with the minima $c$ and $d$.
The three tri-plane diagrams resulting from the banded bridge splitting are pictured in \Cref{fig:Suciu_R_3_bridge_trisection}.

\begin{figure}
	\centering
	\begin{tikzpicture}
		\node[anchor=south west,inner sep=0] at (0,0){\includegraphics[width=12cm]{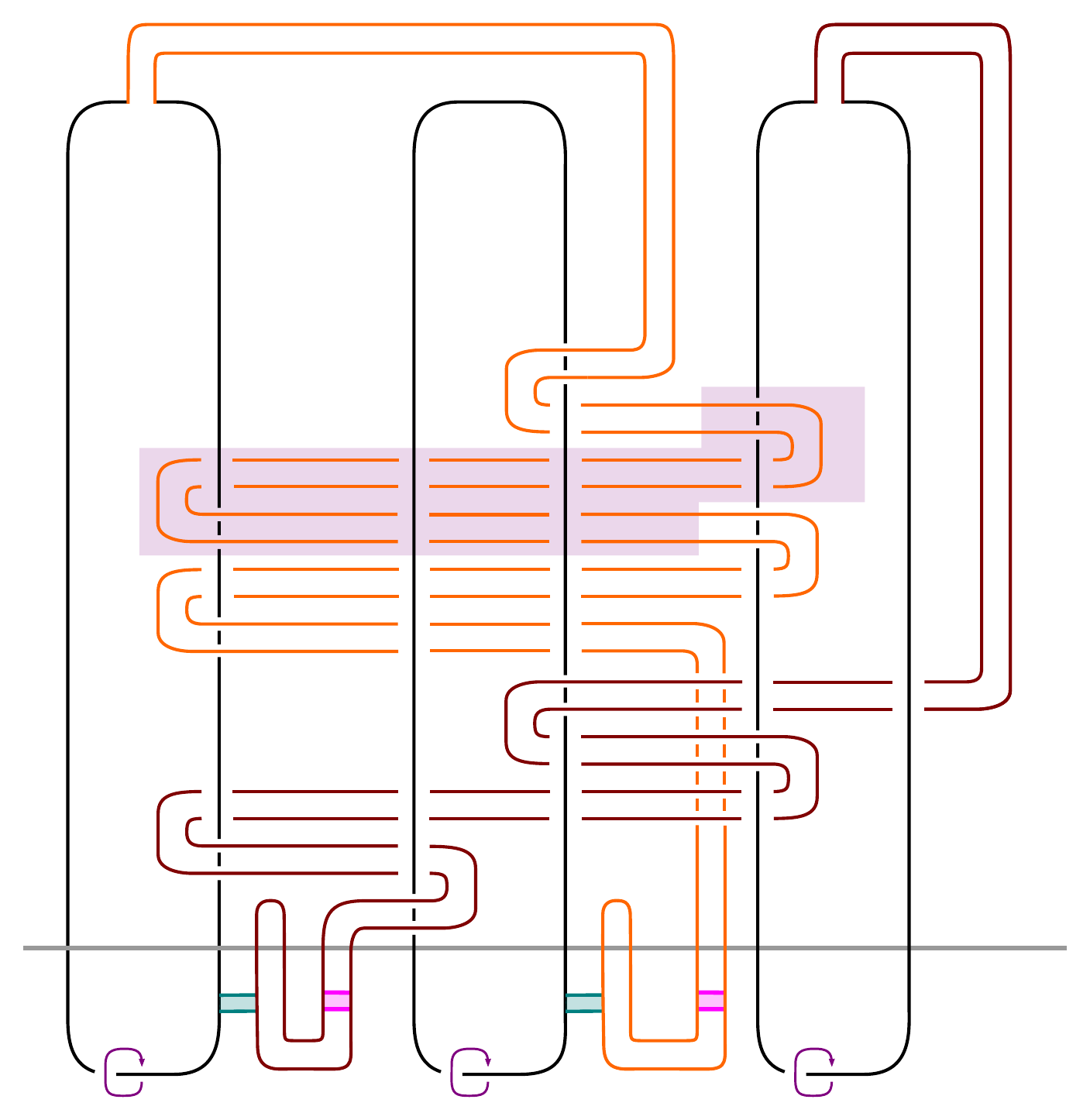}};
		\node at (1.0, 0.1) {\color{myPatriarch} $d$};
		\node at (4.8, 0.1) {\color{myPatriarch} $t_{k}$};
		\node at (8.6, 0.1) {\color{myPatriarch} $c$};
	\end{tikzpicture}
	\caption[Banded bridge position of Suciu's knot $R_{3}$]{
		A banded bridge splitting of Suciu's 2-knot $R_{k} \colon \sphere{2} \hookrightarrow \sphere{4}$ for $k = 3$ \cite{suciu1985ribbon}.
		The lightly shaded box indicates which part of the orange fusion band needs to be duplicated to increase the parameter $k$.
		\label{fig:Suciu_R_3_banded_bridge_position}
	}	
\end{figure}

\begin{figure}[htbp]
	\centering
	\begin{tikzpicture}
		\node[anchor=south west,inner sep=0] at (0,0){\includegraphics[width=16cm]{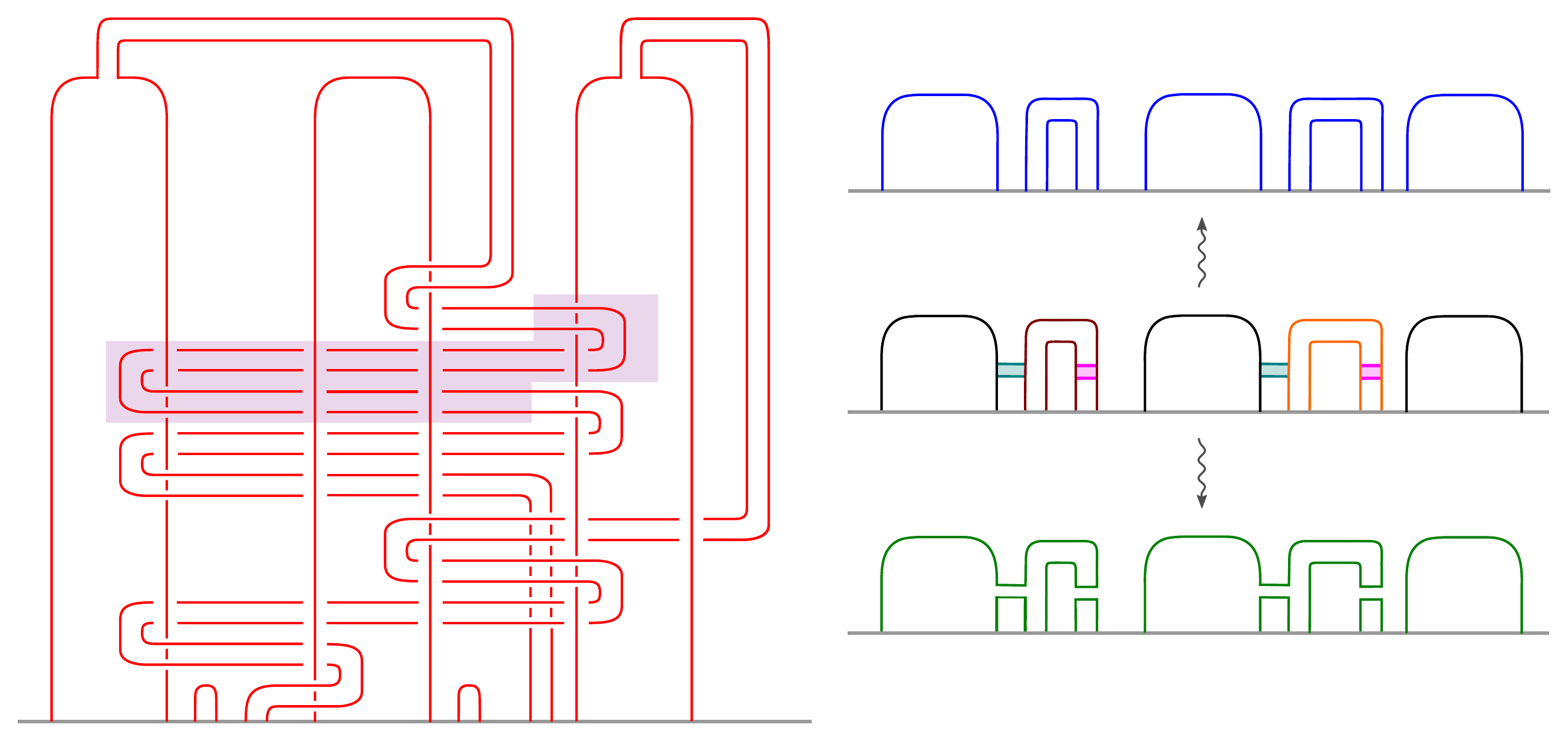}};
	\end{tikzpicture}
	\caption[tri-plane diagram of 7-bridge trisection of Suciu's knot $R_{3}$]{
		The three trivial tangles of a 7-bridge trisection of Suciu's knot $R_{k}$, for $k = 3$. 

		\label{fig:Suciu_R_3_bridge_trisection}
	}
\end{figure}

\begin{table}[htbp]
    \begin{center}
    \scriptsize
    \renewcommand{\arraystretch}{1.4}
        \begin{tabular}{c|c|c|c|c|c|c}
            \toprule
            $n$ &$k$ & Trisection  & $\pi_1$ & $|\pi_1|$ & $H_1$ & $H_2$ \\
            &&Parameters&&&& \\
            \midrule
            \midrule
            2&1& $(6;2,2,2)$& $\langle x_{12}^1 \mid (x_{12}^1)^3 \rangle$&3&$\mathbb{Z}_3$&$\mathbb{Z}_3$\\
            \hline
            2&2&$(6;2,2,2)$&$\langle x_6^1, x_{12}^1 \mid x_{12}^1x_6^1 (x_{12}^1)^{-2}x_6^1, (x_6^1)^{-1}x_{12}^1x_6^1 x_{12}^1 (x_6^1)^{-1} \rangle$ &24 & $\mathbb{Z}_3$&$\mathbb{Z}_3$ \\
            \hline
            2&3&$(6;2,2,2)$&$\langle x_6^1, x_{12}^1 \mid (x_6^1)^{-1} (x_{12}^1)^2 x_6^1 (x_{12}^1)^2 (x_6^1)^{-1},$&360&$\mathbb{Z}_3$&$\mathbb{Z}_3$\\
            &&& $x_{12}^1 (x_6^1)^{-1} x_{12}^1 (x_6^1)^2 x_{12}^1 (x_6^1)^{-1} \rangle$&&&\\
            \midrule
            \midrule
            3&1&$(12;4,4,4)$&$\langle x_{12}^1, x_{13}^1 \mid (x_{12}^1)^{-1} x_{13}^1 (x_{12}^1)^{-1} (x_{13}^1)^{-1}, (x_{13}^1)^{-1} (x_{12}^1)^2 (x_{13}^1)^{-1} \rangle$&8&$\mathbb{Z}_2^2$&$\mathbb{Z}_2^2$\\
            \hline
            3&2&$(12;4,4,4)$&$\langle x_6^1, x_7^1, x_{12}^1 \mid (x_{12}^1)^{-1} (x_7^1)^{-1} x_6^1 x_{12}^1 (x_6^1)^{-1} (x_7^1)^{-1} x_6^1 (x_{12}^1)^{-1}, $&960&$\mathbb{Z}_2^2$&$\mathbb{Z}_2^2$\\
            &&&$(x_7^1)^{-1} x_6^1 (x_{12}^1)^{-1}(x_6^1)^{-1} x_{12}^1 (x_6^1)^{-1} x_7^1 (x_{12}^1)^{-1},$&&&\\
            &&& $ (x_{12}^1)^{-1} x_6^1 x_{12}^1 ((x_7^1)^{-1} x_6^1)^2 (x_{12}^1)^{-1} x_7^1 x_{12}^1 (x_6^1)^{-1} x_7^1 \rangle $ &&&\\
            \hline
            3&3&$(12;4,4,4)$&$\langle x_6^1, x_{12}^1, x_{13}^1 \mid (x_6^1)^{-1} (x_{13}^1)^2((x_6^1)^{-1} (x_{13}^1)^{-1} x_6^1 (x_{12}^1)^{-1})^2, $&&$\mathbb{Z}_2^2$&$\mathbb{Z}_2^2$\\
            &&& $(x_{12}^1)^{-1} x_{13}^1 (x_6^1)^{-1} (x_{13}^1)^2 x_6^1 (x_{12}^1)^{-1} (x_6^1)^{-1} (x_{13}^1)^{-3} x_6^1, $  &&&\\ 
            &&& $x_{12}^1 (x_6^1)^{-1}x_{13}^1 (x_6^1)^2 (x_{12}^1)^{-2} (x_6^1)^{-1} x_{13}^1 x_6^1 (x_{12}^1)^3 (x_6^1)^{-1} \rangle $&&&\\
            \midrule
            \midrule
            4&1&$(18;6,6,6)$&$\langle x_{12}^1, x_{12}^2 \mid x_{12}^1 (x_{12}^2)^{-2} x_{12}^1 x_{12}^2, x_{12}^1 x_{12}^2 (x_{12}^1)^{-2} x_{12}^2 \rangle$&24&$\mathbb{Z}_3$&$\mathbb{Z}_3$\\
            \hline
            4&2&$(18;6,6,6)$&$\langle x_6^1, x_{13}^1 \mid (x_6^1)^{-1} (x_{13}^1)^2 (x_6^1) x_{13}^1 (x_6^1)^{-2} (x_{13}^1)^{-2} x_6^1 (x_{13}^1)^{-1} (x_6^1)^{-1} x_{13}^1, $&&$\mathbb{Z}_3$&$\mathbb{Z}_3$\\
            &&& $x_6^1 (x_{13}^1)^{-1} (x_6^1)^{-1}((x_6^1)^{-1} x_{13}^1)^2 x_{13}^1 (x_6^1)^{-1} (x_{13}^1)^{-1}((x_{13}^1)^{-1} x_6^1)^2 \rangle$  &&&\\
            \hline
            4&3&$(18;6,6,6)$&$\langle x_6^1, x_{12}^1, x_{13}^1 \mid ((x_6^1)^{-1}(x_{13}^1)^{-1} x_6^1 (x_{12}^1)^{-1})^2 (x_6^1)^{-1} (x_{13}^1)^2, $&&$\mathbb{Z}_3$&$\mathbb{Z}_3$\\
            &&&$(x_{13}^1)^{-1} x_{12}^1 (x_6^1)^{-1} (x_{13}^1)^3 x_6^1 (x_{12}^1)^{-2} (x_6^1)^{-1} x_{13}^1 x_6^1 (x_{12}^1)^3 (x_6^1)^{-1} (x_{13}^1)^{-2} x_6^1,$ &&&\\
            &&&$ x_{12}^1 (x_{12}^1 (x_6^1)^{-1})^2 x_{13}^1 (x_6^1)^2 (x_{12}^1)^{-2}  $&&&\\
            &&&  $\cdot(x_6^1)^{-1} (x_{13}^1) x_6^1 (x_{13}^1)^{-3} x_6^1 (x_{12}^1)^{-2} (x_6^1)^{-1} (x_{13}^1)^2 (x_{12}^1) (x_6^1)^{-1} x_{13}^1 x_6^1 x_{12}^1 \rangle$   &&&\\
            \midrule
            \midrule
            5&1&$(24;8,8,8)$&$ \langle x_{12}^1, x_{12}^2 \mid (x_{12}^1)^{-1} (x_{12}^2)^{-1} x_{12}^1 x_{12}^2 x_{12}^1 (x_{12}^2)^{-1},$&120&1&1\\
            &&& $x_{12}^1 (x_{12}^2)^{-1} x_{12}^1 
            x_{12}^2 (x_{12}^1)^{-2} x_{12}^2 \rangle$ &&&\\
            \hline
            5&2&$(24;8,8,8)$&$\langle x_{12}^3, x_6^4 \mid (x_{12}^3 x_6^4)^2 x_6^4 (x_{12}^3)^{-1} x_6^4 (x_{12}^3)^2 (x_6^4)^{-1}((x_6^4)^{-1} (x_{12}^3)^{-1})^2$&&1&1\\
            &&& $\cdot (x_{12}^3)^{-1} (x_6^4)^{-1} x_{12}^3, $ &&&\\
            &&& $((x_{12}^3)^{-1} (x_6^4)^{-1})^2 x_{12}^3 x_6^4 (x_{12}^3)^2 $ &&&\\
            &&& $\cdot(x_6^4)^{-1}((x_6^4)^{-1} (x_{12}^3)^{-1})^2 x_6^4 x_{12}^3 (x_6^4)^2 (x_{12}^3)^{-1} \rangle$  &&&\\
            \midrule
            \midrule
            6&1&$(30;10,10,10)$&$\langle x_{12}^3, x_{12}^4 \mid (x_{12}^3)^{-1} x_{12}^4 (x_{12}^3)^2 x_{12}^4 (x_{12}^3)^{-1} (x_{12}^4)^{-2}, $&$\infty$&$\mathbb{Z}^2$&$\mathbb{Z}^4$ \\
            &&& $x_{12}^3 (x_{12}^4)^{-1} (x_{12}^3)^{-1} x_{12}^4 (x_{12}^3)^{-1} (x_{12}^4)^{-1} x_{12}^3 x_{12}^4 \rangle$ &&&\\
            \bottomrule
        \end{tabular}
    \end{center}
    \caption{
        Invariants of the $n$-fold cyclic cover of the Suciu knot $R_k$.
        \label{suciu.tab}
    }
\end{table}

In \Cref{suciu.tab}, we compute invariants of the $n$-fold cyclic cover $\Sigma_n(R_k)$. \footnote{Details of this computation can be found in \texttt{suciu\_group\_trisection.ipynb} of \cite{Ruppik_Knot_groups_2021}.} When $n=2$, our computations recover the results of Kanenobu-Sumi \cite{kanenobu2020suciu}, who observe that for $k = 1, 2, 3$ the groups $\pi_{1}(\Sigma_{2}(R_{k}))$ are finite:
$\pi_{1}(\Sigma_{2}(R_{1})) \cong \ZZ / 3$ is cyclic of order $3$,
$\pi_{1}(\Sigma_{2}(R_{2}))$ is the binary tetrahedral group of order $24$ and
$\pi_{1}(\Sigma_{2}(R_{3}))$ is the direct product $\ZZ/3 \times \operatorname{Ico}^{*}$ of a cyclic group with the binary icosahedral group $\operatorname{Ico}^{*}$ which has order $120$.
We do not know whether $\pi_{1}(\Sigma_{2}(R_{k}))$ is finite or infinite for $k \ge 4$.  

\section{Bisections of Ribbon Surfaces and Application to Computing Ribbon Obstructions}
\label{xi.sec}

\subsection{Singular tri-plane diagrams and bisections of ribbon surfaces in the 4-ball}

Singular tri-plane diagrams, introduced in \cite{cahn2017singular}, are a generalization of tri-plane diagrams where one relaxes the condition that the one or more pairwise boundary unions of the trivial tangles are unlinks.  Depending on the application, one can either obtain a singular surface from such a diagram by coning off the entire link in a sector, or if the link is split, by coning off each individual split component.
In this section, we consider singular tri-plane diagrams $(A,B,C)$ such that $A\cup\overline{B}$ is a knot $K$, and $B\cup \overline{C}$ and $C\cup \overline{A}$ are unlinks.
The corresponding bridge-trisected surface $F$ in $\sphere{4}$ has one singularity, the cone on $K$.  By removing the sector containing the cone point, one obtains a {\it bisection} of $\disk{4}$ and a properly embedded, {\it bridge-bisected}, ribbon surface $F'$ with boundary $K$.
Moreover, it follows from the arguments in \cite{meier2017bridgeS4} that any ribbon surface in $\disk{4}$ can be bisected, and represented by a tri-plane diagram.

Now suppose we have a coloring $\rho: \pi_1(\disk{4}-F')\rightarrow S_n$, factored as a colored bridge bisection, and described by permutation labelling of the tri-plane diagram $(A,B,C)$, analogous to \Cref{coloredgptrisection.def} and \Cref{colored_tri_plane.def}.
Taking branched covers, we obtain a bisection of a 4-manifold $X'$ with boundary $M_\rho$, the branched cover of $\sphere{3}$ along $K$ corresponding to $\rho$.
When $M_\rho$ is homeomorphic to $\sphere{3}$, we can cone off the boundary of $F'$ to recover the bridge-trisected singular surface $F\subset \sphere{4}$, and also cone off $M_\rho$ upstairs, to obtain a trisection of $X$ with a 4-ball in one sector.
We can then use our algorithm to compute the corresponding group trisection, homology, and intersection form of $X$, from which we can recover the group bisection, homology, and intersection form of $X'$.
In particular, $\sigma(X)=\sigma(X')$, and we use this to compute Kjuchukova's ribbon obstruction for knots with $p$-fold dihedral cover $\sphere{3}$.  

More generally, when $\partial X'=M_\rho$ is an integer homology sphere, we have $H_2(X',\partial X')\simeq H_2(X')$. Thus \Cref{homology.alg} and \Cref{intersectionform.alg} can be used to calculate the second homology and intersection form of the branched cover of $\disk{4}$ along a such a ribbon surface $F'$.  

\subsection{Computing ribbon obstructions}

Kjuchukova \cite{kjuchukova2018dihedral} defined an invariant $\Xi_p(K,\rho)$ of a knot together with a choice of Fox $p$-coloring $\rho$ of $K$.
For each $p$, some value of $\Xi_p(K,\rho)$ (taken over all $p$-colorings $\rho$ of $K$) must fall in a bounded range when $K$ is homotopy-ribbon, meaning that $\Xi_p$ can be used to test potential counter-examples to the Slice-Ribbon Conjecture \cite{cahn2017singular, geske2021signatures}.  As noted above, the algorithms in this paper give, as a byproduct, a new algorithm for computing the invariant $\Xi_p$.

Viro \cite{viro1984signature} gave a formula for the signature of a branched cover of a 4-manifold in terms of the local branching indices and the self-intersection number of the branch set.  It follows from his result that the signature of a branched cover of $\sphere{4}$ with locally flat, orientable branching set must be zero; this is the case for the examples we have considered so far in \Cref{sec:twist_spun,stevedore.sec,sec:suciu}.  

In contrast, the invariant $\Xi_p(K,\rho)$ measures the {\it signature defect}, or the change in the signature of a $p$-fold dihedral branched cover resulting from the presence of a singularity, namely, the cone on the knot $K$, on the branching set.  

Precisely, let $f \colon X \rightarrow \sphere{4}$ be a $p$-fold dihedral cover with branching set $F$ and induced $p$-coloring $\rho$, and assume that $F$ is locally flat aside from one singular point $z$, modeled on the cone on the knot $K$.
Let $X'$ be obtained from $X$ by deleting a small 4-ball neighborhood of $z$. Then

\[
    \Xi_p(K,\rho) = \dfrac{p-1}{4}e(F) - \sigma(X',\partial X'),
\]

where $e(F)$ denotes the self-intersection number of $F$ and $\sigma(X',\partial X')$ denotes the Novikov signature of a manifold with boundary \cite{geske2021signatures}.
When $X$ is a manifold (equivalently, when the $p$-fold dihedral cover of $K$ is $\sphere{3}$) 

\[
    \Xi_p(K,\rho)
    =
    \dfrac{p-1}{4}e(F) - \sigma(X),
\]
where $\sigma(X)$ is the ordinary signature of $X$ \cite{kjuchukova2018dihedral}.
Therefore, given a representation of $F$ as a {\it singular} tri-plane diagram, one can determine the value of $\Xi_p$ using the algorithm for computing $\sigma(X)$ in this paper when $e(F)=0$ (e.g., when $F$ is orientable, which is the case in our examples).
An algorithm for computing $e(F)$ for a smooth surface from a tri-plane diagram is given in \cite{joseph2021bridgetrisectionclassical}; adapting this for surfaces with singularities would allow one to compute $\Xi_p$ in the general case, using the techniques of this paper to compute the signature term.

Note that we write $\Xi_p(K)$ instead of $\Xi_p(K,\rho)$ when $K$ has a single $p$-coloring up to equivalence. 

We construct a 2-parameter family of knots $K_{s,t}$ such that $K_{s,t}$ is $p$-colorable whenever $p$ divides $t$, and show that for small values of $s$ and $p$, $\Xi_p(K_{s,p})=\frac{s(p-1)}{2}$ in \Cref{tab:xi}. \footnote{Details of this computation can be found in \texttt{singular\_surface\_family.ipynb} of \cite{Ruppik_Knot_groups_2021}.}
This is the first explicit computation of the $\Xi_p$ invariant for $p>3$.
We note that the algorithm in this paper can only be used to compute $\Xi_p(K,\rho)$ when one has an explicit surface in $\disk{4}$ bounding $K$ over which $\rho$ extends; however, when one has such a surface, this algorithm is easier to use than the algorithm in \cite{cahn2018computing}, as it is fully automated.

\begin{figure}[htbp]
    \centering
    \includegraphics[width=6in]{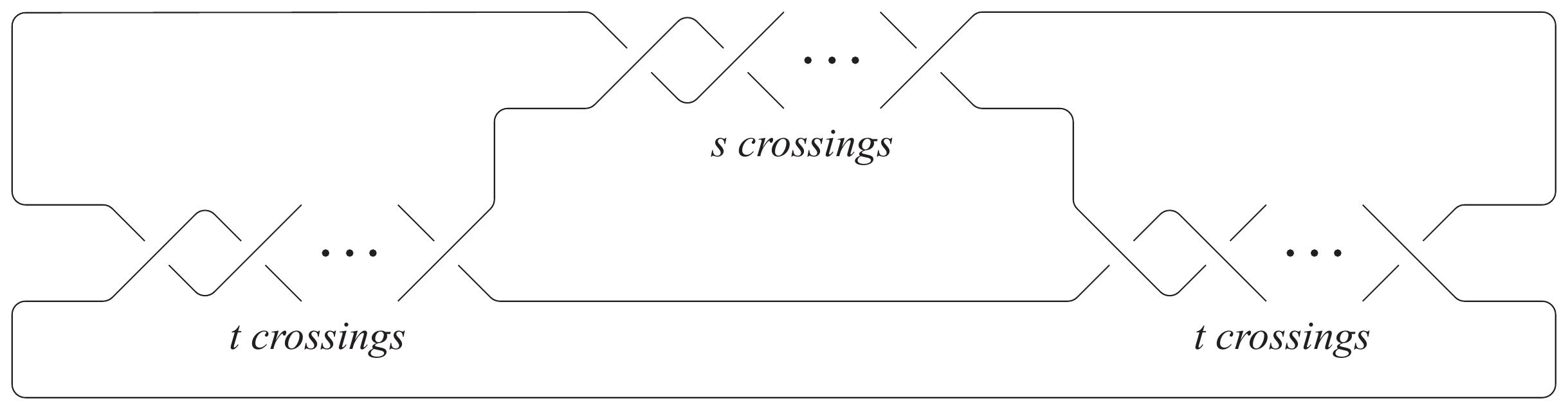}
    \caption{
        \label{singularityfamily.fig}
        A two-parameter family of knots $K_{s,t}$, with $s,t$ odd, such that $\Xi_p(K_{s,p})=\frac{s(p-1)}{2}$.
    }   
\end{figure}

\begin{figure}[htbp]
    \centering
    \includegraphics[width=7in]{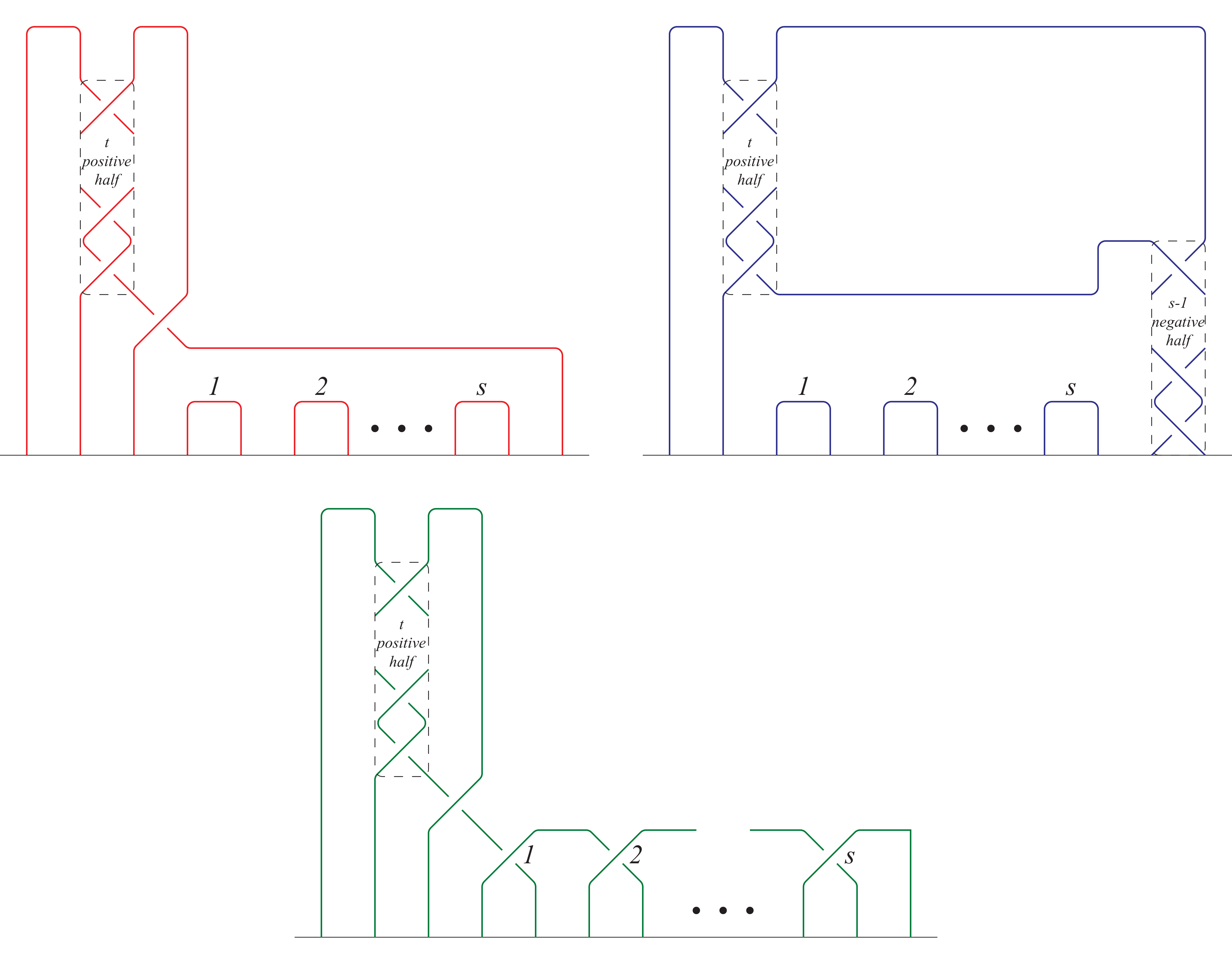}
    \caption{
        \label{singular_tri_plane.fig}
        A two-parameter family tri-plane diagrams $(A_{s,t},B_{s,t},C_{s,t})$ for a singular, orientable surface of genus $\frac{s-1}{2}$, with the property that $A_{s,t} \cup \overline{B}_{s,t}$ is the mirror image of the knot $K_{s,t}$, and $B_{s,t} \cup \overline{C}_{s,t}$ and $C_{s,t} \cup \overline{A}_{s,t}$ are 2-component unlinks.
    }
\end{figure}

A knot $K$ is strongly $p$-admissible if it has a $p$-coloring $\rho$ which extends over a surface in the 4-ball, and if the branched cover of $\sphere{3}$ along $K$ determined by $\rho$ is homeomorphic to $\sphere{3}$.
We note that when $p=3$, the knots $K_{s,3}$ were shown in \cite{cahn2020dihedral} to achieve the full range of values of $\Xi_3$ on strongly 3-admissible knots, namely the set of odd integers.
We wonder whether this is the case for other $p$.

\begin{question} Suppose that $K$ is a strongly $p$-admissible knot with respect to the coloring $\rho$.  Must $\Xi_p(K,\rho)$ be an odd multiple of $\frac{p-1}{2}$?
\end{question}

\begin{table}[htbp]
    \centering
    \renewcommand{\arraystretch}{1.2}
    \begin{tabular}{c|c|c|c|c|c|c|c|c}
        \toprule
        $s$ & $p$ & Trisection & $\pi_1$ & $\operatorname{rk} H_2$ & Intersection & Parity & $\sigma$ & $\Xi_p(K_{s,p})$ \\
        &&Parameters&&&Form&&&\\
        \midrule
        \midrule
        1&3&$(1;0,0,0)$&1&1&$\begin{pmatrix}-1\end{pmatrix}$&odd&-1&1\\
        \hline
        3&3&$(3;0,0,0)$&1&3&
        $\begin{pmatrix}
            -6&-11&-3\\-11&-21&-6\\-3&-6&-2
        \end{pmatrix}$
        &odd&-3&3\\
        \hline
        5&3&$(5;0,0,0)$&1&5&
        $\begin{pmatrix}
            -86 &-171 & -43&  -22&  -12\\
            -171& -341&  -86 & -44&  -24\\
            -43  &-86 & -22 & -11 &  -6\\
            -22 & -44 & -11 &  -6 &  -3\\
            -12 & -24  & -6  & -3 &  -2\\
        \end{pmatrix}$ & odd & -5 & 5 \\
        \midrule
        \midrule
        1&5&$(2;0,0,0)$&1&2&$\bigoplus_2 \begin{pmatrix}-1
        \end{pmatrix}$&odd&-2&2\\
        \hline
        3&5&$(6;0,0,0)$&1&6&
        $\bigoplus_2\begin{pmatrix}
              -6 &-11 & -3 \\
        -11 & -21 & -6 \\
         -3 & -6 & -2 \\
        \end{pmatrix}$&odd&-6&6\\
        \hline
        5&5&$(10;0,0,0)$&1&10&$\bigoplus_2\begin{pmatrix}
             -86& -171 & -43 & -22 & -12  \\ 
        -171& -341 & -86 & -44& -24   \\
         -43 & -86 & -22 & -11 &  -6  \\ 
         -22  &-44 & -11 &  -6 &  -3   \\
         -12  &-24 &  -6 &  -3 &  -2    
        \end{pmatrix}$&odd&-10&10\\
        \midrule
        \midrule
        1&7&$(3;0,0,0)$&1&3&$\bigoplus_3\begin{pmatrix}
            -1
        \end{pmatrix}$&odd&-3&3\\
        \hline
        3&7&$(9;0,0,0)$&1&9&$\bigoplus_3\begin{pmatrix}
        -6& -11&  -3  \\
        -11 &-21 & -6  \\
         -3 & -6 & -2   \\
        \end{pmatrix}$&odd&-9&9\\
        \hline
        5&7&$(15;0,0,0)$&1&15&$\bigoplus_3\begin{pmatrix}
             -86& -171 & -43 & -22 & -12  \\ 
        -171& -341 & -86 & -44& -24   \\
         -43 & -86 & -22 & -11 &  -6  \\ 
         -22  &-44 & -11 &  -6 &  -3   \\
         -12  &-24 &  -6 &  -3 &  -2    
        \end{pmatrix}$&odd&-15&15\\
        \midrule
        \midrule
        1&9&$(4;0,0,0)$&1&4&$\bigoplus_4\begin{pmatrix}
            -1
        \end{pmatrix}$&odd&-4&4\\
        \hline
        3&9&$(12;0,0,0)$&1&12&$\bigoplus_4\begin{pmatrix}
        -6& -11&  -3  \\
        -11 &-21 & -6  \\
         -3 & -6 & -2   \\
        \end{pmatrix}$&odd&-12&12\\
        \hline
        5&9&$(20;0,0,0)$&1&20&$\bigoplus_4\begin{pmatrix}
             -86& -171 & -43 & -22 & -12  \\ 
        -171& -341 & -86 & -44& -24   \\
         -43 & -86 & -22 & -11 &  -6  \\ 
         -22  &-44 & -11 &  -6 &  -3   \\
         -12  &-24 &  -6 &  -3 &  -2    
        \end{pmatrix}$&odd &-20&20\\
        \bottomrule
    \end{tabular}
    \caption{
        Invariants of $p$-fold irregular dihedral branched covers of $\sphere{4}$, whose branching sets have one cone singularity on the knot $K_{s,p}$.
        \label{tab:xi}
    }
\end{table}

\newpage
\printbibliography

\end{document}